\numberwithin{equation}{section}
\newtheorem{thm}{Theorem}[section]
\newtheorem{lem}[thm]{Lemma}
\newtheorem{cor}[thm]{Corollary}
\newtheorem{prop}[thm]{Proposition}
\newtheorem{rem}[thm]{Remark}
\newcommand{\R}{\mathbb{R}}
\newcommand{\N}{\mathbb{N}}
\newcommand{\sfera}{\mathbb{S}}
\begin{document}

\title[Unique
  continuation from the edge of a crack]{Unique
  continuation from the edge of a crack}
\author{Alessandra De Luca and Veronica Felli}

\address{
\hbox{\parbox{5.7in}{\medskip\noindent
  A. De Luca and V. Felli\\
Dipartimento di Matematica e Applicazioni,
 Universit\`a di Milano - Bicocca, 
Via Cozzi 55, \\
20125 Milano (Italy). 
         {\em{E-mail addresses: }}{\tt a.deluca41@campus.unimib.it, veronica.felli@unimib.it.}}}}

\date{Revised version, June 2020}

\begin{abstract}
   In this work we develop an Almgren type monotonicity formula
  for a class of elliptic equations in a domain with a crack, in the
  presence of potentials satisfying either a negligibility condition
  with respect to the inverse-square weight or some suitable
  integrability properties.  The study of the Almgren frequency
  function around a point on the edge of the crack, where the domain
  is highly non-smooth, requires the use of an approximation argument,
  based on the construction of a sequence of regular sets which
  approximate the cracked domain.  Once a finite limit of the Almgren
  frequency is shown to exist, a blow-up analysis for scaled solutions
  allows us to prove asymptotic expansions and strong unique
  continuation from the edge of the crack.
       \end{abstract}
	
	\maketitle
	
\noindent{\bf Keywords.} 
 Crack singularities; monotonicity formula; unique continuation;
blow-up analysis.

\medskip \noindent {\bf MSC2020 classification.} 
35J15, 
35C20, 
74A45. 

\section{Introduction and statement of the main results}\label{sec:intr}
This paper presents a monotonicity approach to the study of the asymptotic behavior and unique continuation from the edge of a crack for solutions to the following class of elliptic equations 
\begin{equation}\label{eqgenerale}
\left\{\begin{aligned}
-\Delta u(x)&=f(x)u(x) &&\text{in $\Omega\setminus \Gamma$}, \\
 u&=0 && \text{on}\ \Gamma,
\end{aligned}\right. 
\end{equation}
where $\Omega\subset \R^{N+1}$ is a bounded open domain,
$\Gamma\subset \R^N$ is a closed set, $N\geq 2$, and the 
potential $f$ satisfies  either a negligibility condition with respect to
the inverse-square weight, see assumptions \eqref{xi0}-\eqref{xi}, or   some suitable integrability properties, see assumptions \eqref{eta0}-\eqref{eta} below. 

We recall that the \emph{strong unique continuation property} is said
to hold
for a certain class of equations if no solution, besides possibly the zero function, has a zero of infinite order.
Unique continuation principles for solutions to second
order elliptic equations have been largely studied in the literature
since the pioneering 
contribution by Carleman \cite{Carleman}, who derived unique
continuation from some weighted a priori inequalities. 
Garofalo and Lin in \cite{Garofalo} 
studied  unique
continuation for  elliptic equations with variable coefficients
introducing  an approach based on the validity of doubling conditions, 
which in turn depend on the monotonicity property of the Almgren type
frequency function, defined as the ratio 
 of scaled local energy over mass of the solution near a fixed point,
 see \cite{almgren}.

Once a strong unique continuation property is established and 
infinite
vanishing order for non-trivial solutions is excluded, the problem of estimating
and possibly  classifying all 
admissible vanishing rates naturally arises. 
For quantitative uniqueness and 
bounds for the maximal order of vanishing
obtained by monotonicity methods  we cite e.g. \cite{Kukavica};  
furthermore, a precise description of the asymptotic behavior together with a 
 classification of possible vanishing orders of solutions was
obtained for several classes of problems in \cite{Almgren-type,
  Semilinear, Asymptotic, MilanJ, Lemmini}, by combining monotonicity
methods with blow-up analysis for scaled solutions. 

The problem of unique continuation from boundary points presents
peculiar additional difficulties, as the derivation of monotonicity formulas is
made more delicate by the interference with the geometry of the
domain. Moreover the possible vanishing orders of solutions are
affected by the regularity of the boundary; e.g. in
\cite{Almgren-type} the asymptotic behavior at conical singularities
of the boundary has been shown to depend of the opening of the vertex.
We cite \cite{Adolf-Escau, Adolf-Kenig, Almgren-type,
  Kukavica-Nystrom, Tao} for unique continuation from the boundary for
elliptic equations under homogeneous Dirichlet conditions. We also
refer to \cite{TZ05} for unique continuation and doubling
properties at the boundary under zero Neumann conditions
and to \cite{DFV} for a strong unique continuation result from the
vertex of a cone under non-homogeneous Neumann conditions.

The aforementioned papers concerning unique continuation from the
boundary  require the domain to
be at least of Dini type. With the aim of relaxing this kind of
regularity assumptions, the present paper
investigates unique continuation and  classification of the possible vanishing
orders 
of solutions at edge points of cracks breaking the domain, which are
then highly irregular points of the boundary.

Elliptic problems in domains with cracks arise in elasticity theory, 
see e.g. \cite{DalMaso, KLH, Lazzaroni}. 
  The high  non-smoothness of
 domains with slits produces strong singularities of solutions to elliptic
problems at edges of cracks; the structure of such singularities 
 has been widely studied in the
literature, see e.g. \cite{CD, Costabel,DW} and references therein.
 In particular, asymptotic expansions of solutions at
 edges play a crucial role in crack problems, since  the coefficients
 of such expansions are related to the so called \textit{stress
   intensity factor}, see   e.g. \cite{DalMaso}. 

 A further reason of interest in the study of problem
 \eqref{eqgenerale} can be found in its relation with mixed
 Dirichlet/Neumann boundary value problems. Indeed, if we consider an
 elliptic equation associated, on  a flat portion of the boundary
 $\Lambda=\Lambda_1\cup \Lambda_2$, to 
a homogeneous Dirichlet boundary condition on $\Lambda_1$ and a homogeneous Neumann condition on
 $\Lambda_2$, an even reflection through the flat boundary $\Lambda$ leads to
 an elliptic equation satisfied in the complement of the Dirichlet
 region, which then plays the role of a crack, see Figure \ref{fig:mixed}; the edge of the crack
 corresponds to the Dirichlet-Neumann junction of the original
 problem. In \cite{Fall} unique continuation and asymptotic expansions
 of solutions for planar mixed boundary value problems at
 Dirichlet-Neumann junctions were obtained via monotonicity methods;
 the present paper is in part motivated by the aim of extending to
 higher dimensions the monotonicity formula obtained in \cite{Fall}
 in the 2-dimensional case, together with its applications to unique continuation.  For some regularity
 results for second-order elliptic problems with mixed
 Dirichlet-Neumann type boundary conditions we refer to \cite{Kassman,
   Savare} and references therein.

\begin{figure}[b]
  \centering
  \begin{subfigure}[nooneline]{0.3\linewidth}
  \begin{tikzpicture}[scale=0.4]
 \draw[fill=black, fill opacity=0.2,rounded corners=1.5ex]
(4,0)  -- (3.4,0) -- (2,0.1) -- 
(0.2,1)  --
(0,3)  --
(3,6)  --
(5,4)  --
(8,3)  --
(9,2)  --
(8.8,0) -- (4,0); 
 \draw[color=blue,line width=1pt](3.2,0) -- (6,0);
  \draw[color=blue] (4.4,0.4) node {\tiny Neumann};
  \draw[color=blue] (4.4,-0.4) node {\tiny $\Lambda_2$};
  \draw[color=red,line width=1pt](6,0) -- (8.4,0);
 \draw[color=red] (7.2,0.4) node {\tiny Dirichlet};
 \draw[color=red] (7.2,-0.4) node {\tiny $\Lambda_2$};
\end{tikzpicture}
\vskip2.1cm
 \caption{Mixed Dirichlet/Neumann boundary conditions on
 a flat portion of the boundary}
  \end{subfigure}
  \qquad\begin{subfigure}[nooneline]{0.3\linewidth}
  \begin{tikzpicture}[scale=0.4]
 \draw[fill=black, fill opacity=0.2,rounded corners=1.5ex]
(3.4,0) -- (2,0.1) -- 
(0.2,1)  --
(0,3)  --
(3,6)  --
(5,4)  --
(8,3)  --
(9,2)  --
(8.8,0) -- (7,0);
 \draw[fill=black, fill opacity=0.2,rounded corners=1.5ex]
(3.4,0) -- (2,-0.1) -- 
(0.2,-1)  --
(0,-3)  --
(3,-6)  --
(5,-4)  --
(8,-3)  --
(9,-2)  --
(8.8,0) -- (7,0); 
  \draw[color=red,line width=1pt](6,0) -- (8.4,0);
 \draw[color=red] (7.2,0.4) node {\tiny Crack};
\end{tikzpicture}
 \caption{After an even reflection the Dirichlet region becomes a crack}
\end{subfigure}
  \caption{A motivation from mixed Dirichlet/Neumann boundary value problems}
  \label{fig:mixed}
\end{figure}
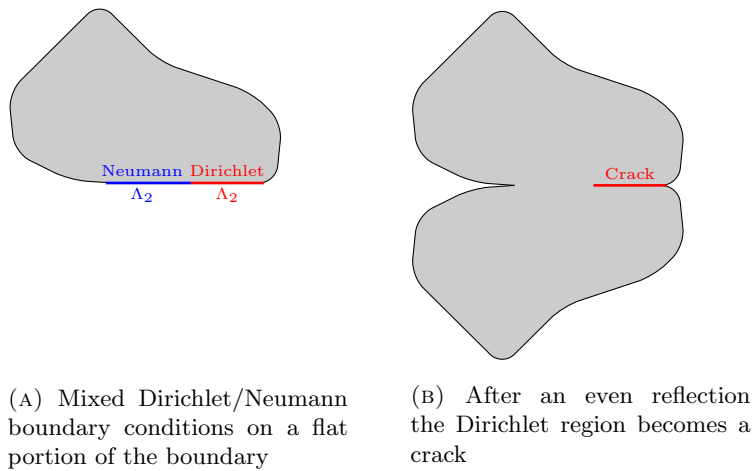

 In the generalization of the Almgren type monotonicity formula of
 \cite{Fall} to dimensions greater than 2, some new additional
 difficulties arise, besides the highly non-smoothness of the domain:
 the positive dimension of the edge, a stronger interference with the
 geometry of the domain, and some further technical issues, related
 e.g. to the lack of conformal transformations straightening the edge.
In particular,  the proof of the monotonicity formula is based on the
differentiation of the Almgren quotient defined in
\eqref{frequency}, which in turn requires a 
Pohozaev  type identity formally obtained by testing  the equation with the function
$\nabla u\cdot x$; however  our  domain with crack  doesn't
verify the  exterior ball
condition (which ensures $L^2$-integrability of second order
derivatives, see \cite{Adolf}) and  $\nabla u\cdot x$ could be not
sufficiently regular to be an admissible
test function.

 In this article a new technique, based on an
  approximation argument, is developed to overcome the aforementioned difficulty:
  we construct first a sequence of domains  which approximate
  $\Omega\setminus\Gamma$, satisfying the exterior ball condition and
  being star-shaped with respect to the origin, and then a sequence of
  solutions of an approximating problem on such domains, converging to
  the solution of the original problem with crack. For the
  approximating problems enough regularity is available to establish a
Pohozaev  type identity, with some remainder terms due to interference
with the boundary, whose sign can nevertheless be recognized thanks to
  star-shapeness conditions. Then, passing to the limit in Pohozaev  identities
for the approximating problems, we obtain inequality \eqref{pohoz},
which is enough to estimate from below the derivative of the Almgren
quotient and to prove that such quotient has a finite limit at
$0$ (Lemma \ref{Lemlimesiste}).  Once a finite  limit of the Almgren
frequency is shown to exist, a blow-up analysis for scaled solutions
allows us to prove strong unique continuation and asymptotics of
solutions. 

In order to state the main results of the present paper, we start by
introducing our assumptions on the domain. For $N\geq 2$, we consider the set 
\[
\Gamma=\{(x',x_N)=(x_1,\dots,x_{N-1},x_N)\in \mathbb{R}^N:\, x_N\geq g(x')\},
\]
where $g\colon\mathbb{R}^{N-1}\to \mathbb{R}$ is a function such that 
\begin{equation}\label{g0=0}
g(0)=0, \quad  \nabla g(0)=0,
\end{equation}
\begin{equation}\label{gC2}
g\in C^2(\mathbb{R}^{N-1}).
\end{equation}
Let us observe that assumption \eqref{g0=0} is not a restriction but
just a selection of our coordinate system and, from \eqref{g0=0} and
\eqref{gC2}, it follows that
\begin{equation}\label{gO2}
|g(x')|=O(|x'|^2)\quad \text{as $|x'|\rightarrow 0^+$}.
\end{equation}
Moreover we assume that 
\begin{equation}\label{nablaggeq0}
g(x')-x'\cdot\nabla g(x')\geq 0
\end{equation}
for any $x'\in B'_{\hat{R}}:=\{x'\in \mathbb{R}^{N-1}:|x'|<\hat{R}\}$, for
some $\hat{R}>0$. This condition says that
$\overline{\mathbb{R}^N\setminus\Gamma}$ is star-shaped with respect
to the origin in a neighbourood of 0. It is satisfied for instance if
the function $g$ is concave in a neighborhood of the origin.

We are interested in studying the following boundary value problem
\begin{equation}\label{eq:CPu}
\left\{\begin{aligned}
-\Delta u&=f\,u &&\text{in}\ B_{\hat{R}}\setminus\Gamma, \\
u&=0 && \text{on}\ \Gamma,
\end{aligned}\right. 
\end{equation}
where $B_{\hat{R}}=\{x\in \R^{N+1}:|x|<\hat R\}$, for some function $f\colon B_{\hat{R}}\to \mathbb{R}$ such that $f$ is measurable and bounded in $B_{\hat{R}}\setminus B_\delta$ for every $\delta\in (0, \hat{R})$. We consider two alternative sets of assumptions: we assume either that 
\begin{equation}\label{xi0} \tag{H1-1}
\lim _{r\rightarrow 0^+}\xi_f(r)=0,
\end{equation}
\begin{equation}\label{xiL1}\tag{H1-2}
\frac{\xi_f(r)}{r}\in L^1(0,\hat{R}),\quad \frac{1}{r}\int_0^r\frac{\xi_f(s)}{s}\,ds\in L^1(0,\hat{R}),
\end{equation}
where the function $\xi_f$ is defined as
\begin{equation}\label{xi}\tag{H1-3}
\xi_f(r):=\sup_{x\in \overline{B_r}}|x|^2|f(x)|\quad \text{for any $r\in (0,\hat{R})$},
\end{equation}
or that 
\begin{equation}\label{eta0}\tag{H2-1}
\lim _{r\rightarrow 0^+}\eta(r,f)=0,
\end{equation}
\begin{equation}\label{etaL1}\tag{H2-2}
\frac{\eta(r,f)}{r}\in L^1(0,\hat{R}),\quad \frac{1}{r}\int_0^r\frac{\eta(s,f)}{s}\,ds\in L^1(0,\hat{R}),
\end{equation}
and 
\begin{equation}\label{etanablaf}\tag{H2-3}
\nabla f\in L^\infty_{\mathrm{loc}}(B_{\hat{R}}\setminus \{0\}),
\end{equation} 
\begin{equation}\label{etanablafL1}\tag{H2-4}
\frac{\eta(r,\nabla f\cdot x)}{r}\in L^1(0,\hat{R}),\quad \frac{1}{r}\int_0^r\frac{\eta(s,\nabla f \cdot x)}{s}\,ds\in L^1(0,\hat{R}),
\end{equation}
where 
\begin{equation}\label{eta}\tag{H2-5}
\eta(r,h)=\sup_{u\in H^1(B_r)\setminus \{0\}}\frac{\int_{B_r}|h|u^2\,dx}{\int_{B_r}|\nabla u|^2\,dx+\frac{N-1}{2r}\int_{\partial B_r}|u|^2\, dS},
\end{equation}
for every $r\in (0,\hat{R}),\,h\in L^\infty_{\mathrm{loc}}(B_{\hat{R}}\setminus \{0\})$.

Conditions \eqref{xi0}-\eqref{xi} are satisfied e.g. if
$|f(x)|=O(|x|^{-2+\delta})$ as $|x|\rightarrow 0$ for some $\delta>0$,
whereas assumptions \eqref{eta0}-\eqref{eta} hold e.g. if
$f\in W^{1, \infty}_{\mathrm{loc}}(B_{\hat{R}}\setminus\{0\})$ and
$f,\nabla f\in L^p(B_{\hat{R}})$ for some $p>\frac{N+1}{2}$.
 We also observe that condition \eqref{eta0} is satisfied
  if $f$ belongs to the Kato class $K_{n+1}$, see \cite{FGL}.

In order to give a weak formulation of problem \eqref{eq:CPu}, we introduce the space $H^1_{\Gamma}(B_R)$ for every $R>0$, defined as the closure in $H^1(B_R)$ of the subspace 
\[
C^\infty_{0,\Gamma}(\overline{B_R}):=\{u\in
C^\infty(\overline{B_R}):u=0\ \text{in a neighborhood of}\ \Gamma\}.
\]
We observe that actually 
\begin{equation*}
H^1_\Gamma(B_R)=\{u\in H^1(B_R):\, \tau_\Gamma(u)=0\},
\end{equation*}
where $\tau_{\Gamma}$ denotes the trace operator on $\Gamma$, as one
can easily deduce from \cite{Bern}, taking into account that the capacity of $\partial\Gamma$ in $\mathbb{R}^{N+1}$ is zero, since $\partial\Gamma$ is contained in a 2-codimensional manifold. 

Hence we say that $u \in H^1(B_{\hat{R}})$ is a weak solution to \eqref{eq:CPu} if
\begin{equation*}\label{eq:CPudeb}
\left\{\begin{aligned}
&u\in H^1_\Gamma(B_{\hat{R}}), \\
&\int_{B_{\hat{R}}}\nabla u(x)\cdot\nabla v(x)\,dx-\int_{B_{\hat{R}}}f(x)u(x)v(x)\,dx=0\quad\text{for any}\ v\in C_c^\infty(B_{\hat{R}}\setminus\Gamma).
\end{aligned}\right. 
\end{equation*}
 In the classification of the possible vanishing orders and
blow-up profiles  of
solutions, the following
eigenvalue problem on the unit $N$-dimensional sphere with a
half-equator cut plays a crucial role. 
Letting
$\mathbb{S}^N=\{(x',x_N,x_{N+1}):|x'|^2+x_N^2+x_{N+1}^2=1\}$ be the unit
$N$-dimensional sphere and $\Sigma=\{(x',x_N,x_{N+1})\in
\mathbb{S}^N:x_{N+1}=0\text{ and }x_N\geq0\}$, we consider the
eigenvalue problem 
\begin{equation}\label{eq:1}
\left\{\begin{aligned}
-\Delta_{\sfera^N}\psi&=\mu\,\psi&&\text{on }\sfera^N\setminus\Sigma,\\
\psi&=0&&\text{on }\Sigma.
  \end{aligned}\right.
\end{equation}
We say that $\mu\in\R$ is an eigenvalue of  \eqref{eq:1} if
there exists an eigenfunction $\psi\in
H^1_0(\sfera^N\setminus\Sigma)$, $\psi\not\equiv0$, such that  
\[
\int_{\sfera^N}\nabla_{\sfera^N}\psi\cdot \nabla_{\sfera^N}\phi\,dS=\mu
\int_{\sfera^N}\psi\phi\,dS
\]
for all $\phi\in H^1_0(\sfera^N\setminus\Sigma)$. By classical spectral
theory, \eqref{eq:1} admits   a diverging sequence of real eigenvalues
with finite multiplicity $\{\mu_k\}_{k\geq1}$; moreover these
eigenvalues are
explicitly  given by the formula
\begin{equation}\label{eq:2}
\mu_k=\frac{k(k+2N-2)}4,\quad k\in \mathbb{N}\setminus\{0\},
\end{equation}
see Appendix \ref{sec:app_A}.
 For all $k\in \mathbb{N}\setminus\{0\}$, let $M_k\in\N\setminus\{0\}$ be the
multiplicity of the eigenvalue $\mu_k$ and 
\begin{equation}\label{eq:32}
  \begin{split}
    \{Y_{k ,m}\}_{m=1,2,\dots ,M_{k}}\text{ be a
      $L^{2}(\mathbb{S}^{N})$-orthonormal basis}\\\text{ of the eigenspace of
      \eqref{eq:1} associated to $\mu_{k}$}.
  \end{split}
\end{equation}
 In particular $\{Y_{k,m}:k\in
\mathbb{N}\setminus\{0\}, m=1,2,\dots,M_k\}$ is an orthonormal basis
of $L^{2}(\mathbb{S}^{N})$.

The main result of this paper provides an evaluation of the behavior at 0 of weak solutions $u\in H^1(B_{\hat{R}})$ to the boundary value problem \eqref{eq:CPu}.
\begin{thm}\label{teoremagenerale}
  Let $N\geq 2$ and $u\in H^1(B_{\hat{R}})\setminus \{0\}$ be a
  non-trivial weak solution to \eqref{eq:CPu}, with $f$ satisfying
  either assumptions \eqref{xi0}-\eqref{xi} or
  \eqref{eta0}-\eqref{eta}. Then, there exist
  $k_0\in\mathbb{N}$, $k_0\geq 1$, and an eigenfunction  of problem \eqref{eq:1}
associated with the eigenvalue $\mu_{k_0}$
such that
\begin{equation}\label{lambda^gammaesplicito}
  \lambda^{-k_0/2}u(\lambda x)\rightarrow |x|^{k_0/2}\psi(x/|x|)\quad 
  \text{as $\lambda \rightarrow 0^+$}
\end{equation}
in $H^1(B_1)$. 
\end{thm}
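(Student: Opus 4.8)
The plan is to combine an Almgren-type monotonicity formula with a blow-up analysis of rescaled solutions. I would first introduce the height function $H(r)=r^{-N}\int_{\partial B_r}u^2\,dS$, the energy $D(r)=r^{1-N}\bigl(\int_{B_r}|\nabla u|^2\,dx-\int_{B_r}fu^2\,dx\bigr)$, and the Almgren quotient $\mathcal N(r)=D(r)/H(r)$ of \eqref{frequency}. Differentiating $H$ is elementary: the divergence theorem together with the equation gives $H'(r)/H(r)=\frac{2}{r}\mathcal N(r)$, the boundary contribution of $\Gamma$ vanishing since $u=0$ there. Differentiating $D$ is where the Pohozaev-type inequality \eqref{pohoz} enters, and I would obtain it exactly as announced in the introduction: construct smooth domains $A_\varepsilon$, star-shaped with respect to the origin and satisfying the exterior ball condition, with $A_\varepsilon\uparrow B_{\hat R}\setminus\Gamma$ — condition \eqref{nablaggeq0} being essential here — solve the approximating problems $-\Delta u_\varepsilon=fu_\varepsilon$ in $A_\varepsilon$, $u_\varepsilon=0$ on $\partial A_\varepsilon$, show $u_\varepsilon\to u$ strongly in $H^1$, write the now legitimate Pohozaev identity for $u_\varepsilon$ tested with $\nabla u_\varepsilon\cdot x$, observe that the boundary remainders carry a favourable sign by star-shapedness, and let $\varepsilon\to0$. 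Combining the two differentiation formulas yields a lower bound of the form $\mathcal N'(r)\ge -\,(\text{an }L^1\text{ perturbation governed by }\xi_f\text{ or }\eta)$, so that $\mathcal N$ is monotone up to an absolutely continuous correction; by Lemma \ref{Lemlimesiste} the limit $\gamma:=\lim_{r\to0^+}\mathcal N(r)$ exists, finite and positive.

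Next I would run the blow-up. Put $w_\lambda(x)=u(\lambda x)/\sqrt{H(\lambda)}$; the doubling estimates coming from the monotonicity of $\mathcal N$ show that $\{w_\lambda\}_{\lambda\in(0,1)}$ is bounded in $H^1(B_1)$ with $\int_{\partial B_1}w_\lambda^2\,dS=1$. Along some $\lambda_n\to0$ one gets $w_{\lambda_n}\rightharpoonup w$ in $H^1(B_1)$, strongly in $L^2(B_1)$ and in $L^2(\partial B_1)$, so $w\not\equiv0$; moreover $w_\lambda$ vanishes on the rescaled crack $\lambda^{-1}\Gamma\cap B_1$, which by \eqref{gO2} shrinks onto the half-hyperplane $\{x_{N+1}=0,\ x_N\ge0\}$, and $w_\lambda$ solves $-\Delta w_\lambda=\lambda^2 f(\lambda x)\,w_\lambda$ with $\lambda^2 f(\lambda x)$ negligible in the relevant sense, so $w$ is a nontrivial solution of $-\Delta w=0$ in $B_1$ vanishing on that half-hyperplane. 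The scaling identity $\mathcal N_{w_\lambda}(r)=\mathcal N_u(\lambda r)$ combined with the existence of $\gamma$ forces the Almgren quotient of $w$ to be constant $\equiv\gamma$ on $(0,1)$; the vanishing of $\mathcal N_w'$ then gives that $\partial_\nu w$ is proportional to $w$ on every sphere, i.e.\ $w$ is homogeneous of degree $\gamma$. Writing $w(x)=|x|^\gamma\Psi(x/|x|)$ and separating variables, $\Psi\in H^1_0(\mathbb{S}^N\setminus\Sigma)$ solves \eqref{eq:1} with $\mu=\gamma(\gamma+N-1)$; by \eqref{eq:2} this forces $\gamma=k_0/2$ and $\mu=\mu_{k_0}$ for some integer $k_0\ge1$.

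It remains to upgrade to \eqref{lambda^gammaesplicito}. First I would promote the weak convergence to strong $H^1(B_1)$ convergence of $w_{\lambda_n}$ by testing the equations satisfied by $w_{\lambda_n}-w$ with themselves. To remove the normalization $\sqrt{H(\lambda)}$ I would show that $r\mapsto r^{-2\gamma}H(r)$ has a finite and \emph{strictly positive} limit $L$: finiteness follows again from the monotonicity machinery, while positivity is obtained by expanding $u$ in the basis $\{Y_{k,m}\}$ of \eqref{eq:32} and analysing the ODEs satisfied by the Fourier coefficients $\varphi_{k,m}(r)=\int_{\mathbb{S}^N}u(r\theta)Y_{k,m}(\theta)\,dS$ — were $L=0$, the $k_0$-mode would vanish to higher order and $\gamma$ would exceed $k_0/2$, a contradiction. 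Then $\lambda^{-\gamma}u(\lambda x)=\sqrt{H(\lambda)}\,\lambda^{-\gamma}w_\lambda(x)\to\sqrt L\,w(x)$ along $\lambda_n$, and setting $\psi:=\sqrt L\,\Psi$ — which is a genuine nonzero eigenfunction of \eqref{eq:1} for $\mu_{k_0}$ precisely because $L>0$ — gives the assertion along the subsequence. Finally, the limit is independent of the subsequence: it is determined by the finitely many limits $\lim_{\lambda\to0}\lambda^{-k_0/2}\varphi_{k_0,m}(\lambda)$, which exist by the same ODE analysis, so one obtains convergence of the full family as $\lambda\to0^+$.

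The step I expect to be the main obstacle is the Pohozaev inequality \eqref{pohoz}: the entire monotonicity argument rests on it, and the novelty lies exactly in making the formal computation rigorous on a domain that fails the exterior ball condition — one must construct the approximating smooth star-shaped domains, control the convergence of the approximating solutions, and check that the interference terms with the moving boundary have the right sign so that they survive in the limit as an inequality. The secondary difficulties are the strict positivity $L>0$ and the elimination of the subsequence in the last step, both of which rely on a careful analysis of the ODEs governing the spherical Fourier coefficients of $u$.
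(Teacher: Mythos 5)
Your overall strategy coincides with the paper's: star-shaped smooth approximating domains to justify the Pohozaev inequality, Almgren monotonicity, blow-up to a homogeneous harmonic profile, positivity of $\lim_{r\to0^+}r^{-2\gamma}\mathcal H(r)$, and removal of the subsequence through the spherical Fourier coefficients. Three steps, however, would fail as written. First, prescribing $u_\varepsilon=0$ on \emph{all} of $\partial A_\varepsilon$ forces $u_\varepsilon\equiv0$, since the form $\int(|\nabla w|^2-fw^2)\,dx$ is coercive on $H^1_0$ of small balls; the approximating problems must carry boundary data approximating $u$ (the paper uses smooth $g_n\to u$ in $H^1$ vanishing near $\Gamma$), with the homogeneous condition imposed only on the regularized crack. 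Second, you use $\mathcal N_{w_{\lambda_n}}(r)\to\mathcal N_w(r)$ to deduce constancy of the limit frequency \emph{before} establishing strong $H^1(B_1)$ convergence, and your proposed route to strong convergence --- testing the equation for $w_{\lambda_n}-w$ with itself --- is blocked because $w_{\lambda_n}-w$ does not vanish on $\partial B_1$: one must control $\int_{\partial B_1}\partial_\nu w_{\lambda_n}\,(w_{\lambda_n}-w)\,dS$, and a uniform $L^2(\partial B_1)$ bound on $\partial_\nu w_{\lambda_n}$ is not free in a domain whose singular set reaches the boundary sphere along the whole edge. The paper obtains it via a doubling property and a mean-value selection of radii $R_\lambda\in[1,2]$ on which the boundary gradient integral is controlled by the interior one; some such device is indispensable.

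Third, expanding $u(r\,\cdot)$ directly in the eigenbasis $\{Y_{k,m}\}$ of \eqref{eq:1} does not yield clean ODEs for $\varphi_{k,m}$, because $u(r\,\cdot)$ vanishes on the rescaled \emph{curved} crack rather than on $\Sigma$, hence it is not in $H^1_0(\mathbb S^N\setminus\Sigma)$; the integrations by parts on the sphere then produce boundary terms on $\Sigma$ involving $\partial_\nu Y_{k,m}$, which blow up like $\mathrm{dist}^{-1/2}$ at the edge. The paper first straightens the crack with a $C^1$ diffeomorphism $\Phi$ and works with $v=u\circ\Phi$, which vanishes exactly on $\tilde\Gamma$; the price is a variable-coefficient operator with $A(x)=\mathrm{Id}+O(|x|)$, whose deviation from the Laplacian enters the Fourier ODEs only as an integrable perturbation $\Upsilon_{k_0,m}$. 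This straightening (or an equivalent substitute) is needed both for the strict positivity of $\lim_{r\to0^+}r^{-2\gamma}\mathcal H(r)$ and for the identification of the limit profile independently of the subsequence, so it should appear explicitly in your argument.
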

We mention that a strongest version of Theorem \ref{teoremagenerale}
will be given in Theorem \ref{teoremabetai}.

As a direct consequence of Theorem \ref{teoremagenerale}
and the boundedness of eigenfunctions of \eqref{eq:1} (see 
Appendix \ref{sec:app_A}),
the following point-wise upper bound holds.
\begin{cor}
  Under the same assumptions as in Theorem \ref{teoremagenerale}, let
  $u\in H^1(B_{\hat{R}})$ be a non-trivial weak solution to
  \eqref{eq:CPu}. Then, there exists $k_0\in\mathbb{N}$, $k_0\geq 1$,
  such that
\[
u(x)=O(|x|^{k_0/2})\quad \text{as $|x|\rightarrow 0^+$}.
\]
\end{cor}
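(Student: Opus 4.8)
The plan is to deduce the pointwise bound from the blow-up convergence \eqref{lambda^gammaesplicito} by transferring it, via elliptic estimates that are uniform in the dilation parameter, into a uniform bound on a spherical shell, and then to conclude by a scaling argument. For $\lambda\in(0,\hat R)$ set $w_\lambda(x):=\lambda^{-k_0/2}u(\lambda x)$, which belongs to $H^1(B_1)$ and, by a change of variables in the weak formulation of \eqref{eq:CPu}, is a weak solution of
\begin{equation*}
-\Delta w_\lambda=f_\lambda\,w_\lambda\ \text{ in } B_1\setminus\Gamma_\lambda,\qquad w_\lambda=0\ \text{ on }\Gamma_\lambda,
\end{equation*}
where $\Gamma_\lambda:=\lambda^{-1}\Gamma$ and $f_\lambda(x):=\lambda^2 f(\lambda x)$. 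By Theorem \ref{teoremagenerale}, $w_\lambda\to w_0$ in $H^1(B_1)$ as $\lambda\to0^+$, with $w_0(x)=|x|^{k_0/2}\psi(x/|x|)$; in particular $\{\|w_\lambda\|_{H^1(B_1)}\}$ is bounded. Moreover the hypotheses on $f$ guarantee that $f_\lambda$ is uniformly controlled near the shell: under \eqref{xi0}--\eqref{xi} one has $\sup_{1/4<|x|<1}|f_\lambda(x)|\le 16\,\xi_f(\lambda)\to0$, while under \eqref{eta0}--\eqref{eta} a scaling computation gives $\int_{B_1}|f_\lambda|\,v^2\,dx\le\eta(\lambda,f)\bigl(\int_{B_1}|\nabla v|^2\,dx+\tfrac{N-1}{2}\int_{\partial B_1}v^2\,dS\bigr)$ for every $v\in H^1(B_1)$, with $\eta(\lambda,f)\to0$.

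Next I would invoke a local boundedness and Hölder estimate of De Giorgi--Nash--Moser type, valid up to the Dirichlet part $\Gamma_\lambda$ of the boundary. Since $g\in C^2$ with $g(0)=0$ and $\nabla g(0)=0$, the rescaled cracks $\Gamma_\lambda$ converge in $C^1_{\mathrm{loc}}$, as $\lambda\to0^+$, to the half-hyperplane $\{(x',x_N,0):x_N\ge0\}$; hence the open sets $B_1\setminus\Gamma_\lambda$ satisfy, along $\Gamma_\lambda$, a capacity density condition that is uniform in $\lambda$ for $\lambda$ small. Combining this with the $H^1(B_1)$-bound on $w_\lambda$ and the uniform control of $f_\lambda$, a standard compactness argument (interior estimates in the radial variable, up-to-the-boundary estimates along the crack, and Arzelà--Ascoli) upgrades \eqref{lambda^gammaesplicito} to
\begin{equation*}
w_\lambda\to w_0\quad\text{in } C^0\bigl(\{1/2\le|x|\le 3/4\}\bigr)\ \text{ as }\lambda\to0^+ .
\end{equation*}

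Since $\psi$ is an eigenfunction of \eqref{eq:1}, it is bounded on $\sfera^N$ (see Appendix \ref{sec:app_A}), hence so is $w_0$; therefore there exist $\lambda_0>0$ and $C>0$ with $\|w_\lambda\|_{L^\infty(\{|x|=1/2\})}\le C$ for all $\lambda\in(0,\lambda_0)$. Given $x$ with $0<|x|<\lambda_0/2$, applying this with $\lambda=2|x|$ — so that $x/\lambda$ lies on $\{|x|=1/2\}$ — yields $|u(x)|=\lambda^{k_0/2}\,|w_\lambda(x/\lambda)|\le C\,(2|x|)^{k_0/2}$, that is, $u(x)=O(|x|^{k_0/2})$ as $|x|\to0^+$, with $k_0$ as in Theorem \ref{teoremagenerale}.

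The only nontrivial ingredient is the uniformity in $\lambda$ of the boundary boundedness/Hölder estimate over the family of highly non-smooth domains $B_1\setminus\Gamma_\lambda$; this rests on the uniform capacity density condition described above, which holds because, near the edge, $\Gamma$ is a $C^2$-graph tangent to a hyperplane at $0$, so its rescalings approach a half-hyperplane. Such estimates, as well as the needed regularity of the rescaled solutions, are in fact already available from the blow-up machinery developed for Theorem \ref{teoremagenerale}; everything else reduces to the scaling identities for $w_\lambda$ and $f_\lambda$.
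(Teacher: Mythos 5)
Your argument is correct, and it is in the spirit of what the paper intends: the authors state this corollary without proof, as a ``direct consequence'' of Theorem \ref{teoremagenerale} and the boundedness of the eigenfunctions of \eqref{eq:1}, and your proposal supplies exactly the step that this formulation glosses over, namely that $H^1(B_1)$ convergence of the blow-ups $w_\lambda$ must be upgraded to a uniform $L^\infty$ bound on a fixed spherical shell before the scaling identity $|u(x)|=\lambda^{k_0/2}|w_\lambda(x/\lambda)|$ with $\lambda=2|x|$ can be invoked. Your verification of the uniform control of $f_\lambda=\lambda^2 f(\lambda\,\cdot)$ on the shell under \eqref{xi0}--\eqref{xi}, and of the scaled form bound under \eqref{eta0}--\eqref{eta}, matches the computations the paper itself performs in the blow-up section.

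One remark: the H\"older equicontinuity, the uniform capacity density condition along $\Gamma_\lambda$, and the resulting $C^0$ convergence on the closed shell are more than the corollary requires, and they are the only part of your argument that needs genuine care near the edge of the rescaled cracks. For the pointwise bound it suffices to have the uniform local boundedness (sup) estimate of De Giorgi--Nash--Moser on the annulus, applied to $w_\lambda\in H^1_{\Gamma_\lambda}(B_1)$ viewed as an $H^1(B_1)$ function: the iteration only uses test functions of the form $\eta^2(w_\lambda-k)^{\pm}$ with $k\geq 0$ and $\eta$ a cutoff, which retain zero trace on $\Gamma_\lambda$ and are therefore admissible by the characterization $H^1_{\Gamma_\lambda}(B_1)=\{w:\tau_{\Gamma_\lambda}(w)=0\}$ used throughout the paper. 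This yields $\sup_{\{|x|=1/2\}}|w_\lambda|\leq C\,\|w_\lambda\|_{H^1(B_1)}\leq C'$ with constants independent of $\lambda$ and without any regularity or density hypothesis on $\Gamma_\lambda$, after which your final scaling step concludes the proof.
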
 

We observe that, due to the vanishing on the half-equator $\Sigma$ of the angular profile $\psi$
appearing in \eqref{lambda^gammaesplicito}, we cannot expect the
reverse estimate $|u(x)|\geq c|x|^{k_0/2}$ to hold for $x$ close to the
origin.  

A further relevant consequence of our asymptotic analysis is the following unique continuation principle, whose proof follows straightforwardly from Theorem \ref{teoremagenerale}.
\begin{cor}
Under the same assumptions as in Theorem \ref{teoremagenerale}, let $u\in H^1(B_{\hat{R}})$ be a weak solution to \eqref{eq:CPu} such that $u(x)=O(|x|^k)$ as $|x|\rightarrow 0$, for any $k\in\mathbb{N}$. Then $u\equiv 0$ in $B_{\hat{R}}$.
\end{cor}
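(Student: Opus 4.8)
The plan is to argue by contradiction, using the sharp dichotomy encoded in Theorem \ref{teoremagenerale}: a non-trivial weak solution of \eqref{eq:CPu} necessarily vanishes at the origin at a \emph{finite} rate $k_0/2$ with $k_0\in\mathbb{N}$, $k_0\geq1$. So I would suppose $u\not\equiv0$ in $B_{\hat R}$; then $u\in H^1(B_{\hat R})\setminus\{0\}$ is a non-trivial weak solution of \eqref{eq:CPu}, and Theorem \ref{teoremagenerale} provides $k_0\in\mathbb{N}$, $k_0\geq1$, and an eigenfunction $\psi\not\equiv0$ of \eqref{eq:1} associated with $\mu_{k_0}$ such that
\begin{equation*}
\lambda^{-k_0/2}u(\lambda\,\cdot)\longrightarrow |x|^{k_0/2}\psi(x/|x|)\quad\text{in }H^1(B_1)\ \text{as }\lambda\to0^+;
\end{equation*}
in particular, this convergence holds in $L^2(B_1)$.

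Next I would exploit the infinite vanishing hypothesis to show that this $L^2(B_1)$-limit must be the zero function. Reading $u(x)=O(|x|^k)$ as the existence, for each $k\in\mathbb{N}$, of constants $C_k,r_k>0$ with $|u(x)|\leq C_k|x|^k$ for a.e.\ $x\in B_{r_k}$, I fix an integer $k$ with $2k>k_0$. For $0<\lambda<r_k$, the change of variables $y=\lambda x$ gives
\begin{equation*}
\int_{B_1}\bigl|\lambda^{-k_0/2}u(\lambda x)\bigr|^2\,dx=\lambda^{-k_0-N-1}\int_{B_\lambda}|u(y)|^2\,dy\leq C_k^2\,\lambda^{-k_0-N-1}\int_{B_\lambda}|y|^{2k}\,dy=c_{N,k}\,C_k^2\,\lambda^{2k-k_0},
\end{equation*}
with $c_{N,k}>0$ depending only on $N$ and $k$; since $2k>k_0$, the right-hand side tends to $0$ as $\lambda\to0^+$. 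Hence $|x|^{k_0/2}\psi(x/|x|)=0$ for a.e.\ $x\in B_1$, and passing to polar coordinates,
\begin{equation*}
0=\int_{B_1}\bigl||x|^{k_0/2}\psi(x/|x|)\bigr|^2\,dx=\frac{1}{k_0+N+1}\int_{\mathbb{S}^N}\psi^2\,dS=\frac{1}{k_0+N+1}\,\|\psi\|_{L^2(\mathbb{S}^N)}^2,
\end{equation*}
so that $\psi\equiv0$ on $\mathbb{S}^N$ (recall that the cut $\Sigma$ has zero measure in $\mathbb{S}^N$). This contradicts the fact that $\psi$ is a non-trivial eigenfunction of \eqref{eq:1}, and therefore $u\equiv0$ in $B_{\hat R}$.

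Since everything reduces to a one-line scaling estimate once Theorem \ref{teoremagenerale} is in hand, there is no genuine obstacle here; the only points deserving a word of care are the precise (a.e., local near $0$) interpretation of the hypothesis $u(x)=O(|x|^k)$ and the observation that the blow-up profile $|x|^{k_0/2}\psi(x/|x|)$ is non-trivial, which is immediate because $\psi$ is, by construction in Theorem \ref{teoremagenerale} (and in its refinement Theorem \ref{teoremabetai}), a non-zero eigenfunction. Alternatively, one could obtain the same contradiction on the sphere, combining the continuity of the trace operator $H^1(B_1)\to L^2(\partial B_1)$ with the analogous bound $\int_{\partial B_1}\bigl|\lambda^{-k_0/2}u(\lambda\theta)\bigr|^2\,dS\leq c\,\lambda^{2k-k_0}\to0$, but the bulk estimate above is the most economical route.
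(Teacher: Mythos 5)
Your argument is correct and is essentially the intended one: the paper states only that the corollary ``follows straightforwardly from Theorem \ref{teoremagenerale}'', and your contradiction argument --- applying the theorem to a putative non-trivial $u$, showing by the scaling estimate that the infinite vanishing order forces the blow-up limit to vanish in $L^2(B_1)$, and contradicting $\|\psi\|_{L^2(\mathbb{S}^N)}=1$ --- is precisely the natural implementation. All the computations (the change of variables giving $c\,\lambda^{2k-k_0}$ and the polar-coordinate identity on $B_1$) check out.
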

Theorem \ref{teoremabetai} will actually give a more precise
description on the limit angular profile $\psi$: if $M_{k_0}\geq 1$
is the multiplicity of the eigenvalue $\mu_{k_0}$ and $\{Y_{k_0,i} :
1\leq i\leq M_{k_0}\}$  is as in \eqref{eq:32}, 
then the eigenfunction $\psi$ in \eqref{lambda^gammaesplicito} can be written as 
\begin{equation}\label{eq:4}
\psi(\theta)=\sum_{i=1}^{m_{k_0}}\beta_i Y_{k_0,i},
\end{equation}
where the coefficients $\beta_i$ are given by the \textit{integral Cauchy-type formula} \eqref{betaiespliciti}.

The paper is organized as follows. In Section 
\ref{sec:appr-probl} 
 we construct a sequence of problems on smooth sets approximating
 the cracked domain,  with corresponding solutions converging to
  the solution of problem \eqref{eq:CPu}. In Section \ref{sec:pohozaev-identity}
we derive  a
Pohozaev  type identity for the approximating problems and
consequently inequality \eqref{pohoz}, which is then used in Section
\ref{sec:almgr-type-freq} to prove the existence of the limit for the
Almgren type 
quotient associated to problem \eqref{eq:CPu}. In Section 
\ref{sec:blow-up-argument} we perform a blow-up analysis and prove
that scaled
solutions converge in some suitable sense to a homogeneous limit
profile, whose homogeneity order is related to the eigenvalues of
problem \eqref{eq:1} and whose angular component is shown to be as in 
\eqref{eq:4} in Section \ref{sec:straightening-domain}, where an auxiliary
equivalent problem with a straightened crack is constructed.
Finally, in the appendix we 
 derive the explicit formula \eqref{eq:2} for the
eigenvalues of problem~\eqref{eq:1}.

\medskip
\noindent \textbf{Notation.} We list below some notation used throughout the paper.
\begin{itemize}
\item[-] For all $r>0$, $B_r$ denotes the open ball $\lbrace x=(x',x_N,x_{N+1})\in \mathbb{R}^{N+1}: |x|<r\rbrace$ in $\mathbb{R}^{N+1}$ with radius $r$ and center at 0.
\item[-] For all $r>0$, $\overline{B_r}=\lbrace x=(x',x_N,x_{N+1})\in \mathbb{R}^{N+1}:|x|\leq r \rbrace$ denotes the closure of $B_r$.
\item[-] For all  $r>0$, $B'_r$ denotes the open ball $\lbrace x=(x',x_N)\in \mathbb{R}^N:|x|<r\rbrace$ in $\mathbb{R}^N$ with radius $r$ and center at 0.
\item[-] $dS$ denotes the volume element on the spheres $\partial B_r$, $r>0$.\\
\end{itemize}

\section{Approximation problem}\label{sec:appr-probl}

 We first prove a coercivity type result for the
quadratic form associated to problem \eqref{eq:CPu} in small
neighbourhoods of $0$.  

\begin{lem}\label{lemmacoerc}
Let $f$ satisfy either \eqref{xi0} or
\eqref{eta0}. Then there exists $r_0\in(0,\hat{R})$ such that, for any $r\in(0,r_0]$ and $u\in H^1(B_r)$,
\begin{equation}\label{equazcoerc}
\int_{B_r}(|\nabla u|^2-|f|u^2)\,dx\geq \frac{1}{2}\int_{B_r}|\nabla u|^2\,dx -\omega(r)\int_{\partial B_r}u^2\,dS
\end{equation}
 and 
\begin{equation}\label{eq:bound-omega}
  r\omega(r)< \frac{N-1}4,
\end{equation}
where 
\begin{equation}\label{eq:omega}
\omega(r)=
\begin{cases}
\dfrac{2}{N-1}\dfrac{\xi_f(r)}{r}, &\text{under assumption
\eqref{xi0}},\\[10pt]
\dfrac{N-1}{2}\dfrac{\eta(r,f)}{r}, &\text{under
assumption \eqref{eta0}}.
\end{cases}
\end{equation}
\end{lem}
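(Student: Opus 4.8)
The plan is to prove the coercivity inequality \eqref{equazcoerc} by directly estimating the potential term $\int_{B_r}|f|u^2\,dx$ in the two cases, using a Hardy-type (or trace-type) inequality on the ball $B_r$ to absorb it into the Dirichlet energy plus a boundary remainder, and then checking that the resulting coefficient $\omega(r)$ satisfies the smallness bound \eqref{eq:bound-omega} once $r_0$ is chosen small enough, which is possible precisely because of assumptions \eqref{xi0} or \eqref{eta0}.

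First, under assumption \eqref{xi0}, I would write $|f(x)|\le |x|^{-2}\xi_f(r)$ for $x\in B_r$, so that $\int_{B_r}|f|u^2\,dx\le \xi_f(r)\int_{B_r}\frac{u^2}{|x|^2}\,dx$. I would then invoke the Hardy inequality with boundary term on the ball in $\R^{N+1}$, namely
\[
\int_{B_r}\frac{u^2}{|x|^2}\,dx\le \frac{4}{(N-1)^2}\int_{B_r}|\nabla u|^2\,dx+\frac{2}{(N-1)r}\int_{\partial B_r}u^2\,dS
\]
(valid since the space dimension is $N+1\ge 3$, with optimal constant $\frac{4}{((N+1)-2)^2}=\frac{4}{(N-1)^2}$). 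Multiplying by $\xi_f(r)$ gives
\[
\int_{B_r}|f|u^2\,dx\le \frac{4\xi_f(r)}{(N-1)^2}\int_{B_r}|\nabla u|^2\,dx+\frac{2\xi_f(r)}{(N-1)r}\int_{\partial B_r}u^2\,dS.
\]
By \eqref{xi0} there is $r_0$ such that $\frac{4\xi_f(r)}{(N-1)^2}\le \frac12$ for $r\le r_0$, which yields \eqref{equazcoerc} with $\omega(r)=\frac{2}{N-1}\frac{\xi_f(r)}{r}$; and $r\omega(r)=\frac{2\xi_f(r)}{N-1}<\frac{N-1}{4}$ for $r_0$ small, giving \eqref{eq:bound-omega}.

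Second, under assumption \eqref{eta0}, the estimate is essentially built into the definition \eqref{eta}: by definition of $\eta(r,f)$ one has, for every $u\in H^1(B_r)$,
\[
\int_{B_r}|f|u^2\,dx\le \eta(r,f)\left(\int_{B_r}|\nabla u|^2\,dx+\frac{N-1}{2r}\int_{\partial B_r}u^2\,dS\right).
\]
Again \eqref{eta0} lets us pick $r_0$ with $\eta(r,f)\le\frac12$ for $r\le r_0$, so that the gradient term is absorbed up to a factor $\frac12$ and the boundary remainder is $\frac{N-1}{2r}\eta(r,f)\int_{\partial B_r}u^2\,dS$, i.e. exactly $\omega(r)=\frac{N-1}{2}\frac{\eta(r,f)}{r}$; shrinking $r_0$ further so that $\eta(r,f)<\frac12$ gives $r\omega(r)=\frac{N-1}{2}\eta(r,f)<\frac{N-1}{4}$, which is \eqref{eq:bound-omega}. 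Taking the minimum of the two thresholds (only one case applies at a time, but this keeps the statement uniform) finishes the proof.

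The only genuine point requiring care — the ``main obstacle'' — is the Hardy inequality with boundary term used in the first case: one must verify it holds on $B_r$ with the stated sharp constant and with the correct scaling in $r$ of the boundary term. This is standard and follows by integrating the pointwise identity obtained from expanding $\operatorname{div}\!\big(\frac{x}{|x|^2}u^2\big)$ and using the divergence theorem on $B_r$, or by a scaling reduction to $r=1$; either way it is routine. Everything else is an immediate consequence of the defining properties \eqref{xi}, \eqref{eta} and the limits \eqref{xi0}, \eqref{eta0}.
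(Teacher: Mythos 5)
Your proposal is correct and follows essentially the same route as the paper: in the first case it reduces to the Hardy inequality with boundary term (which the paper quotes from Wang--Zhu rather than reproving), and in the second case it reads the estimate off the definition of $\eta(r,f)$, with the smallness of $r_0$ coming from \eqref{xi0} or \eqref{eta0} exactly as in the paper. The bound \eqref{eq:bound-omega} is also obtained identically from the choice of $r_0$.
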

\begin{rem}
 For future reference, it is useful to rewrite \eqref{equazcoerc}
as 
\begin{equation}\label{equazcoercriscritta}
\int_{B_r}|f|u^2\,dx\leq \frac{1}{2}\int_{B_r}|\nabla u|^2\,dx +\omega(r)\int_{\partial B_r} u^2\,ds
\end{equation}
for all $u\in H^1(B_r)$ and $r\in (0,r_0]$.
\end{rem}
The proof of Lemma \ref{lemmacoerc} under assumption \eqref{xi0}
is based on the following 
Hardy type inequality with boundary terms, due to Wang and Zhu \cite{Wang}.
\begin{lem}[\cite{Wang}, Theorem 1.1]\label{lemmaHardy}
For every $r>0$ and $u\in H^1(B_r)$,
\begin{equation}\label{Hardy}
\int_{B_r}|\nabla u(x)|^2\,dx+\frac{N-1}{2r}\int_{\partial B_r}|u(x)|^2\,dS\geq\biggl(\frac{N-1}{2}\biggr)^2\int_{B_r}\frac{|u(x)|^2}{|x|^2}\,dx.
\end{equation}
\end{lem}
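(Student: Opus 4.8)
The plan is to derive \eqref{Hardy} by the classical vector field (``completing the square'') argument, exploiting that in the ambient dimension $N+1$ the field $x\mapsto\frac{N-1}{2}\,\frac{x}{|x|^2}$ is the optimal Hardy drift. Indeed, $\operatorname{div}\!\left(\frac{x}{|x|^2}\right)=\frac{(N+1)-2}{|x|^2}=\frac{N-1}{|x|^2}$ in $\R^{N+1}\setminus\{0\}$, so that, putting $W(x):=\frac{N-1}{2}\,\frac{x}{|x|^2}$, one has the algebraic identity $\operatorname{div}W=2|W|^2=\frac{(N-1)^2}{2|x|^2}$; moreover $W\cdot\nu=\frac{N-1}{2r}$ on $\partial B_r$, which is precisely the boundary coefficient appearing in \eqref{Hardy}. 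The inequality \eqref{Hardy} should thus drop out of the nonnegativity of $\int|\nabla u+Wu|^2$ after integrating the cross term by parts.

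First I would prove \eqref{Hardy} for $u\in C^\infty(\overline{B_r})$ and then upgrade to an arbitrary $u\in H^1(B_r)$ by density. Choosing $u_j\in C^\infty(\overline{B_r})$ with $u_j\to u$ in $H^1(B_r)$, the Dirichlet integral and, by continuity of the trace operator on the smooth boundary $\partial B_r$, the boundary integral on the left-hand side of \eqref{Hardy} pass to the limit, while Fatou's Lemma applied along a subsequence converging a.e.\ gives $\int_{B_r}|x|^{-2}u^2\,dx\le\liminf_j\int_{B_r}|x|^{-2}u_j^2\,dx$; comparing the two passages transfers \eqref{Hardy} to $u$ (and shows a posteriori that $|x|^{-1}u\in L^2(B_r)$, so the right-hand side is finite).

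For the smooth case, fix $u\in C^\infty(\overline{B_r})$ and $\varepsilon\in(0,r)$ and expand the square:
\begin{equation*}
0\le\int_{B_r\setminus B_\varepsilon}\bigl|\nabla u+Wu\bigr|^2\,dx=\int_{B_r\setminus B_\varepsilon}|\nabla u|^2\,dx+\int_{B_r\setminus B_\varepsilon}|W|^2u^2\,dx+\int_{B_r\setminus B_\varepsilon}W\cdot\nabla(u^2)\,dx .
\end{equation*}
Integrating the last term by parts on the smooth domain $B_r\setminus B_\varepsilon$, using $\operatorname{div}W=2|W|^2$ and $W\cdot\nu=\frac{N-1}{2\rho}$ on $\partial B_\rho$, turns the right-hand side into
\begin{equation*}
\int_{B_r\setminus B_\varepsilon}|\nabla u|^2\,dx-\int_{B_r\setminus B_\varepsilon}|W|^2u^2\,dx+\frac{N-1}{2r}\int_{\partial B_r}u^2\,dS-\frac{N-1}{2\varepsilon}\int_{\partial B_\varepsilon}u^2\,dS .
\end{equation*}
Since $u$ is bounded and $N\ge2$, the last surface integral is $O(\varepsilon^{N-1})\to0$ as $\varepsilon\to0^+$ (and all the integrals over $B_r\setminus B_\varepsilon$ converge to the corresponding integrals over $B_r$, using that $|x|^{-2}$ is integrable in $\R^{N+1}$ when $N\ge2$); letting $\varepsilon\to0^+$ and rearranging yields $\int_{B_r}|\nabla u|^2\,dx+\frac{N-1}{2r}\int_{\partial B_r}u^2\,dS\ge\int_{B_r}|W|^2u^2\,dx=\bigl(\tfrac{N-1}{2}\bigr)^2\int_{B_r}|x|^{-2}u^2\,dx$, i.e.\ \eqref{Hardy}.

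The only genuine obstacle is the non-integrable singularity of $W$ at the origin, which forbids a direct application of the divergence theorem on $B_r$; this is handled by the excision of $B_\varepsilon$ above together with the elementary estimate of the extra flux through $\partial B_\varepsilon$, which is negligible precisely because the ambient dimension $N+1$ is at least $3$. (Alternatively, one could decompose $u$ into spherical harmonics and reduce \eqref{Hardy} to a one-dimensional weighted Hardy inequality on $(0,r)$ with an endpoint term at $r$, but the vector field computation tracks the sharp boundary constant more transparently.)
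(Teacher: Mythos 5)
Your proof is correct. Note, however, that the paper does not prove this lemma at all: it is quoted verbatim from Wang--Zhu (\cite{Wang}, Theorem 1.1), so there is no internal argument to compare against; your vector-field (``completing the square'') derivation is essentially the standard proof of that cited result. Your bookkeeping of the dimensional shift is right: the ambient dimension is $N+1$, so $\operatorname{div}(x/|x|^2)=\frac{N-1}{|x|^2}$, the optimal drift is $W=\frac{N-1}{2}\frac{x}{|x|^2}$ with $\operatorname{div}W=2|W|^2$ and $W\cdot\nu=\frac{N-1}{2r}$ on $\partial B_r$, which reproduces exactly the constants in \eqref{Hardy}. The excision of $B_\varepsilon$ and the estimate $O(\varepsilon^{N-1})$ for the inner flux are fine (and in fact not strictly needed: that flux term enters with a favorable sign, so it could simply be dropped before letting $\varepsilon\to0^+$ with monotone convergence on $\int_{B_r\setminus B_\varepsilon}|W|^2u^2\,dx$); the density step, with trace continuity for the boundary term and Fatou along an a.e.\ convergent subsequence for the singular term, is also handled correctly.
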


\begin{proof}[Proof of Lemma \ref{lemmacoerc}] Let us first prove the
  lemma under assumption
 \eqref{xi0}. To this purpose, let $r_0\in (0,\hat{R})$ be such that 
\begin{equation}\label{xir0}
\frac{4\xi_f({r})}{(N-1)^2}< \frac{1}{2}\quad\text{for all }r\in(0,r_0].
\end{equation}
Using the definition of $\xi_f(r)$ \eqref{xi} and \eqref{Hardy}, we have that for any $r\in (0,\hat{R})$ and $u\in H^1(B_r)$
\begin{equation}\label{fu^2}
\int_{B_r}|f|u^2\,dx\leq \xi_f(r)\int_{B_r}\frac{|u(x)|^2}{|x|^2}\,dx\leq\frac{4\xi_f(r)}{(N-1)^2}\biggl[\int_{B_r}|\nabla u|^2\,dx+\frac{N-1}{2r}\int_{\partial B_r}u^2\,dS\biggr].
\end{equation}
Thus, for every $0<r\leq r_0$, from \eqref{xir0} and \eqref{fu^2}, we obtain that
\begin{equation*}
\begin{split}
\int_{B_r}\big(|\nabla u|^2-|f|u^2\big)\,dx&\geq\biggl(1-\frac{4\xi_f(r)}{(N-1)^2}\biggr)\int_{B_r}|\nabla u|^2\,dx-\frac{2}{N-1}\frac{\xi_f(r)}{r}\int_{\partial B_r}u^2\,ds\\
&\geq \frac{1}{2}\int_{B_r}|\nabla u|^2\,dx-\frac{2}{N-1}\frac{\xi_f(r)}{r}\int_{\partial B_r}u^2\,ds
\end{split}
\end{equation*}
and this completes the proof of \eqref{equazcoerc}  under assumption
 \eqref{xi0}. 

Now let us prove the lemma under assumption \eqref{eta0}. Let $r_0\in(0,\hat{R})$ be such that 
\begin{equation}\label{<1/2}
\eta(r,f)< \frac{1}{2}\quad \text{for all $r\in(0,r_0]$}.
\end{equation}
From the definition of $\eta(r,f)$ \eqref{eta} it follows that for any $r\in (0,\hat{R})$ and $u\in H^1(B_r)$
\begin{equation}\label{fu^2'}
\int_{B_r}|f|u^2\,dx\leq \eta(r,f)\biggl[\int_{B_r}|\nabla u|^2\,dx+\frac{N-1}{2r}\int_{\partial B_r}u^2\,dS\biggr].
\end{equation}
Thus, for every $0<r\leq r_0$, from \eqref{<1/2} and \eqref{fu^2'} we deduce that
\begin{equation*}
\begin{split}
\int_{B_r}\big(|\nabla u|^2-|f|u^2\big)\,dx&\geq(1-\eta(r,f))\int_{B_r}|\nabla u|^2\,dx-\frac{N-1}{2}\frac{\eta(r,f)}{r}\int_{\partial B_r}u^2\,dS\\
&\geq \frac{1}{2}\int_{B_r}|\nabla u|^2\,dx-\frac{N-1}{2}\frac{\eta(r,f)}{r}\int_{\partial B_r}u^2\,ds,
\end{split}
\end{equation*}
hence concluding the proof of \eqref{equazcoerc} under assumption \eqref{eta0}.

We observe that estimate \eqref{eq:bound-omega} follows from the
definition of $\omega$ in \eqref{eq:omega}, \eqref{xir0}, and \eqref{<1/2}.
\end{proof}
Now we are going  to construct suitable  regular  sets which are
star-shaped with respect to the origin  and which approximate our
cracked domain.   In order to do this, for any $n\in \mathbb{N}\setminus\{0\}$ let $f_n\colon\mathbb{R}\to\mathbb{R}$ be defined as
\[f_n(t)=
\begin{cases}
n|t|+\frac{1}{n}e^{\frac{2n^2|t|}{n^2|t|-2}}, & \text{if } |t|<2/n^2,\\
n|t|, & \text{if }|t|\geq 2/n^2,
\end{cases}
\]
so that $f_n\in C^2(\mathbb{R})$, $f_n(t)\geq n|t|$ and $f_n$
increases for all $t>0$ and decreases for all $t<0$;  furthermore 
\begin{equation}\label{eq:5}
  f_n(t)-t\,f'_n(t)\geq 0\quad\text{for every }t\in\R.
\end{equation}
For all $r>0$ we define 
\begin{equation}\label{Btilde}
\tilde{B}_{{r},n}=\{(x',x_N, x_{N+1})\in B_{r}:\,x_N<g(x')+f_n(x_{N+1})\},
\end{equation}
see Figure \ref{fig:Approximating-domains}.
\begin{figure}[b]
  \centering
  \begin{subfigure}[nooneline]{0.3\linewidth}
    \includegraphics[scale = 0.4]{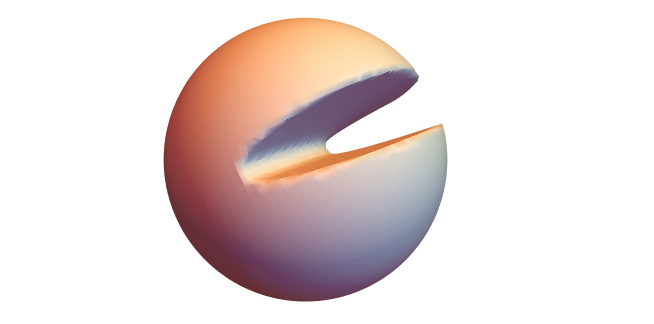}
    \caption{The set $\tilde{B}_{r,n}$}
  \end{subfigure}
  \begin{subfigure}[nooneline]{0.3\linewidth}
    \includegraphics[scale = 0.4]{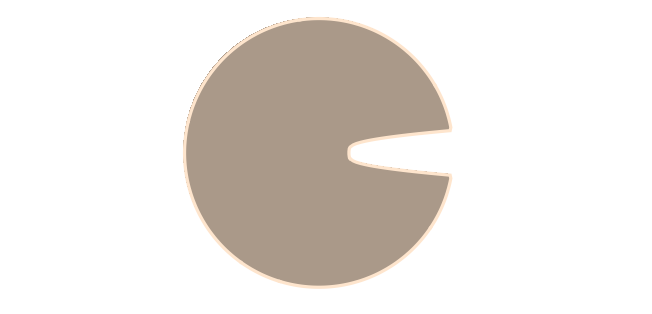}
   \caption{Section of $\tilde{B}_{r,n}$}
  \end{subfigure}
  \caption{Approximating domains}
  \label{fig:Approximating-domains}
\end{figure}
Let $\tilde{\gamma}_{r,n}\subset \partial \tilde{B}_{r,n}$ be the subset of $B_{r}$ defined as
\begin{equation*}
\tilde{\gamma}_{r,n}=\{(x',x_N, x_{N+1})\in B_{r}:\, x_N=g(x')+f_n(x_{N+1})\}
\end{equation*}
and $\tilde{S}_{r,n}$ denote the set given by
$\partial\tilde{B}_{r,n}\setminus \tilde{\gamma}_{r,n}$.
We note that, for any fixed $r>0$, the set
$\tilde{\gamma}_{r,n}$ is not empty and 
$\tilde{B}_{r,n}\neq B_r$
provided $n$ is sufficiently large.
\begin{lem}\label{l:stellato}
Let $0<r\leq\hat{R}$.  
Then, for all
$n\in{\mathbb N}\setminus\{0\}$,  the
  set $\tilde{B}_{r,n}$ is star-shaped with respect to the origin,
  i.e. $x\cdot \nu \geq 0$ for a.e. $x\in \partial \tilde{B}_{r,n}$,
  where $\nu$ is the outward unit normal vector.
\end{lem}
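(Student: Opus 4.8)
The plan is to decompose the boundary $\partial\tilde B_{r,n}=\tilde S_{r,n}\cup\tilde\gamma_{r,n}$ and to check the sign of $x\cdot\nu$ separately on each of the two pieces. The spherical portion and the graph portion are each relatively open $C^{2}$ hypersurfaces, while the ``edge'' where they meet, namely $\{x\in\partial B_r:\ |x|=r,\ x_N=g(x')+f_n(x_{N+1})\}$, is contained in a submanifold of codimension two and is therefore negligible for the surface measure on $\partial\tilde B_{r,n}$; so it suffices to establish $x\cdot\nu\ge0$ on the two smooth pieces, where the outward unit normal is well defined.

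On $\tilde S_{r,n}\subset\partial B_r$ the outward unit normal is $\nu=x/|x|=x/r$, so $x\cdot\nu=|x|=r>0$ and the inequality is immediate.

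For the graph portion I would introduce the $C^{2}$ defining function
\[
\Phi(x',x_N,x_{N+1})=x_N-g(x')-f_n(x_{N+1}),
\]
so that $\tilde B_{r,n}=\{\Phi<0\}\cap B_r$ and $\tilde\gamma_{r,n}=\{\Phi=0\}\cap B_r$. Since $\partial_{x_N}\Phi\equiv1$, the gradient $\nabla\Phi=(-\nabla g(x'),1,-f_n'(x_{N+1}))$ never vanishes, $|\nabla\Phi|\ge1$, and along $\tilde\gamma_{r,n}$ the domain $\tilde B_{r,n}$ lies on the side $\{\Phi<0\}$, so the outward unit normal is $\nu=\nabla\Phi/|\nabla\Phi|$. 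Thus $x\cdot\nu$ has the sign of
\[
x\cdot\nabla\Phi=-x'\cdot\nabla g(x')+x_N-x_{N+1}f_n'(x_{N+1}).
\]
On $\tilde\gamma_{r,n}$ we may substitute $x_N=g(x')+f_n(x_{N+1})$, obtaining
\[
x\cdot\nabla\Phi=\big(g(x')-x'\cdot\nabla g(x')\big)+\big(f_n(x_{N+1})-x_{N+1}f_n'(x_{N+1})\big),
\]
and both parentheses are nonnegative: the first by assumption \eqref{nablaggeq0}, which applies because any point of $\tilde\gamma_{r,n}$ satisfies $|x'|\le|x|<r\le\hat R$, so that $x'\in B'_{\hat R}$; the second by property \eqref{eq:5} of $f_n$. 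Hence $x\cdot\nu\ge0$ on $\tilde\gamma_{r,n}$ as well, and together with the spherical part this proves the claim.

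The computation itself is elementary once the defining function $\Phi$ is chosen and hypotheses \eqref{nablaggeq0} and \eqref{eq:5} are invoked; the only point requiring a little care is the description of $\partial\tilde B_{r,n}$ near the junction of $\tilde S_{r,n}$ and $\tilde\gamma_{r,n}$, where one must verify that the outward normal is defined surface-a.e. and that the exceptional edge set carries no surface measure — this is where I expect the main (minor) obstacle to lie.
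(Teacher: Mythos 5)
Your argument is correct and coincides with the paper's proof: both decompose $\partial\tilde B_{r,n}$ into the spherical part, where $\nu=x/r$ makes the inequality trivial, and the graph part, where the outward normal is $(-\nabla g(x'),1,-f_n'(x_{N+1}))$ normalized and the substitution $x_N=g(x')+f_n(x_{N+1})$ reduces $x\cdot\nu\ge0$ exactly to \eqref{nablaggeq0} and \eqref{eq:5}. Your use of the defining function $\Phi$ and the remark that the junction set is negligible for the surface measure are just slightly more explicit packagings of the same computation.
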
 
\begin{proof}
If $\tilde{\gamma}_{r,n}$ is empty, then $\tilde{B}_{r,n}= B_r$
and the conclusion is obvious. Let $\tilde{\gamma}_{r,n}$ be not
empty. 

 The thesis is trivial if one considers a point
  $x\in \tilde{S}_{r,n}$.

  If $x\in \tilde{\gamma}_{r,n}$, then
  $x=(x', g(x')+f_n(x_{N+1}), x_{N+1})$ and the outward unit normal
  vector at this point is given by
\[
\nu(x)=\frac{(-\nabla g(x'), 1,
  -f'_n(x_{N+1}))}{\sqrt{1+|f'_n(x_{N+1})|^2+|\nabla g(x')|^2}},
\]  
hence we have that 
\[
x\cdot \nu(x)=\frac{g(x')-\nabla g(x')\cdot x'+f_n(x_{N+1})-x_{N+1}\
  f'_n(x_{N+1})}{\sqrt{1+|f'_n(x_{N+1}|^2+|\nabla g(x')|^2}}\geq 0
\]
since $g(x')-\nabla g(x')\cdot x'\geq 0$ by assumption
\eqref{nablaggeq0} and $f_n(x_{N+1})-x_{N+1}\,f'_n(x_{N+1})\geq 0$ by \eqref{eq:5}.
\end{proof}

 From now on, we fix $u\in
H^1(B_{\hat{R}})\setminus \{0\}$, a non-trivial weak solution to
problem \eqref{eq:CPu},  with $f$ satisfying
  either \eqref{xi0}-\eqref{xi} or
  \eqref{eta0}-\eqref{eta}. 
  Since $u \in H^1_\Gamma(B_{\hat{R}})$, there exists a sequence of functions $g_n\in C^\infty_{0,\Gamma}(\overline{B_{\hat{R}}})$ such that $g_n\rightarrow u$ in $H^1(B_{\hat{R}})$. We can choose the functions $g_n$ in such a way that 
\begin{equation}\label{gn=0}
g_n(x_1,\dots,x_N,x_{N+1})=0\quad \text{if $(x_1,\cdots,x_N)\in\Gamma$ and $|x_{N+1}|\leq\frac{\tilde{C}}{n}$},
\end{equation}
with 
\begin{equation}\label{eq:6}
\tilde{C}>\sqrt{2(r_0^2+M^2)}, \quad \text{where }
M=\mathrm{max}\{|g(x')|:|x'|\leq r_0\}.
\end{equation}
\begin{rem}\label{rem:g_n_zero}
  We observe that $g_n\equiv0$ in
  $B_{r_0}\setminus \tilde{B}_{r_0,n}$. Indeed, if
  $x=(x',x_N,x_{N+1})\in B_{r_0}\setminus \tilde{B}_{r_0,n}$, then
\[
x_N\geq g(x')+f_n(x_{N+1})>g(x'),
\]
so that $(x',x_N)\in\Gamma$. Moreover 
\[
x_N\geq f_n(x_{N+1})+g(x')\geq n|x_{N+1}|-M,
\]
with $M$ as in \eqref{eq:6}.
Hence either $|x_{N+1}|\leq \frac{M}{n}$ or $r_0^2\geq x_N^2\geq
(n|x_{N+1}|-M)^2\geq \frac{n^2}{2}|x_{N+1}|^2-M^2$. Thus
$|x_{N+1}|\leq \frac{\sqrt{2(r_0^2+M^2)}}{n}<\frac{\tilde{C}}{n}$, if
we choose $\tilde{C}$ as in \eqref{eq:6}. Then  $g_n(x)=0$ in view of \eqref{gn=0}.
\end{rem}
Now we construct a sequence of approximated solutions
$\{u_n\}_{n\in\mathbb{N}}$ on the sets $\tilde{B}_{r_0,n}$. For each
fixed $n\in\mathbb{N}$, we claim that there exists a unique weak
solution $u_n$ to the boundary value problem
\begin{equation}\label{eq:CPu_n}
\left\{\begin{aligned}
-\Delta u_n&=f u_n &&\text{in}\ \tilde{B}_{{r_0},n}, \\
u_n&=g_n && \text{on}\ \partial \tilde{B}_{{r_0},n}.
\end{aligned}\right. 
\end{equation}
Letting 
\[
v_n:=u_n-g_n,
\] 
we have that $u_n$ weakly solves \eqref{eq:CPu_n} if and only if $v_n\in H^1(\tilde{B}_{{r_0},n})$ is a weak solution to the homogeneous boundary value problem  
\begin{equation}\label{eq:CPv_n}
\left\{\begin{aligned}
-\Delta v_n-f v_n&=fg_n+\Delta g_n &&\text{in}\ \tilde{B}_{{r_0},n}, \\
v_n&=0 && \text{on}\ \partial \tilde{B}_{{r_0},n},
\end{aligned}\right.
\end{equation}
i.e. 
\begin{equation*}\label{eq:CPvndeb}
\left\{\begin{aligned}
&v_n\in H^1_{0}(\tilde{B}_{{r_0},n}), \\
&\int_{\tilde{B}_{{r_0},n}} (\nabla v_n \cdot \nabla\phi-f v_n\phi) \,dx=\int_{\tilde{B}_{{r_0},n}} (f g_n+ \Delta g_n) \phi\,dx \quad\text{for any}\ \phi\in H^1_{0}(\tilde{B}_{{r_0},n}).
\end{aligned}\right. 
\end{equation*}
\begin{lem}\label{vnunica}
Let $r_0$ be as in Lemma \ref{lemmacoerc}. Then, for all $n\in\mathbb{N}$, problem \eqref{eq:CPv_n} has one and only one weak solution $v_n\in H^1_0(\tilde{B}_{{r_0},n})$, where $\tilde{B}_{{r_0},n}$ is defined in \eqref{Btilde}.
\end{lem}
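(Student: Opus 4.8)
The plan is to solve problem \eqref{eq:CPv_n} by the Lax--Milgram theorem applied to the bilinear form
\[
a(v,\phi):=\int_{\tilde{B}_{{r_0},n}}\bigl(\nabla v\cdot\nabla\phi-f\,v\,\phi\bigr)\,dx
\]
on the Hilbert space $H^1_0(\tilde{B}_{{r_0},n})$, paired with the linear functional $\phi\mapsto\int_{\tilde{B}_{{r_0},n}}(f g_n+\Delta g_n)\phi\,dx$. Continuity of both is essentially routine: $f$ is measurable and bounded away from the origin, and the inverse-square (resp.\ the $\eta$) control near $0$ provided by \eqref{xi0} or \eqref{eta0} together with the Hardy inequality of Lemma \ref{lemmaHardy} bounds $\int|f|v\phi$ by a constant times $\|v\|_{H^1}\|\phi\|_{H^1}$; the functional is continuous since $g_n\in C^\infty$ and $\Delta g_n\in L^2$, while $f g_n\in L^2(\tilde B_{r_0,n})$ by the same Hardy-type argument (note $g_n$ is smooth and vanishes near $\Gamma$, hence the product is controlled). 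The real point is coercivity.

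For coercivity I would extend a function $v\in H^1_0(\tilde{B}_{{r_0},n})$ by $0$ to all of $B_{r_0}$, obtaining an element of $H^1(B_{r_0})$ — this is legitimate because $v$ has zero trace on $\partial\tilde{B}_{{r_0},n}$. Then Lemma \ref{lemmacoerc}, applied with $r=r_0$, gives
\[
a(v,v)=\int_{B_{r_0}}\bigl(|\nabla v|^2-|f|v^2\bigr)\,dx\ \geq\ \frac12\int_{B_{r_0}}|\nabla v|^2\,dx-\omega(r_0)\int_{\partial B_{r_0}}v^2\,dS.
\]
Here is where the geometry enters: since $\tilde{B}_{{r_0},n}\neq B_{r_0}$ (for $n$ large the crack-side boundary $\tilde\gamma_{r_0,n}$ is nonempty), the portion of the sphere $\partial B_{r_0}$ lying on $\partial\tilde B_{r_0,n}$ is not all of $\partial B_{r_0}$; but more to the point, $v$ extended by zero vanishes on a neighbourhood of $\tilde\gamma_{r_0,n}$ inside $B_{r_0}$, and on $\tilde S_{r_0,n}$ it has zero trace as well. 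Hence the boundary integral $\int_{\partial B_{r_0}}v^2\,dS$ is actually zero: every point of $\partial B_{r_0}$ either lies on $\tilde S_{r_0,n}\subset\partial\tilde B_{r_0,n}$ where $v=0$, or lies in the region $B_{r_0}\setminus\tilde B_{r_0,n}$ where the zero-extension is identically $0$. Therefore $a(v,v)\geq\frac12\|\nabla v\|_{L^2}^2$, and by the Poincar\'e inequality on the bounded set $\tilde B_{r_0,n}$ this dominates $c\|v\|_{H^1_0}^2$ with $c>0$ independent of (but the argument only needs existence of some) $c$.

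With continuity and coercivity in hand, Lax--Milgram yields a unique $v_n\in H^1_0(\tilde{B}_{{r_0},n})$ solving the weak formulation, which is exactly the assertion of the lemma. The main obstacle — really the only non-formulaic step — is the coercivity bound: one must be careful that Lemma \ref{lemmacoerc} is stated for $H^1(B_r)$ and so requires first extending $v$ by zero, and then one must justify that the resulting spherical boundary term vanishes, using precisely the description of $\partial\tilde B_{r_0,n}$ as $\tilde\gamma_{r_0,n}\cup\tilde S_{r_0,n}$ together with the fact (cf.\ Remark \ref{rem:g_n_zero}) that points of $B_{r_0}$ outside $\tilde B_{r_0,n}$ sit on the crack side. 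An alternative, if one prefers to avoid discussing which part of the sphere is hit, is to absorb the term $\omega(r_0)\int_{\partial B_{r_0}}v^2\,dS$ by noting it is bounded by a trace estimate and choosing $r_0$ possibly smaller using \eqref{eq:bound-omega}; but the zero-extension observation is cleaner and makes the constant explicit.
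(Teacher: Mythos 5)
Your proposal is correct and follows essentially the same route as the paper: Lax--Milgram applied to the bilinear form $a$ on $H^1_0(\tilde{B}_{r_0,n})$, with coercivity and continuity both obtained from Lemma \ref{lemmacoerc} after extending by zero to $B_{r_0}$. The paper states this in three lines; your observation that the spherical boundary term $\int_{\partial B_{r_0}}v^2\,dS$ vanishes for the zero-extension of an $H^1_0(\tilde{B}_{r_0,n})$ function is exactly the detail it leaves implicit.
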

\begin{proof}
Let us consider the bilinear form
\[
a(v,w)= \int_{\tilde{B}_{{r_0},n}} (\nabla v\cdot \nabla w-fvw) \,dx,
\]
for every $v,w\in H^1_0(\tilde{B}_{{r_0},n})$. Lemma \ref{lemmacoerc} implies that $a$ is coercive on $H^1_0(\tilde{B}_{{r_0},n})$. Furthermore, from estimate \eqref{equazcoercriscritta} we easily deduce that $a$ is continuous. The thesis then follows from the Lax-Milgram Theorem. 
\end{proof} 
\begin{prop}\label{prop:vnbound}
Under the same assumptions of Lemma \ref{vnunica}, there exists a positive constant $C>0$ such that $\Vert v_n\Vert_{H^1_0(B_{r_0})}\leq C$ for every $n\in\mathbb{N}$, where $v_n$ is extended trivially to zero in $B_{r_0}\setminus \tilde{B}_{{r_0},n}$.
\end{prop}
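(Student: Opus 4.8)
The plan is to test the weak formulation of \eqref{eq:CPv_n} with $v_n$ itself and to absorb every term on the right-hand side using the coercivity estimates of Lemma \ref{lemmacoerc}. The first preliminary observation is that the trivial extension of $v_n$ to $B_{r_0}$ (still denoted $v_n$) belongs to $H^1_0(B_{r_0})$, that $\nabla v_n$ is the trivial extension of the gradient computed on $\tilde B_{r_0,n}$, and, crucially, that the trace of $v_n$ on $\partial B_{r_0}$ vanishes: indeed $v_n\in H^1_0(\tilde B_{r_0,n})$ vanishes on the whole of $\partial\tilde B_{r_0,n}$, in particular on the spherical portion $\tilde S_{r_0,n}=\partial\tilde B_{r_0,n}\cap\partial B_{r_0}$. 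Consequently all surface integrals over $\partial B_{r_0}$ that appear below reduce to zero, so that the dangerous boundary term in the coercivity inequality is harmless.

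Taking $\phi=v_n$ in the weak formulation of \eqref{eq:CPv_n} and using that $v_n\equiv0$ in $B_{r_0}\setminus\tilde B_{r_0,n}$ gives
\[
\int_{B_{r_0}}\bigl(|\nabla v_n|^2-f\,v_n^2\bigr)\,dx=\int_{\tilde B_{r_0,n}}(f g_n+\Delta g_n)\,v_n\,dx.
\]
For the left-hand side, Lemma \ref{lemmacoerc} applied on $B_{r_0}$, together with $\int_{\partial B_{r_0}}v_n^2\,dS=0$, yields the lower bound $\tfrac12\int_{B_{r_0}}|\nabla v_n|^2\,dx$. On the right-hand side I would integrate by parts the term containing $\Delta g_n$ — legitimate since $g_n$ is smooth and $v_n\in H^1_0(\tilde B_{r_0,n})$, so no boundary contribution appears — obtaining $\int_{\tilde B_{r_0,n}}\Delta g_n\,v_n\,dx=-\int_{\tilde B_{r_0,n}}\nabla g_n\cdot\nabla v_n\,dx$, which is bounded in absolute value by $\|\nabla g_n\|_{L^2(B_{r_0})}\|\nabla v_n\|_{L^2(B_{r_0})}$; this is the key reason for integrating by parts, since one only controls $g_n$ in $H^1$, not in $H^2$. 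For the remaining term I would use Cauchy--Schwarz in the form $\int|f|\,|g_n|\,|v_n|\le\bigl(\int|f|\,g_n^2\bigr)^{1/2}\bigl(\int|f|\,v_n^2\bigr)^{1/2}$ and then invoke the rewritten coercivity inequality \eqref{equazcoercriscritta}: the factor with $v_n$ is bounded by $\bigl(\tfrac12\int_{B_{r_0}}|\nabla v_n|^2\bigr)^{1/2}$ (again the boundary term vanishes), while the factor with $g_n$ is bounded by $\bigl(\tfrac12\int_{B_{r_0}}|\nabla g_n|^2+\omega(r_0)\int_{\partial B_{r_0}}g_n^2\,dS\bigr)^{1/2}$.

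It then remains to secure a uniform-in-$n$ control of the coefficients. Since $g_n\to u$ in $H^1(B_{\hat R})$, the sequence $\{g_n\}$ is bounded in $H^1(B_{\hat R})$, hence $\|\nabla g_n\|_{L^2(B_{r_0})}$ is bounded and, by continuity of the trace operator $H^1(B_{\hat R})\to L^2(\partial B_{r_0})$, so is $\|g_n\|_{L^2(\partial B_{r_0})}$. Collecting the bounds we arrive at an inequality of the form $\tfrac12\int_{B_{r_0}}|\nabla v_n|^2\,dx\le C\,\|\nabla v_n\|_{L^2(B_{r_0})}$ with $C$ independent of $n$, which forces $\|\nabla v_n\|_{L^2(B_{r_0})}\le 2C$; the Poincaré inequality on $H^1_0(B_{r_0})$ then upgrades this to $\|v_n\|_{H^1_0(B_{r_0})}\le C'$, as claimed.

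The computations are routine; the only point requiring a little care — and the place where the star-shaped, regular construction of $\tilde B_{r_0,n}$ pays off — is the verification that the trivial extension of $v_n$ is an admissible competitor in Lemma \ref{lemmacoerc} and in \eqref{equazcoercriscritta} with vanishing boundary integral on $\partial B_{r_0}$, so that coercivity is genuinely free of the surface term. Everything else (integration by parts, Cauchy--Schwarz, absorbing a linear term into a quadratic one, Poincaré) is standard.
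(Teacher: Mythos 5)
Your proof is correct and follows essentially the same route as the paper: test the weak formulation of \eqref{eq:CPv_n} with $v_n$, use Lemma \ref{lemmacoerc} (with the boundary term on $\partial B_{r_0}$ vanishing since $v_n\in H^1_0(B_{r_0})$) to bound the left-hand side from below, and control the right-hand side by integrating $\Delta g_n$ by parts and applying the weighted Cauchy--Schwarz inequality together with \eqref{equazcoercriscritta}, exactly as in the paper's estimates \eqref{fgnlim}--\eqref{laplgnlim}. The only cosmetic difference is that the paper phrases the right-hand-side bounds as boundedness of $fg_n+\Delta g_n$ in $H^{-1}(B_{r_0})$ and takes the gradient $L^2$ norm as the $H^1_0$ norm, so the final appeal to Poincaré is implicit.
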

\begin{proof}
Let us observe that $fg_n$ and $-\Delta g_n$ are bounded in $H^{-1}(B_{r_0})$ as a consequence of the boundedness of $g_n$ in $H^1(B_{r_0})$: indeed, using \eqref{equazcoercriscritta}, one has that, for any $\phi\in H^1_0(B_{r_0})$,
\begin{equation}\label{fgnlim}
\begin{split}
\biggl|\int_{B_{r_0}}fg_n\phi\,dx\biggr|&\leq \biggl(\int_{B_{r_0}}|f|g_n^2\,dx\biggr)^{\frac{1}{2}}\biggl(\int_{B_{r_0}}|f|\phi^2\,dx\biggr)^{\frac{1}{2}}\\
&\leq \frac{1}{2}\biggl(\frac{1}{2}\int_{B_{r_0}}|\nabla g_n|^2\,dx+\omega(r_0)\int_{\partial B_{r_0}}g_n^2\,ds\biggr)^\frac{1}{2}\biggl(\int_{B_{r_0}}|\nabla \phi|^2\,dx\biggr)^\frac{1}{2}\\
&\leq c_1\Vert g_n\Vert_{H^1(B_{r_0})}\Vert \phi\Vert_{H^1_0(B_{r_0})},
\end{split}
\end{equation}
for some $c_1>0$ independent on $n$ and $\phi$, and
\begin{equation}\label{laplgnlim}
\biggl|-\int_{B_{r_0}}\Delta g_n\phi\,dx\biggr|=\biggl|\int_{B_{r_0}}\nabla g_n\cdot\nabla\phi\,dx\biggr|\leq c_2\Vert g_n\Vert_{H^1(B_{r_0})}\Vert \phi\Vert_{H^1_0(B_{r_0})},
\end{equation}
for some $c_2>0$ independent on $n$ and $\phi$. Thus from \eqref{eq:CPv_n}, \eqref{fgnlim}, \eqref{laplgnlim} and Lemma \ref{lemmacoerc}, it follows that
\begin{equation*}
\begin{split}
\Vert v_n\Vert^2_{H^1_0(B_{r_0})}&=\int_{B_{r_0}}|\nabla v_n|^2\,dx\leq 2\int_{B_{r_0}}(|\nabla v_n|^2-fv_n^2)\,dx=2\int_{B_{r_0}} (fg_n+\Delta g_n)v_n\,dx\\
&\leq 2(c_1+c_2)\Vert g_n\Vert_{H^1(B_{r_0})}\Vert v_n\Vert_{H^1_0(B_{r_0})}\leq c_3 \Vert v_n\Vert_{H^1_0(B_{r_0})},
\end{split}
\end{equation*}
for some $c_3>0$ independent on $n$. This completes the proof.
\end{proof}

\begin{prop}\label{unconvdebu}
Under the same assumptions of Lemma \ref{vnunica}, we have that $u_n\rightharpoonup u$ weakly in $H^1(B_{r_0})$, where $u_n$ is extended trivially to zero in $B_{r_0}\setminus \tilde{B}_{r_0,n}$. 
\end{prop}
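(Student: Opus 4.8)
\emph{Strategy and compactness.} The plan is to argue by compactness and then identify the weak limit via the coercivity estimate of Lemma~\ref{lemmacoerc}: I would show that along any subsequence there is a further subsequence with $u_n\rightharpoonup w$ in $H^1(B_{r_0})$, prove that $w$ is a weak solution of \eqref{eq:CPu} in $B_{r_0}$ with $w-u\in H^1_0(B_{r_0})$, and deduce $w=u$; since the limit is then independent of the subsequence, the whole sequence converges weakly to $u$. For the boundedness, write $u_n=v_n+g_n$ in $B_{r_0}$, which is legitimate after the trivial extension thanks to Remark~\ref{rem:g_n_zero}; Proposition~\ref{prop:vnbound} bounds the $v_n$ in $H^1_0(B_{r_0})$ and the convergence $g_n\to u$ in $H^1(B_{\hat R})$ bounds the $g_n$ in $H^1(B_{r_0})$, so $\{u_n\}$ is bounded in $H^1(B_{r_0})$. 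Hence, up to a subsequence, $u_n\rightharpoonup w$ weakly in $H^1(B_{r_0})$ and, by Rellich, $u_n\to w$ strongly in $L^2(B_{r_0})$, while $v_n\rightharpoonup w-u$ weakly in $H^1_0(B_{r_0})$.

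\emph{The limit solves the problem.} Since $\tilde B_{r_0,n}\cap\Gamma=\emptyset$ — indeed on $\Gamma$ one has $x_{N+1}=0$, hence $f_n(x_{N+1})=0$ and $x_N\geq g(x')=g(x')+f_n(x_{N+1})$ — each $\tilde B_{r_0,n}$ is an open subset of $B_{r_0}\setminus\Gamma$, so the trivial extension sends $H^1_0(\tilde B_{r_0,n})$ into $H^1_0(B_{r_0}\setminus\Gamma)$, the closure of $C^\infty_c(B_{r_0}\setminus\Gamma)$ in $H^1(B_{r_0})$; being weakly closed, this subspace contains $w-u$, so $w-u\in H^1_0(B_{r_0}\setminus\Gamma)\subset H^1_\Gamma(B_{r_0})$ and in particular $w\in H^1_\Gamma(B_{r_0})$. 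Next, for $\phi\in C^\infty_c(B_{r_0}\setminus\Gamma)$ I would check that $\operatorname{supp}\phi\subset\tilde B_{r_0,n}$ for $n$ large: since $f_n(t)\to+\infty$ for $t\neq 0$ while $f_n(0)=0$, and $|x_{N+1}|$ is bounded below on any compact subset of $B_{r_0}\setminus\Gamma$ lying in $\{x_N\geq g(x')\}$, the sets $\tilde B_{r_0,n}$ eventually contain any fixed compact subset of $B_{r_0}\setminus\Gamma$. Then $\phi$ is admissible in \eqref{eq:CPu_n}, so after the trivial extension
\[
\int_{B_{r_0}}\nabla u_n\cdot\nabla\phi\,dx=\int_{B_{r_0}}f\,u_n\,\phi\,dx .
\]
The left-hand side tends to $\int_{B_{r_0}}\nabla w\cdot\nabla\phi\,dx$ by weak convergence; on the right-hand side $f$ is bounded on $\operatorname{supp}\phi$ (a compact subset of $B_{r_0}\setminus\Gamma$, hence bounded away from $0\in\Gamma$), so $f\phi\in L^\infty$ with compact support and the strong $L^2$ convergence of $u_n$ gives $\int_{B_{r_0}}f\,u_n\,\phi\,dx\to\int_{B_{r_0}}f\,w\,\phi\,dx$. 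Therefore $w$ is a weak solution of \eqref{eq:CPu} in $B_{r_0}$.

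\emph{Identification.} Let $z:=w-u\in H^1_0(B_{r_0}\setminus\Gamma)$. Subtracting the weak formulations of \eqref{eq:CPu} for $w$ and for $u$ yields $\int_{B_{r_0}}(\nabla z\cdot\nabla\phi-f z\phi)\,dx=0$ for every $\phi\in C^\infty_c(B_{r_0}\setminus\Gamma)$; by density — using that this bilinear form is continuous on $H^1_0(B_{r_0}\setminus\Gamma)$, which follows from \eqref{equazcoercriscritta} with the boundary term dropped — the identity persists for $\phi=z$, giving $\int_{B_{r_0}}(|\nabla z|^2-f z^2)\,dx=0$. Since $z\in H^1_0(B_{r_0})$, the boundary term in \eqref{equazcoerc} (applied with $r=r_0$) vanishes, so $0\geq\frac12\int_{B_{r_0}}|\nabla z|^2\,dx$; hence $z\equiv 0$, i.e. $w=u$, and the whole sequence $u_n$ converges weakly to $u$ in $H^1(B_{r_0})$.

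\emph{Main obstacle.} The substantive points are the functional-analytic ones behind the passage to the limit: verifying that the trivial extensions of the $v_n$ land in $H^1_0(B_{r_0}\setminus\Gamma)$ (this relies on $\tilde B_{r_0,n}$ being disjoint from $\Gamma$, together with the zero-capacity property of the edge $\partial\Gamma$ already invoked to describe $H^1_\Gamma$), and legitimizing the choice $\phi=z$ in the identification step through the same density statement. Once these are settled, Lemma~\ref{lemmacoerc} forces the weak limit to coincide with $u$, which completes the argument.
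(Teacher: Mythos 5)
Your overall architecture (compactness, identification of the limit as a solution, uniqueness via Lemma \ref{lemmacoerc}, Urysohn) matches the paper's, and most steps are sound. However, there is one genuine gap, and it sits exactly at the point where the construction is most delicate. You claim that $\tilde B_{r_0,n}\cap\Gamma=\emptyset$ because ``$f_n(0)=0$''. This is false: from the definition, $f_n(0)=\tfrac1n e^{0}=\tfrac1n>0$ (the positive bump at $t=0$ is precisely what makes $f_n$ of class $C^2$, since $n|t|$ alone is not differentiable at the origin). Consequently every point of $\Gamma$ with $g(x')\le x_N<g(x')+\tfrac1n$ belongs to $\tilde B_{r_0,n}$: the approximating domains deliberately cover a strip of the crack of width $\sim 1/n$ near the edge $\partial\Gamma$, which is what allows them to be smooth and to satisfy the exterior ball condition. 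Therefore the trivial extension does \emph{not} map $H^1_0(\tilde B_{r_0,n})$ into $H^1_0(B_{r_0}\setminus\Gamma)$, and your conclusion that $w-u\in H^1_0(B_{r_0}\setminus\Gamma)$ does not follow. This membership is not a side remark: you use it both to get $w\in H^1_\Gamma(B_{r_0})$ and to legitimize taking $\phi=z$ in the identification step, so the argument as written collapses there.

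The repair is the one the paper uses: introduce $\Gamma_\delta=\{(x',x_N):x_N\ge g(x')+\delta\}$ and observe that, for each fixed $\delta>0$, $\Gamma_\delta\cap B_{r_0}\subset B_{r_0}\setminus\tilde B_{r_0,n}$ as soon as $n>1/\delta$ (since being removed from $\tilde B_{r_0,n}$ on $\{x_{N+1}=0\}$ means $x_N\ge g(x')+\tfrac1n$). Hence $u_n\in H^1_{\Gamma_\delta}(B_{r_0})$ for $n$ large, and the weak closedness of $H^1_{\Gamma_\delta}(B_{r_0})$ gives that the weak limit lies in $H^1_{\Gamma_\delta}(B_{r_0})$ for every $\delta>0$, and therefore in $H^1_\Gamma(B_{r_0})$; the negligibility of the edge $\partial\Gamma$ (zero capacity) is what makes this last passage harmless. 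The remaining parts of your proposal---boundedness via Proposition \ref{prop:vnbound}, the observation that any $\phi\in C^\infty_c(B_{r_0}\setminus\Gamma)$ is eventually supported in $\tilde B_{r_0,n}$ (correct, because on such a compact set either $x_N<g(x')$ or $|x_{N+1}|$ is bounded below, so $f_n(x_{N+1})\to+\infty$ uniformly), the passage to the limit in the weak formulation, the trace identification, and the coercivity argument forcing $w=u$---are all correct and essentially identical to the paper's.
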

\begin{proof}
 We observe that the trivial extension to zero of $u_n$ in
$B_{r_0}\setminus \tilde{B}_{r_0,n}$ belongs to $H^1(B_{r_0})$ since
the trace of $u_n$ on $\tilde{\gamma}_{r_0,n}$ is null in view of
Remark \ref{rem:g_n_zero}. 

From Proposition \ref{prop:vnbound} it follows that there exist
$\tilde{v}\in H^1_0(B_{r_0})$ and a subsequence $\{v_{n_k}\}$ of
$\{v_n\}$ such that $v_{n_k}\rightharpoonup \tilde{v}$ weakly in
$H^1_0(B_{r_0})$. Then
$u_{n_k}=v_{n_k}+g_{n_k}\rightharpoonup \tilde{u}$ weakly in $H^1(B_{r_0})$,
where $\tilde{u}:=\tilde{v}+u$.  Let
$\phi\in C^\infty_c(B_{r_0}\setminus \Gamma)$. Arguing as in
Remark \ref{rem:g_n_zero}, we can prove that 
$\phi\in H^1_0(\tilde{B}_{r_0,n_k})$ for all sufficiently large
$k$. Hence, from \eqref{eq:CPu_n} it follows that, for all sufficiently large
$k$,
\begin{equation}\label{unk}
\int_{B_{r_0}}\nabla u_{n_k}\cdot\nabla \phi \,dx=\int_{B_{r_0}}fu_{n_k}\phi \,dx,
\end{equation}
where $u_{n_k}$ is extended trivially to zero in
$B_{r_0}\setminus \tilde{B}_{r_0,n_k}$. Passing to the limit in
\eqref{unk}, we obtain that
\begin{equation*}
\int_{B_{r_0}}\nabla \tilde{u}\cdot\nabla\phi\,dx=\int_{B_{r_0}}f \tilde{u}\phi\,dx
\end{equation*}
for every $\phi\in C^\infty_c(B_{r_0}\setminus \Gamma)$.  Furthermore
$\tilde{u}=u$ on $\partial B_{r_0}$ in the trace sense: indeed, due to
compactness of the trace map  $\gamma:H^1(B_{r_0})\to  L^2(\partial B_{r_0})$, we have that
$\gamma(u_{n_k})\rightarrow\gamma(\tilde{u})$ in $L^2(\partial
B_{r_0})$ and $\gamma(u_{n_k})=\gamma(g_{n_k})\rightarrow \gamma(u)$ in $L^2(\partial B_{r_0})$,
since $g_n\rightarrow u\ \text{in}\ H^1(B_{r_0})$. 

 Finally, we prove
that $\tilde{u}\in H^1_\Gamma(B_{r_0})$.
To this aim, let $\Gamma_\delta=  
\{(x',x_N)\in \mathbb{R}^N:\, x_N\geq g(x')+\delta\}$  for every $\delta>0$. For every
$\delta>0$ we have that $\Gamma_\delta \cap B_{r_0}\subset
B_{r_0}\setminus \tilde{B}_{r_0,n}$ provided $n$ is sufficiently
large. Hence, since $u_{n}$ is extended trivially to zero in
$B_{r_0}\setminus \tilde{B}_{r_0,n}$, we have that, for every $\delta>0$, $u_n\in
H^1_{\Gamma_\delta}(B_{r_0})$ provided $n$ is sufficiently
large. Since $H^1_{\Gamma_\delta}(B_{r_0})$ is weakly
closed in $H^1(B_{r_0})$, it follows that $\tilde{u}\in
H^1_{\Gamma_\delta}(B_{r_0})$ for every $\delta>0$, and hence
$\tilde{u}\in H^1_\Gamma(B_{r_0})$. 
 
Thus $\tilde{u}$ weakly solves
\begin{equation*}\label{eq:CPutilde}
\left\{\begin{aligned}
-\Delta \tilde{u}&=f \tilde{u} &&\text{in}\ B_{r_0}\setminus\Gamma, \\
\tilde{u}&=u && \text{on}\ \partial B_{r_0}, \\
\tilde{u}&=0 && \text{on}\ \Gamma.
\end{aligned}\right. 
\end{equation*}
Now we consider the function $U:=\tilde{u}-u$: it weakly solves the following problem
\begin{equation}\label{eq:CPU}
\left\{\begin{aligned}
-\Delta U&=f U &&\text{in}\ B_{r_0}\setminus\Gamma, \\
U&=0 && \text{on}\ \partial B_{r_0}, \\
U&=0 && \text{on}\ \Gamma.
\end{aligned}\right. 
\end{equation}
Testing equation \eqref{eq:CPU} with $U$ itself and using Lemma \ref{lemmacoerc}, we obtain that 
\begin{equation*}
\frac{1}{2}\int_{B_{r_0}}|\nabla U|^2\,dx\leq \int_{B_{r_0}}(|\nabla U|^2-fU^2)\,dx=0,
\end{equation*}
so that $U=0$, hence $u=\tilde{u}$. By Urysohn's subsequence
principle, we can conclude that $u_n\rightharpoonup u$ weakly in $H^1(B_{r_0})$.   
\end{proof}

 Our next aim is to prove strong convergence of the sequence $\{u_n\}_{n\in\mathbb{N}}$ to $u$ in $H^1(B_{r_0})$.
\begin{prop}\label{unconvu}
  Under the same assumptions of Lemma \ref{vnunica}, we have that
  $u_n\rightarrow u$ in $H^1(B_{r_0})$.
\end{prop}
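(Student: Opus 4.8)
The plan is to upgrade the weak convergence $u_n \rightharpoonup u$ from Proposition \ref{unconvdebu} to strong convergence in $H^1(B_{r_0})$. Since weak convergence together with convergence of norms implies strong convergence in a Hilbert space, it suffices to show that $\|u_n\|_{H^1(B_{r_0})} \to \|u\|_{H^1(B_{r_0})}$, or equivalently (since the trace terms converge by compactness of the trace map, as already noted) that $\int_{B_{r_0}} |\nabla u_n|^2\,dx \to \int_{B_{r_0}} |\nabla u|^2\,dx$. First I would write down the natural energy identity for $u_n$: testing \eqref{eq:CPu_n} with $v_n = u_n - g_n \in H^1_0(\tilde{B}_{r_0,n})$ gives
\[
\int_{\tilde{B}_{r_0,n}} \nabla u_n \cdot \nabla v_n\,dx = \int_{\tilde{B}_{r_0,n}} f u_n v_n\,dx,
\]
which after expanding $v_n = u_n - g_n$ rearranges to
\[
\int_{\tilde{B}_{r_0,n}} |\nabla u_n|^2\,dx = \int_{\tilde{B}_{r_0,n}} \nabla u_n \cdot \nabla g_n\,dx + \int_{\tilde{B}_{r_0,n}} f u_n (u_n - g_n)\,dx.
\]
Because $u_n$ and $g_n$ are both extended by zero outside $\tilde{B}_{r_0,n}$ (legitimately, by Remark \ref{rem:g_n_zero} and the trace vanishing), all integrals may be taken over $B_{r_0}$.

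Next I would pass to the limit on the right-hand side. The term $\int_{B_{r_0}} \nabla u_n \cdot \nabla g_n\,dx \to \int_{B_{r_0}} |\nabla u|^2\,dx$ because $u_n \rightharpoonup u$ weakly and $g_n \to u$ strongly in $H^1(B_{r_0})$. For the potential term, I would first use $g_n \to u$ strongly in $H^1$ together with the coercivity/continuity estimate \eqref{equazcoercriscritta} (which controls $\int_{B_{r_0}} |f| w^2\,dx$ by the $H^1(B_{r_0})$-norm of $w$, uniformly) to replace $g_n$ by $u$ up to an error vanishing as $n \to \infty$: indeed $\big| \int_{B_{r_0}} f u_n(g_n - u)\,dx \big| \le \big(\int |f| u_n^2\big)^{1/2}\big(\int |f|(g_n-u)^2\big)^{1/2}$, and the first factor is bounded by Proposition \ref{prop:vnbound} while the second tends to zero. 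It then remains to show $\int_{B_{r_0}} f u_n(u_n - u)\,dx \to 0$. Writing $f u_n (u_n - u) = f(u_n - u)^2 + f u(u_n - u)$, the second piece tends to zero since $f u \in L^2_{\mathrm{loc}}$-type bounds via \eqref{equazcoercriscritta} give $f u \in H^{-1}(B_{r_0})$ and $u_n - u \rightharpoonup 0$, while for the first piece one uses \eqref{equazcoercriscritta} applied to $w = u_n - u$ to bound $\big|\int f(u_n-u)^2\big| \le \tfrac12 \int_{B_{r_0}}|\nabla(u_n-u)|^2 + \omega(r_0)\int_{\partial B_{r_0}}(u_n-u)^2$.

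Collecting terms, the energy identity in the limit form reads, denoting $A := \limsup_n \int_{B_{r_0}}|\nabla u_n|^2$,
\[
A \le \int_{B_{r_0}} |\nabla u|^2\,dx + \tfrac12 \big(A - \textstyle\int_{B_{r_0}}|\nabla u|^2\big) + \omega(r_0)\,\limsup_n \textstyle\int_{\partial B_{r_0}}(u_n - u)^2\,dS,
\]
where I have used weak lower semicontinuity $\int|\nabla u|^2 \le \liminf \int|\nabla u_n|^2$ to handle the $(u_n - u)^2$-in-the-bulk expansion $\int|\nabla u_n|^2 = \int|\nabla u|^2 + \int|\nabla(u_n-u)|^2 + 2\int\nabla u\cdot\nabla(u_n-u)$ with the last cross term vanishing. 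The boundary term vanishes by compactness of the trace operator $H^1(B_{r_0}) \to L^2(\partial B_{r_0})$, exactly as in the proof of Proposition \ref{unconvdebu}. Hence $A \le \int_{B_{r_0}}|\nabla u|^2$, which combined with weak lower semicontinuity forces $\int_{B_{r_0}}|\nabla u_n|^2 \to \int_{B_{r_0}}|\nabla u|^2$. Together with $u_n \rightharpoonup u$ this yields $u_n \to u$ strongly in $H^1(B_{r_0})$, and since the limit is independent of the subsequence, Urysohn's principle gives convergence of the full sequence.

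The main obstacle will be handling the potential term $\int_{B_{r_0}} f u_n(u_n - g_n)\,dx$ cleanly: $f$ is only controlled through the Hardy-type inequality \eqref{equazcoercriscritta}, so one cannot simply invoke dominated convergence or compact Sobolev embedding directly on the products with $f$. The key technical device is to always reduce, via the elementary inequality $\int |f| w v \le (\int |f| w^2)^{1/2}(\int |f| v^2)^{1/2}$ and then \eqref{equazcoercriscritta}, to quantities of the form $\int_{B_{r_0}}|\nabla w|^2$ and $\int_{\partial B_{r_0}} w^2$ that are either uniformly bounded (by Proposition \ref{prop:vnbound}) or genuinely small (by strong $H^1$-convergence $g_n \to u$, or — for the $(u_n-u)$ terms — by absorbing the bulk gradient part onto the left via the factor $\tfrac12$ in the coercivity estimate and sending the trace part to zero by compactness). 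Care is also needed to justify throughout that the zero-extensions of $u_n$ and $g_n$ make the passage between integrals over $\tilde{B}_{r_0,n}$ and over $B_{r_0}$ legitimate, which is precisely the content of Remark \ref{rem:g_n_zero}.
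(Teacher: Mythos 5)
Your argument is correct, and it rests on the same two pillars as the paper's proof --- testing \eqref{eq:CPu_n} with $v_n=u_n-g_n$ and invoking the coercivity estimate of Lemma \ref{lemmacoerc} --- but it is packaged differently. The paper never leaves the variable $v_n$: since $v_n\in H^1_0(B_{r_0})$ and $v_n\rightharpoonup0$ (by Proposition \ref{unconvdebu} and $g_n\to u$), testing \eqref{eq:CPv_n} with $v_n$ shows that $\int_{B_{r_0}}(|\nabla v_n|^2-fv_n^2)\,dx=\int_{B_{r_0}}(fg_nv_n-\nabla g_n\cdot\nabla v_n)\,dx\to0$, and because $v_n$ has zero trace the boundary term in \eqref{equazcoerc} is absent, so $\tfrac12\|v_n\|^2_{H^1_0}\to0$ immediately and $u_n=g_n+v_n\to u$. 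You instead prove convergence of the Dirichlet energies of $u_n$ and conclude via the ``weak convergence plus convergence of norms'' criterion; this forces you to decompose the potential term into $fu_n(g_n-u)$, $fu(u_n-u)$ and $f(u_n-u)^2$, to absorb the last piece using the factor $\tfrac12$ in \eqref{equazcoercriscritta}, and to kill the resulting boundary term $\omega(r_0)\int_{\partial B_{r_0}}(u_n-u)^2\,dS$ by trace compactness --- steps the paper avoids entirely by its choice of variable. Your route is longer but each step is sound, and it would adapt to settings without an exact $H^1_0$ corrector. One small imprecision: when you reduce $\|u_n\|_{H^1}\to\|u\|_{H^1}$ to convergence of the gradient terms, the $L^2(B_{r_0})$ part of the norm converges by the compact embedding $H^1(B_{r_0})\hookrightarrow L^2(B_{r_0})$, not by compactness of the trace map (which you correctly use elsewhere, for the $\partial B_{r_0}$ integrals); also, the final appeal to Urysohn is superfluous, since your limsup argument already applies to the full sequence.
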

\begin{proof}
  From Proposition \ref{unconvdebu} we deduce that
  $v_n\rightharpoonup 0$ in $H^1(B_{r_0})$, hence testing
  \eqref{eq:CPv_n} with $v_n$ itself, we have that
\begin{equation*}
\begin{split}
\int_{B_{r_0}}(|\nabla v_n|^2-fv_n^2)\,dx&=\int_{\tilde{B}_{r_0,n}}(|\nabla v_n|^2-fv_n^2)\,dx\\
&=\int_{\tilde{B}_{r_0,n}}(fg_nv_n-\nabla g_n\nabla v_n)\,dx=\int_{B_{r_0}}(fg_nv_n-\nabla g_n\nabla v_n)\,dx\rightarrow 0
\end{split}
\end{equation*}
as\ $n\rightarrow\infty$.
Thus, from Lemma \ref{lemmacoerc}, we deduce that $\Vert v_n\Vert_{H^1_0(B_{r_0})}\rightarrow 0$ as $n\rightarrow\infty$, hence $v_n\rightarrow 0$ in $H^1(B_{r_0})$. This yields that $u_n=g_n+v_n\rightarrow u$ in $H^1(B_{r_0})$.
\end{proof}
\section{Pohozaev Identity}\label{sec:pohozaev-identity}
 In this section we derive a Pohozaev type identity for $u_n$ 
   in which we will pass to the limit using Proposition \ref{unconvu}.
For every $r\in(0,r_0)$ and $v\in H^1(B_r)$, we define 
\begin{equation*}
\mathcal R(r,v)=
\begin{cases}
{\displaystyle\int_{B_r}} f v (x\cdot \nabla v)\,dx, &\text{if $f$ satisfies 
\eqref{xi0}-\eqref{xi}},\\[10pt]
\dfrac{r}{2}{\displaystyle\int_{\partial B_r}}f\,v^2\,dS-\dfrac{1}{2}{\displaystyle\int_{B_r}}\big(\nabla
f\cdot x+(N+1)f\big) v^2\,dx, &\text{if $f$ satisfies \eqref{eta0}-\eqref{eta}}. 
\end{cases}
\end{equation*}

\begin{lem}\label{LemmaPohoz} Let $0<r< r_0$. 
 There exists $n_0=n_0(r)\in\mathbb{N}\setminus\{0\}$ such that, for  all $n\geq
n_0$, 
\begin{multline}\label{pohozaevun}
-\frac{N-1}{2}\int_{\tilde{B}_{r,n}}|\nabla u_n|^2\,dx+\frac{r}{2}\int_{\tilde{S}_{r,n}}|\nabla u_n|^2\,dS\\-\frac{1}{2}\int_{\tilde{\gamma}_{r,n}}\biggl|\frac{\partial u_n}{\partial\nu}\biggr|^2x\cdot\nu\,dS
-r\int_{\tilde{S}_{r,n}} \biggl|\frac{\partial u_n}{\partial
  \nu}\biggr|^2\,dS-\mathcal R(r,u_n) = 0.
\end{multline}
\end{lem}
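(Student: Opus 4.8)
The strategy is the classical Pohozaev/Rellich computation: multiply the equation $-\Delta u_n = f u_n$ by the radial vector field $x\cdot\nabla u_n$ and integrate over $\tilde B_{r,n}$, where now — unlike on the original cracked domain — the regularity afforded by the exterior ball condition (Lemma \ref{l:stellato} gives star-shapedness, and the sets $\tilde B_{r,n}$ are smooth enough, see \cite{Adolf}) guarantees $u_n\in H^2_{\mathrm{loc}}$ near the boundary pieces, so that $x\cdot\nabla u_n$ is an admissible test function. The first step is to justify this regularity and fix $n_0=n_0(r)$ large enough that $\partial B_r$ meets $\tilde B_{r,n}$ transversally and that $g_n\equiv 0$ on the portion of $\partial B_r$ inside $\Gamma$, so that the integrations by parts below are legitimate.

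**Main computation.** Write $\int_{\tilde B_{r,n}}(-\Delta u_n)(x\cdot\nabla u_n)\,dx = \int_{\tilde B_{r,n}} f u_n (x\cdot\nabla u_n)\,dx$. For the left-hand side, the standard identity
\[
\int_{D}(-\Delta w)(x\cdot\nabla w)\,dx = \frac{N-1}{2}\int_D|\nabla w|^2\,dx + \int_{\partial D}\Bigl(\frac{\partial w}{\partial\nu}\Bigr)(x\cdot\nabla w)\,dS - \frac{1}{2}\int_{\partial D}|\nabla w|^2\,(x\cdot\nu)\,dS
\]
holds on a Lipschitz domain $D=\tilde B_{r,n}$ for $w=u_n$ once $w$ is $H^2$ up to the relevant boundary. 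Now $\partial\tilde B_{r,n}=\tilde S_{r,n}\cup\tilde\gamma_{r,n}$. On $\tilde\gamma_{r,n}$ the function $u_n$ equals $g_n$, which vanishes in a neighborhood of $\tilde\gamma_{r,n}$ by Remark \ref{rem:g_n_zero} (for $n$ large), so $\nabla u_n = \bigl(\tfrac{\partial u_n}{\partial\nu}\bigr)\nu$ there, the tangential gradient being zero; hence the boundary contributions on $\tilde\gamma_{r,n}$ reduce to $\tfrac12\int_{\tilde\gamma_{r,n}}\bigl|\tfrac{\partial u_n}{\partial\nu}\bigr|^2 (x\cdot\nu)\,dS$. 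On $\tilde S_{r,n}\subset\partial B_r$ we have $x=r\nu$, so $x\cdot\nabla u_n = r\,\tfrac{\partial u_n}{\partial\nu}$ and $x\cdot\nu=r$, giving $r\int_{\tilde S_{r,n}}\bigl|\tfrac{\partial u_n}{\partial\nu}\bigr|^2dS - \tfrac r2\int_{\tilde S_{r,n}}|\nabla u_n|^2 dS$. Rearranging produces exactly the first four terms of \eqref{pohozaevun}.

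**The remainder term.** It remains to identify $\int_{\tilde B_{r,n}} f u_n(x\cdot\nabla u_n)\,dx$ with $\mathcal R(r,u_n)$. In the first case (assumptions \eqref{xi0}-\eqref{xi}) this is the definition, so nothing is needed. In the second case (assumptions \eqref{eta0}-\eqref{eta}, where $\nabla f$ is locally bounded away from $0$) one integrates by parts once more: $f u_n(x\cdot\nabla u_n)=\tfrac12 f\,x\cdot\nabla(u_n^2)$, and $\int_{\tilde B_{r,n}}\tfrac12 f\,x\cdot\nabla(u_n^2)\,dx = \tfrac12\int_{\partial\tilde B_{r,n}} f u_n^2 (x\cdot\nu)\,dS - \tfrac12\int_{\tilde B_{r,n}}\bigl(\nabla f\cdot x + (N+1)f\bigr)u_n^2\,dx$. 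On $\tilde\gamma_{r,n}$ the boundary term vanishes since $u_n=g_n=0$ there, and on $\tilde S_{r,n}$ one has $x\cdot\nu=r$, yielding $\tfrac r2\int_{\tilde S_{r,n}}f u_n^2\,dS$; but since $u_n=g_n=0$ on the part of $\partial B_r$ lying in $\Gamma$ and $\tilde S_{r,n}$ differs from $\partial B_r$ only by such a set, this equals $\tfrac r2\int_{\partial B_r}f u_n^2\,dS$, and similarly the bulk integral over $\tilde B_{r,n}$ can be taken over $B_r$ (the integrand vanishing on the complement). This matches the definition of $\mathcal R(r,u_n)$ precisely.

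**Expected main obstacle.** The analytically delicate point is not the formal algebra but justifying that $u_n$ has enough Sobolev regularity (essentially $H^2$ near $\tilde\gamma_{r,n}$ and across $\tilde S_{r,n}$) for $x\cdot\nabla u_n$ to be an admissible test function and for the boundary integrals of $|\nabla u_n|^2$ and $\bigl|\tfrac{\partial u_n}{\partial\nu}\bigr|^2$ to make sense as traces — this is exactly where the exterior ball condition of the approximating domains (absent for $\Omega\setminus\Gamma$ itself) is used, invoking \cite{Adolf}; one must also handle the corner-type set $\tilde S_{r,n}\cap\tilde\gamma_{r,n}$, whose $N$-dimensional measure is zero and which therefore does not contribute. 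Choosing $n_0(r)$ so that all the "for $n$ large" statements (Remark \ref{rem:g_n_zero}, transversality of $\partial B_r$, $\phi\in H^1_0(\tilde B_{r,n})$) hold simultaneously is routine bookkeeping.
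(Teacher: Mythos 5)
Your overall strategy is the same as the paper's: test with $x\cdot\nabla u_n$, use the $H^2$ regularity coming from the exterior ball condition of the approximating domain (via \cite{Adolf}), split $\partial\tilde B_{r,n}$ into $\tilde S_{r,n}$ and $\tilde\gamma_{r,n}$, use that the tangential gradient vanishes on $\tilde\gamma_{r,n}$, and, under \eqref{eta0}--\eqref{eta}, rewrite $fu_n(x\cdot\nabla u_n)=\tfrac12 f\,x\cdot\nabla(u_n^2)$ and integrate by parts to reach $\mathcal R(r,u_n)$. However, there is one genuine gap: you perform the Rellich identity on all of $\tilde B_{r,n}$, asserting that $u_n$ is ``$H^2$ up to the relevant boundary.'' This is not available near the origin, which is an \emph{interior} point of $\tilde B_{r,n}$ (since $f_n(0)=1/n>0$) at which the potential $f$ may be singular: $f$ is only assumed bounded on $B_{\hat R}\setminus B_\delta$ for each $\delta>0$, and under \eqref{xi0}--\eqref{xi} it may blow up like $|x|^{-2+\epsilon}$. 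Consequently $fu_n\in L^2_{\mathrm{loc}}$ only away from $0$, and elliptic regularity gives $u_n\in H^2(\tilde B_{r,n}\setminus B_\delta)$ for each $\delta$, not $u_n\in H^2(\tilde B_{r,n})$. Your identity as written is therefore not justified on the full domain.

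The fix, which is what the paper does, is to carry out the computation on $\tilde B_{r,n}\setminus B_{\delta_k}$ and pass to the limit $k\to\infty$. This requires two further ingredients you omit: (i) a choice of the sequence $\delta_k\to0$ (by a mean--value/coarea argument from $\int_{B_{r_n}}(|\nabla u_n|^2+|f|u_n^2)\,dx<\infty$) along which the extra boundary terms $\delta_k\int_{\partial B_{\delta_k}}|\nabla u_n|^2\,dS$ and $\delta_k\int_{\partial B_{\delta_k}}|f|u_n^2\,dS$ tend to zero; and (ii) a justification that the volume integrals converge, namely that $fu_n(x\cdot\nabla u_n)\in L^1(B_r)$ under \eqref{xi0}--\eqref{xi} (this uses the Hardy inequality \eqref{Hardy}), respectively that $(\nabla f\cdot x+(N+1)f)u_n^2\in L^1(B_r)$ under \eqref{eta0}--\eqref{eta}. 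Without the excision and these integrability checks, the identification of the remainder with $\mathcal R(r,u_n)$ is not ``the definition'' but a limit that has to be proved. The rest of your argument (the boundary bookkeeping on $\tilde\gamma_{r,n}$ and $\tilde S_{r,n}$, and the replacement of $\tilde S_{r,n}$ by $\partial B_r$ using the trivial extension of $u_n$) is correct and matches the paper.
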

\begin{proof}
  Since $u_n$ solves \eqref{eq:CPu_n} in the domain
  $\tilde{B}_{r_0,n}$, which satisfies the exterior ball condition, and
  $fu_n\in L^2_{\text{loc}}(\tilde{B}_{r_0,n}\setminus\{0\})$, by
  elliptic regularity theory (see \cite{Adolf}) we have that
  $u_n\in H^2(\tilde{B}_{r,n}\setminus B_\delta)$ for all
  $r\in (0,r_0)$, $n$ sufficiently large and all $\delta<r_n$, where
  $r_n$ is such that $B_{r_n}\subset \tilde{B}_{r,n}$. Since
\begin{equation*}
\int_0^{r_n}\biggl[\int_{\partial B_r}\big(|\nabla u_n|^2+|f|u_n^2\big)\,dS\biggr]\, dr=\int_{B_{r_n}}
\big(|\nabla u_n|^2 +|f|u_n^2\big)\,dx<+\infty,
\end{equation*}
there exists a sequence $\{\delta_k\}_{k\in\mathbb{N}}\subset (0,r_n)$ such that $\lim_{k\rightarrow\infty}\delta_k=0$ and 
\begin{equation}\label{deltak}
\delta_k\int_{\partial B_{\delta_k}}|\nabla u_n|^2\,dS\rightarrow 0,
\quad 
\delta_k\int_{\partial B_{\delta_k}}|f|u_n^2\,dS\rightarrow 0
\quad \text{as $k\rightarrow\infty$}.
\end{equation}
Testing \eqref{eq:CPu_n} with $x\cdot\nabla u_n$ and integrating over $\tilde{B}_{r,n}\setminus B_{\delta_k}$, we obtain that
\begin{equation}\label{xnablaun1}
-\int_{\tilde{B}_{r,n}\setminus B_{\delta_k}}\Delta u_n(x\cdot\nabla u_n)\,dx=\int_{\tilde{B}_{r,n}\setminus B_{\delta_k}}fu_n(x\cdot\nabla u_n)\,dx.
\end{equation}
 Integration by parts allows us to rewrite the first term in
\eqref{xnablaun1} as 
\begin{equation}\label{xnablaun2}
\begin{split}
-\int_{\tilde{B}_{r,n}\setminus B_{\delta_k}}\Delta u_n(x\cdot\nabla u_n)\,dx=&\int_{\tilde{B}_{r,n}\setminus B_\delta}\nabla u_n\cdot\nabla(x\cdot \nabla u_n)\,dx-r\int_{\tilde{S}_{r,n}}\biggl|\frac{\partial u_n}{\partial\nu}\biggr|^2\,dS\\
&-\int_{\tilde{\gamma}_{r,n}}\biggl|\frac{\partial u_n}{\partial\nu}\biggr|^2 x\cdot\nu\,dS+\delta_k\int_{\partial B_{\delta_k}}\biggl|\frac{\partial u_n}{\partial\nu}\biggr|^2\,dS,
\end{split}
\end{equation}
where we used that $x=r\nu$ on $\tilde{S}_{r,n}$ and that the
tangential component of $\nabla u_n$ on $\tilde{\gamma}_{r,n}$ equals
zero, thus $\nabla u_n=\frac{\partial u_n}{\partial \nu}\nu$ on
$\tilde{\gamma}_{r,n}$. Furthermore, by direct calculations, the first term in \eqref{xnablaun2} can be rewritten as 
\begin{equation}\label{xnablaun3}
\begin{split}
\int_{\tilde{B}_{r,n}\setminus B_{\delta_k}}\nabla u_n\cdot\nabla(x\cdot \nabla u_n)\,dx=&-\frac{N-1}{2}\int_{\tilde{B}_{r,n}\setminus B_{\delta_k}}|\nabla u_n|^2\,dx+\frac{r}{2}\int_{\tilde{S}_{r,n}}|\nabla u_n|^2\,dS\\
&+\frac{1}{2}\int_{\tilde{\gamma}_{r,n}}\biggl|\frac{\partial u_n}{\partial\nu}\biggr|^2 x\cdot\nu\,dS-\frac{\delta_k}{2}\int_{\partial B_{\delta_k}}|\nabla u_n|^2\,dS.
\end{split}
\end{equation}
Taking into account \eqref{xnablaun1}, \eqref{xnablaun2} and \eqref{xnablaun3}, we obtain that
\begin{multline}\label{eq:13}
  -\frac{N-1}{2}\int_{\tilde{B}_{r,n}\setminus B_{\delta_k}}|\nabla
  u_n|^2\,dx+\frac{r}{2}\int_{\tilde{S}_{r,n}}|\nabla
  u_n|^2\,dS-\frac{1}{2}\int_{\tilde{\gamma}_{r,n}}\biggl|\frac{\partial
    u_n}{\partial \nu}\biggr|^2 x\cdot\nu\,dS
  -r\int_{\tilde{S}_{r,n}}\biggl|\frac{\partial u_n}{\partial
    \nu}\biggr|^2\,dS\\
  -\frac{\delta_k}{2}\int_{\partial B_{\delta_k}}|\nabla u_n|^2\,dS
  +\delta_k\int_{\partial B_{\delta_k}}\biggl|\frac{\partial
    u_n}{\partial\nu}\biggr|^2\,dS-\int_{\tilde{B}_{r,n}\setminus
    B_{\delta_k}}fu_n(x\cdot\nabla u_n)\,dx=0.
\end{multline}
Under assumptions \eqref{xi0}-\eqref{xi}, the Hardy inequality
\eqref{Hardy} implies that $f\,u_n(x\cdot\nabla u_n)\in
L^1(B_r)$ and hence 
\begin{equation}\label{eq:14}
\lim_{k\to\infty}
\int_{\tilde{B}_{r,n}\setminus
    B_{\delta_k}}fu_n(x\cdot\nabla
  u_n)\,dx=
\lim_{k\to\infty}
\int_{B_r\setminus
    B_{\delta_k}}fu_n(x\cdot\nabla
  u_n)\,dx=\int_{B_r}fu_n(x\cdot\nabla u_n)\,dx.
\end{equation}
On the other hand, if \eqref{eta0}-\eqref{eta} hold, 
 we can use the Divergence Theorem to obtain that
\begin{multline}\label{eq:9}
  \int_{\tilde{B}_{r,n}\setminus
    B_{\delta_k}}fu_n(x\cdot\nabla u_n)\,dx\\=
\frac{r}{2}\int_{\tilde{S}_{r,n}}fu_n^2\,dS-\frac{1}{2}\int_{\tilde{B}_{r,n}\setminus
    B_{\delta_k}}\big(\nabla f\cdot x+(N+1)f\big)u_n^2\,dx-
\frac{\delta_k}{2}\int_{\partial B_{\delta_k}}fu_n^2\,dS\\=
\frac{r}{2}\int_{\partial B_r}fu_n^2\,dS-\frac{1}{2}\int_{B_r\setminus
    B_{\delta_k}}\big(\nabla f\cdot x+(N+1)f\big)u_n^2\,dx-
\frac{\delta_k}{2}\int_{\partial B_{\delta_k}}fu_n^2\,dS. 
\end{multline}
Since, under assumptions \eqref{eta0}-\eqref{eta}, $\big(\nabla f\cdot x+(N+1)f\big)u_n^2 \in
L^1(B_r)$, we can pass to the limit as $k\to\infty$ in
\eqref{eq:9} taking into account also \eqref{deltak}, thus obtaining
that 
\begin{equation}\label{eq:16}
\lim_{k\to\infty}
\int_{\tilde{B}_{r,n}\setminus
    B_{\delta_k}}fu_n(x\cdot\nabla
  u_n)\,dx=\frac{r}{2}\int_{\partial B_r}fu_n^2\,dS-\frac{1}{2}\int_{B_r}
\big(\nabla f\cdot x+(N+1)f\big)u_n^2\,dx.
\end{equation}
Letting $k\rightarrow +\infty$ in \eqref{eq:13}, by \eqref{deltak},
\eqref{eq:14}, and \eqref{eq:16}, we obtain
\eqref{pohozaevun}. 
\end{proof}
Combining Lemma \ref{LemmaPohoz} with the fact that the 
 domains $\tilde{B}_{r,n}$ (defined as in \eqref{Btilde}) are
 star-shaped with respect to the origin, we deduce  the following inequality. 
\begin{cor} 
Let $0<r< r_0$. 
 There exists $n_0=n_0(r)\in\mathbb{N}\setminus\{0\}$ such that, for  all $n\geq
n_0$, 
\begin{equation}\label{pohozinequality}
  -\frac{N-1}{2}\int_{\tilde{B}_{r,n}}|\nabla
  u_n|^2\,dx+\frac{r}{2}\int_{\tilde{S}_{r,n}}|\nabla
  u_n|^2\,dS-r\int_{\tilde{S}_{r,n}}\biggl|\frac{\partial
    u_n}{\partial \nu}\biggr|^2\,dS-
\mathcal R(r,u_n)\geq 0. 
\end{equation}
\end{cor}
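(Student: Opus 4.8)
The plan is to read off \eqref{pohozinequality} directly from the Pohozaev identity \eqref{pohozaevun} of Lemma \ref{LemmaPohoz}, by simply discarding the boundary term carried by the crack face $\tilde\gamma_{r,n}$, whose sign is pinned down by the star-shapedness of the approximating domain.

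First I would fix $0<r<r_0$ and take $n_0=n_0(r)$ and $n\geq n_0$ as in Lemma \ref{LemmaPohoz}, so that \eqref{pohozaevun} holds. Then I would invoke Lemma \ref{l:stellato}: the set $\tilde B_{r,n}$ is star-shaped with respect to the origin, that is $x\cdot\nu\geq0$ for a.e. $x\in\partial\tilde B_{r,n}$, and in particular for a.e. $x\in\tilde\gamma_{r,n}$. Hence the integrand $\left|\frac{\partial u_n}{\partial\nu}\right|^2 x\cdot\nu$ is nonnegative a.e. on $\tilde\gamma_{r,n}$, so that
\[
\frac12\int_{\tilde\gamma_{r,n}}\left|\frac{\partial u_n}{\partial\nu}\right|^2 x\cdot\nu\,dS\ \geq\ 0 .
\]

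Finally I would rewrite \eqref{pohozaevun} by moving the crack term to the right-hand side, namely
\[
-\frac{N-1}{2}\int_{\tilde B_{r,n}}|\nabla u_n|^2\,dx+\frac r2\int_{\tilde S_{r,n}}|\nabla u_n|^2\,dS-r\int_{\tilde S_{r,n}}\left|\frac{\partial u_n}{\partial\nu}\right|^2\,dS-\mathcal R(r,u_n)=\frac12\int_{\tilde\gamma_{r,n}}\left|\frac{\partial u_n}{\partial\nu}\right|^2 x\cdot\nu\,dS ,
\]
and the nonnegativity of the right-hand side established in the previous step would yield \eqref{pohozinequality} for all $n\geq n_0(r)$.

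There is essentially no obstacle here: beyond \eqref{pohozaevun} itself, already proved in Lemma \ref{LemmaPohoz}, the only ingredient is the sign of $x\cdot\nu$ on $\tilde\gamma_{r,n}$, which is exactly the content of Lemma \ref{l:stellato} and ultimately rests on assumption \eqref{nablaggeq0} on $g$ together with property \eqref{eq:5} of the functions $f_n$. This is in fact the whole point of replacing $\Omega\setminus\Gamma$ by the regularized, star-shaped domains $\tilde B_{r,n}$: on the genuinely cracked domain the field $x\cdot\nabla u$ need not be an admissible test function, whereas on $\tilde B_{r,n}$ the extra crack term produced by the integration by parts comes with a favourable sign and can simply be dropped.
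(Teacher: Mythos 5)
Your proposal is correct and coincides with the paper's own argument: the left-hand side of \eqref{pohozinequality} equals $\frac{1}{2}\int_{\tilde{\gamma}_{r,n}}\bigl|\frac{\partial u_n}{\partial \nu}\bigr|^2 x\cdot\nu\,dS$ by \eqref{pohozaevun}, and this is non-negative since $x\cdot\nu\geq 0$ on $\tilde{\gamma}_{r,n}$ by Lemma \ref{l:stellato}. Nothing further is needed.
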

\begin{proof}
 In view of \eqref{pohozaevun}, the left-hand side of
\eqref{pohozinequality} is equal to
$\frac{1}{2}\int_{\tilde{\gamma}_{r,n}}\big|\frac{\partial
  u_n}{\partial \nu}\big|^2 x\cdot\nu\,dS$, which is in fact
non-negative, 
since $x\cdot\nu\geq 0$ on $\tilde{\gamma}_{r,n}$ by Lemma \ref{l:stellato}. 
\end{proof}
Passing to the limit in \eqref{pohozinequality}  as $n\to\infty$, 
a similar inequality can be derived for $u$.
\begin{prop}
Let $u$ solve \eqref{eq:CPu},   with $f$ satisfying
  either \eqref{xi0}-\eqref{xi} or
  \eqref{eta0}-\eqref{eta}.  Then, for a.e. $r\in (0,r_0)$, we have that
\begin{equation}\label{pohoz}
  -\frac{N-1}{2}\int_{B_r}|\nabla u|^2\,dx+\frac{r}{2}\int_{\partial
    B_r} |\nabla u|^2 \,dS- r\int_{\partial B_r} \biggl|\frac{\partial
    u}{\partial \nu}\biggr|^2 \,dS-\mathcal R(r,u)\geq 0
\end{equation}
and
\begin{equation}\label{intgrad}
\int_{B_r}|\nabla u|^2\,dx=\int_{B_r} fu^2\,dx+ \int_{\partial B_r}u\frac{\partial u}{\partial\nu}\,dS.
\end{equation}
\end{prop}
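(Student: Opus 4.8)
The plan is to deduce \eqref{pohoz} by letting $n\to\infty$ in the Pohozaev–type inequality \eqref{pohozinequality} for the approximating solutions $u_n$, and to obtain the energy identity \eqref{intgrad} by testing the equation for $u$ directly (or, equivalently, by passing to the limit in the corresponding identity for $u_n$); in both cases the engine is the strong convergence $u_n\to u$ in $H^1(B_{r_0})$ from Proposition \ref{unconvu}.

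I would first rewrite the quantities in \eqref{pohozinequality} as integrals over $B_r$ and $\partial B_r$. Since $v_n\in H^1_0(\tilde B_{r_0,n})$ and $g_n\equiv0$ on $B_{r_0}\setminus\tilde B_{r_0,n}$ by Remark \ref{rem:g_n_zero}, the zero–extension of $u_n$ vanishes together with its gradient on the open set $B_{r_0}\setminus\overline{\tilde B_{r_0,n}}$; hence $\int_{\tilde B_{r,n}}|\nabla u_n|^2\,dx=\int_{B_r}|\nabla u_n|^2\,dx$, and for a.e.\ $r\in(0,r_0)$ the traces of $u_n$ and of $|\nabla u_n|$ on $\partial B_r\setminus\tilde S_{r,n}$ vanish, so that $\int_{\tilde S_{r,n}}|\nabla u_n|^2\,dS=\int_{\partial B_r}|\nabla u_n|^2\,dS$ and $\int_{\tilde S_{r,n}}|\partial_\nu u_n|^2\,dS=\int_{\partial B_r}|\partial_\nu u_n|^2\,dS$ (using that the outer normal to $\tilde B_{r,n}$ on $\tilde S_{r,n}$ is $x/r$). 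For \eqref{intgrad} I would test the equation for $u_n$ with $u_n$ over $\tilde B_{r,n}\setminus B_{\delta_k}$ and let $\delta_k\to0$ as in the proof of Lemma \ref{LemmaPohoz}, using that $u_n=g_n=0$ on $\tilde\gamma_{r,n}$, to get $\int_{B_r}|\nabla u_n|^2\,dx=\int_{B_r}fu_n^2\,dx+\int_{\partial B_r}u_n\,\partial_\nu u_n\,dS$ for a.e.\ $r$ and $n$ large. (Alternatively, \eqref{intgrad} follows at once from the weak formulation of \eqref{eq:CPu} by inserting the test function $u\,\eta_\epsilon$, with $\eta_\epsilon$ a radial Lipschitz cutoff supported in $B_{r+\epsilon}$, equal to $1$ on $B_r$, and letting $\epsilon\to0$ at a.e.\ $r$.)

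Then I would pass to the limit. Since $u_n\to u$ in $H^1(B_{r_0})$, the coarea formula gives $\int_0^{r_0}\big(\int_{\partial B_r}(|u_n-u|^2+|\nabla(u_n-u)|^2)\,dS\big)\,dr\to0$, so along one subsequence $n_k$, independent of $r$, we have for a.e.\ $r\in(0,r_0)$ that $u_{n_k}\to u$ in $H^1(\partial B_r)$ and $\partial_\nu u_{n_k}\to\partial_\nu u$ in $L^2(\partial B_r)$; consequently $\int_{\partial B_r}|\nabla u_{n_k}|^2\,dS$, $\int_{\partial B_r}|\partial_\nu u_{n_k}|^2\,dS$, $\int_{\partial B_r}u_{n_k}\,\partial_\nu u_{n_k}\,dS$ and $\int_{\partial B_r}fu_{n_k}^2\,dS$ (the last since $f$ is bounded on $\partial B_r$) converge to the corresponding quantities for $u$. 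The bulk term $\int_{B_r}|\nabla u_{n_k}|^2\,dx$ converges by strong $H^1$ convergence, while $\int_{B_r}fu_{n_k}^2\,dx\to\int_{B_r}fu^2\,dx$ and $\mathcal R(r,u_{n_k})\to\mathcal R(r,u)$ follow from the same convergence plus the weighted bounds: under \eqref{xi0}--\eqref{xi} the Hardy inequality \eqref{Hardy} shows $|f|^{1/2}u_{n_k}\to|f|^{1/2}u$ and $|f|^{1/2}(x\cdot\nabla u_{n_k})\to|f|^{1/2}(x\cdot\nabla u)$ in $L^2(B_r)$, so their product integrates to the limit; under \eqref{eta0}--\eqref{eta} the definition \eqref{eta} of $\eta$ gives the analogous $L^1(B_r)$–convergence of $|f|u_{n_k}^2$ and $|\nabla f\cdot x|u_{n_k}^2$ (where the a.e.-$r$ control of $\int_{\partial B_r}|u_{n_k}-u|^2$ is exactly what is used). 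Inserting these limits into \eqref{pohozinequality} rewritten over $B_r$, $\partial B_r$, and into the energy identity for $u_{n_k}$, yields \eqref{pohoz} and \eqref{intgrad} for a.e.\ $r\in(0,r_0)$.

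The main obstacle is the convergence of the surface integrals on $\partial B_r$: no $H^2$ bound on $u_n$ uniform in $n$ is available near $\partial B_r$, so these integrals cannot be controlled for every $r$, and it is the coarea/Fubini argument — extracting a single subsequence along which all the needed $\partial B_r$–convergences hold for a.e.\ $r$ — that makes the limit passage legitimate, and that also forces the conclusion to be stated only for a.e.\ $r$. Everything else is bookkeeping with the weighted $L^2$–estimates already contained in Lemmas \ref{lemmacoerc} and \ref{lemmaHardy} and in \eqref{eta}.
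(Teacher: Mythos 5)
Your proposal is correct and follows essentially the same route as the paper: both pass to the limit in \eqref{pohozinequality} using the strong convergence $u_n\to u$ in $H^1(B_{r_0})$ from Proposition \ref{unconvu}, extract a single subsequence via the Fubini/coarea argument so that the $\partial B_r$-integrals of $|\nabla u_n|^2$ and $|\partial_\nu u_n|^2$ converge for a.e.\ $r$, handle $\mathcal R(r,u_n)$ with the Hardy inequality \eqref{Hardy} under \eqref{xi0}--\eqref{xi} and with the definition \eqref{eta} under \eqref{eta0}--\eqref{eta}, and obtain \eqref{intgrad} by testing the approximating equation with $u_n$ and passing to the limit. Your identification of the a.e.-$r$ boundary convergence as the key obstruction matches the paper's treatment exactly.
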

\begin{proof}
In order to prove \eqref{pohoz}, we pass to the limit inside inequality \eqref{pohozinequality}. As regards the first term, it is sufficient to observe that  
\begin{equation*}
\int_{\tilde{B}_{r,n}}|\nabla u_n|^2 dx=\int_{B_r}|\nabla u_n|^2 dx\rightarrow\int_{B_r}|\nabla u|^2 dx\quad\text{as $n\rightarrow\infty$},
\end{equation*} 
for each fixed $r\in (0,r_0)$, as a consequence of Proposition
\ref{unconvu}. In order to deal with the second term, we observe that,
by strong $H^1$-convergence of $u_n$ to $u$,
\begin{equation}\label{Bt}
\lim_{n\rightarrow +\infty}\int_0^{r_0}\biggl(\int_{\partial B_r}|\nabla(u_n-u)|^2\,dS\biggr)\,dr= 0.
\end{equation}
Letting
\[
F_n(r)=\int_{\partial B_r}|\nabla(u_n-u)|^2\,dS,
\] 
\eqref{Bt} implies that $F_n\rightarrow 0$ in $L^1(0,r_0)$. Then there exists a subsequence $F_{n_k}$ such that $F_{n_k}(r)\rightarrow 0$ for a.e. $r\in (0,r_0)$, hence 
\[
\int_{\tilde{S}_{r,n_k}}|\nabla u_{n_k}|^2\,dS=\int_{\partial B_r}|\nabla u_{n_k}|^2\,dS \rightarrow \int_{\partial B_r}|\nabla u|^2\,dS\quad\text{as $k\rightarrow\infty$}\]
for a.e. $r\in (0,r_0)$. In a similar way, we obtain that 
\[
\int_{\tilde{S}_{r,n_k}}\biggl|\frac{\partial u_{n_k}}{\partial\nu}\biggr|^2 dS\rightarrow\int_{\partial B_r}\biggl|\frac{\partial u}{\partial\nu}\biggr|^2 dS\quad\text{as $k\rightarrow\infty$}.
\]
It remains to prove the convergence of $\mathcal R(r,u_n)$. Under the set of assumptions
\eqref{xi0}-\eqref{xi}, we first write
\begin{equation}\label{eq:7}
  \begin{split}
    \int_{B_r}|fu_n(x\cdot \nabla u_n)&-fu(x\cdot \nabla
    u)|dx=\int_{B_r} |f(u_n-u)(x\cdot \nabla u_n)-fux\cdot\nabla
    (u-u_n)| dx\\
& \leq \int_{B_r} |f(u_n-u)(x\cdot \nabla u_n)| dx +
\int_{B_r} |fux\cdot\nabla
    (u-u_n)| dx.
  \end{split}
\end{equation}
The H\"{o}lder inequality, \eqref{Hardy}, and Proposition \ref{unconvu}
imply that
\begin{multline*}
\int_{B_r}|f(u_n-u)(x\cdot\nabla u_n) |dx \leq \xi_f(r)\biggl(\int_{B_r}\frac{|u_n-u|^2}{|x|^2}dx\biggr)^{1/2}\biggl(\int_{B_r} |\nabla u_n|^2 dx\biggr)^{1/2}\\
\leq \frac{2}{N-1}\xi_f(r)\biggl(\int_{B_r}|\nabla (u_n-u)|^2 dx+ \frac{N-1}{2r}\int_{\partial B_r}|u_n-u|^2 dS\biggr)^{1/2}\biggl(\int_{B_r} |\nabla u_n|^2 dx\biggr)^{1/2}\rightarrow 0
\end{multline*}
and
\begin{multline*}
\int_{B_r}|fux\cdot\nabla (u_n-u)|dx \leq \xi_f(r)\biggl(\int_{B_r}\frac{|u(x)|^2}{|x|^2}dx\biggr)^{1/2}\biggl(\int_{B_r} |\nabla (u_n-u)|^2 dx\biggr)^{1/2}\\
\leq \frac{2}{N-1}\xi_f(r)\biggl(\int_{B_r}|\nabla u|^2 dx+ \frac{N-1}{2r}\int_{\partial B_r}|u|^2 dS\biggr)^{1/2}\biggl(\int_{B_r} |\nabla (u_n-u)|^2 dx\biggr)^{1/2}\rightarrow 0
\end{multline*}
as $n\rightarrow\infty$, for a.e. $r\in (0,r_0)$, since $\xi_f(r)$ is
finite a.e. as a consequence of assumption \eqref{xiL1}. 
Hence, from \eqref{eq:7} we deduce that
\begin{equation}\label{eq:8}
\lim_{n\to\infty}\mathcal R(r,u_n)=\mathcal R(r,u)
  \end{equation}
under assumptions \eqref{xi0}-\eqref{xi}.  To prove \eqref{eq:8} 
under assumptions \eqref{eta0}-\eqref{eta},
we first use Proposition \ref{unconvu} and the H\"{o}lder inequality
to observe that 
\begin{equation*}
\begin{split}
&\bigg|\int_{B_r}[\nabla f\cdot x+(N+1)f](u_n^2-u^2)\,dx \bigg|\\
&\leq
\biggl(\int_{B_r}(
|\nabla f\cdot
x|+(N+1)|f|)|u_n-u|^2\,dx\biggr)^{\frac{1}{2}}\biggl(\int_{B_r}
(
|\nabla f\cdot x|+(N+1)|f|)|u_n+u|^2\,dx\biggr)^{\frac{1}{2}}\\
&\leq (\eta(r,\nabla f\cdot x)+
(N+1) \eta(r,f))
\biggl(\int_{B_r}|\nabla(u_n-u)|^2\,dx+\tfrac{N-1}{2r}\int_{\partial B_r}|u_n-u|^2\,dS\biggr)^{\frac{1}{2}}\\
&\hskip6cm\cdot\biggl(\int_{B_r}|\nabla(u_n+u)|^2\,dx+\tfrac{N-1}{2r}\int_{\partial B_r}|u_n+u|^2\,dS\biggr)^{\frac{1}{2}}\rightarrow 0,
\end{split}
\end{equation*}
as $n\rightarrow\infty$, for a.e. $r\in (0,r_0)$, since
$\eta(r,\nabla f\cdot x)$ and $\eta(r,f) $ are finite a.e. as a
consequence of assumptions \eqref{etanablafL1} and \eqref{etaL1} and
$\{u_n+u\}_n$ is bounded in $H^1(B_r)$ for every $r\in
(0,r_0)$.
Furthermore, by the fact that $f$ is bounded far from the origin and
the compactness of the trace map from $H^1(B_r)$ to
$L^2(\partial B_r)$, it follows that
\begin{equation*}
\int_{\partial B_r}fu_n^2\,dS\rightarrow \int_{\partial B_r}fu^2\,dS,
\end{equation*} 
for a.e. $r\in (0,r_0)$.
 Hence, passing  to the limit in $\mathcal R(r,u_n)$ we conclude the first part of the proof. 

 Finally \eqref{intgrad}
follows by testing \eqref{eq:CPu_n} with $u_n$ itself and passing
to the limit arguing as above.
\end{proof}
\section{The Almgren type frequency function}\label{sec:almgr-type-freq}
Let $u\in H^1_\Gamma(B_{\hat{R}})$ be a non trivial solution to \eqref{eq:CPu}. For every $r\in (0,\hat{R})$ we define 
\begin{equation}\label{D}
\mathcal{D}(r)=r^{1-N}\int_{B_r}\big(|\nabla u|^2-fu^2\big)\,dx
\end{equation}
and 
\begin{equation}\label{H}
\mathcal{H}(r)=r^{-N}\int_{\partial B_r} u^2 \,dS.
\end{equation}
In the following lemma we compute the derivative of the function $\mathcal{H}$.
\begin{lem}\label{LemH'}
We have that $\mathcal{H}\in W^{1,1}_{\mathrm{loc}}(0,\hat{R})$ and
\begin{equation}\label{H'}
\mathcal{H}'(r)=2r^{-N}\int_{\partial B_r}u\frac{\partial u}{\partial\nu}\,dS
\end{equation}
in a distributional sense and for a.e. $r\in (0,\hat{R})$. Furthermore 
\begin{equation}\label{H'2}
\mathcal{H}'(r)=\frac{2}{r}\mathcal{D}(r)\quad \text{for a.e. }r\in (0,\hat{R}).
\end{equation}
\end{lem}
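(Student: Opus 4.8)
The plan is to establish the three assertions in sequence: first the distributional formula \eqref{H'} together with the $W^{1,1}_{\mathrm{loc}}$ regularity of $\mathcal H$, then the pointwise a.e. identity, and finally the link \eqref{H'2} with $\mathcal D$. First I would rewrite $\mathcal H$ via the coarea change of variables: setting $r\mapsto \int_{\partial B_1} u(r\theta)^2\,dS(\theta)$, one has $\mathcal H(r) = \int_{\partial B_1} u(r\theta)^2\, dS(\theta)$, which exhibits $\mathcal H$ as an average on the fixed sphere $\partial B_1$. Since $u\in H^1(B_{\hat R})$, for a.e. $\theta$ the map $r\mapsto u(r\theta)$ is absolutely continuous on compact subintervals of $(0,\hat R)$, and differentiating under the integral sign (justified by dominated convergence using that $\partial_r u(r\theta) = \nabla u(r\theta)\cdot\theta$ and $\int_{K\times\partial B_1} |\nabla u(r\theta)|\,|u(r\theta)|\, r^{N}\,\frac{dr\,dS}{r^{N}}<\infty$ on compact $K\subset(0,\hat R)$ by Cauchy--Schwarz and the trace/Fubini theorems) gives $\frac{d}{dr}\mathcal H(r) = 2\int_{\partial B_1} u(r\theta)\,\nabla u(r\theta)\cdot\theta\, dS(\theta)$. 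Changing variables back to $\partial B_r$, where $\theta = x/r = \nu$ and $dS(\theta) = r^{1-N}\,dS(x)$, yields exactly \eqref{H'}. The same estimate shows the right-hand side is in $L^1_{\mathrm{loc}}(0,\hat R)$, so $\mathcal H\in W^{1,1}_{\mathrm{loc}}(0,\hat R)$ and the formula holds both distributionally and a.e.

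Alternatively — and perhaps cleaner to write — I would argue directly at the level of distributions: for $\varphi\in C^\infty_c(0,\hat R)$, consider $\int_0^{\hat R} \mathcal H(r)\varphi'(r)\,dr = \int_{B_{\hat R}} u(x)^2\, |x|^{-N}\varphi'(|x|)\,dx$ by the coarea formula, then recognize $|x|^{-N}\varphi'(|x|) = \mathrm{div}\big(|x|^{-N}\varphi(|x|)\,x\big) / 1$ after computing that $\mathrm{div}(|x|^{-N}\varphi(|x|)x) = |x|^{-N}\varphi'(|x|)|x| \cdot |x|^{-1}\cdot\ldots$; more precisely $\mathrm{div}(x\,\psi(|x|)) = (N+1)\psi(|x|) + |x|\psi'(|x|)$, so choosing $\psi(t) = t^{-N}\varphi(t)$ gives $\mathrm{div}(x\,\psi(|x|)) = |x|^{-N}\varphi'(|x|)$. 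Integrating by parts (the vector field $x\psi(|x|)u^2$ is compactly supported away from $\partial B_{\hat R}$ and Lipschitz times $H^1$, hence admissible) transfers the derivative onto $u^2$, producing $-\int_{B_{\hat R}} 2u\,\nabla u\cdot x\,|x|^{-N}\varphi(|x|)\,dx = -\int_0^{\hat R}\Big(2r^{-N}\int_{\partial B_r} u\,\tfrac{\partial u}{\partial \nu}\,dS\Big)\varphi(r)\,dr$, which is \eqref{H'}.

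For \eqref{H'2}, I would apply the integration-by-parts identity \eqref{intgrad} already proved in the previous section: $\int_{\partial B_r} u\,\frac{\partial u}{\partial\nu}\,dS = \int_{B_r}|\nabla u|^2\,dx - \int_{B_r} fu^2\,dx$. Substituting into \eqref{H'} gives $\mathcal H'(r) = 2r^{-N}\big(\int_{B_r}|\nabla u|^2\,dx - \int_{B_r} fu^2\,dx\big) = \tfrac{2}{r}\cdot r^{1-N}\int_{B_r}(|\nabla u|^2 - fu^2)\,dx = \tfrac{2}{r}\mathcal D(r)$, valid for a.e. $r$ since \eqref{intgrad} holds a.e. The main obstacle is purely technical: carefully justifying the differentiation under the integral sign / the integration by parts with only $H^1$ regularity of $u$ near the origin, which is handled by the Cauchy--Schwarz bound $\int_{B_r}|u||\nabla u||x|^{-1}\,dx \le \big(\int_{B_r}|u|^2|x|^{-2}\big)^{1/2}\big(\int_{B_r}|\nabla u|^2\big)^{1/2}$ together with the Hardy inequality \eqref{Hardy}, ensuring all integrals converge; no boundary contribution from a neighborhood of $0$ survives because the flux through $\partial B_\delta$ tends to $0$ along a suitable sequence $\delta\to 0^+$, exactly as in the proof of Lemma \ref{LemmaPohoz}.
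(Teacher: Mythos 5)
Your proposal is correct and your second (distributional) argument is essentially the paper's own proof: the paper tests $\mathcal H$ against $\phi'\in C^\infty_c(0,\hat R)$, writes the result as a volume integral against $\nabla(\phi(|x|))\cdot x$, integrates by parts to land on $2u\,\nabla u\cdot x$, and then deduces \eqref{H'2} from \eqref{H'} and \eqref{intgrad}, exactly as you do. The only slip is algebraic: with $\mathrm{div}(x\,\psi(|x|))=(N+1)\psi(|x|)+|x|\psi'(|x|)$ in $\R^{N+1}$, the choice producing $|x|^{-N}\varphi'(|x|)$ is $\psi(t)=t^{-N-1}\varphi(t)$, not $t^{-N}\varphi(t)$ (which would leave an extra term $|x|^{-N}\varphi(|x|)$); this matches the weight $|x|^{-N-1}$ appearing in the paper's computation and does not affect the validity of your argument.
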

\begin{proof}
First we observe that 
\begin{equation}\label{Hcambiodivar}
\mathcal{H}(r)= \int_{\mathbb{S}^N} |u(r\theta)|^2 \,dS.
\end{equation}
Let $\phi \in C^\infty_c(0,\hat{R})$. Since $u, \nabla u\in L^2(B_{\hat{R}})$, we obtain that
\begin{align*}
-\int_0^{\hat{R}} \mathcal{H}(r)\phi'(r)\,dr&=-\int_0^{\hat{R}}\biggl(\int_{\partial B_1} u^2(r\theta) \,dS\biggr)\phi'(r)\,dr\\
&=-\int_{B_{\hat{R}}} |x|^{-N-1} u^2(x)\nabla v(x)\cdot x \,dx=2\int_{B_{\hat{R}}} v(x)|x|^{-N-1}u\nabla u\cdot x\,dx\\
&=2\int_0^{\hat{R}} \phi(r)\biggl(\int_{\partial B_1}u (r\theta)\nabla u(r\theta)\cdot \theta\,dS\biggr)\,dr,
\end{align*}
where we set $v(x)=\phi(|x|)$. Thus we proved \eqref{H'}. Identity \eqref{H'2} follows from \eqref{H'} and \eqref{intgrad}.
\end{proof}
{We now observe that the function $\mathcal H$ is strictly
  positive in a neighbourhood of $0$.
\begin{lem}\label{H>0}
For any $r\in (0,r_0]$, we have that $\mathcal{H}(r)>0$. 
\end{lem}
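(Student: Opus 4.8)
The claim is that $\mathcal H(r) = r^{-N}\int_{\partial B_r} u^2\, dS > 0$ for every $r \in (0, r_0]$, where $u$ is a nontrivial weak solution. I would argue by contradiction: suppose $\mathcal H(r_1) = 0$ for some $r_1 \in (0, r_0]$, which forces $u \equiv 0$ on $\partial B_{r_1}$ in the trace sense, i.e. the trace of $u$ on $\partial B_{r_1}$ vanishes. The strategy is then to show that $u$ must vanish identically on $B_{r_1}$, and then propagate this to all of $B_{\hat R}$, contradicting $u \neq 0$.

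\textbf{Main steps.} First, restrict $u$ to $B_{r_1}$. Since the trace of $u$ on $\partial B_{r_1}$ is zero and $u \in H^1_\Gamma(B_{\hat R})$, the restriction $u|_{B_{r_1}}$ lies in $H^1_0(B_{r_1}) \cap H^1_\Gamma(B_{r_1})$ and is a weak solution of $-\Delta u = fu$ in $B_{r_1} \setminus \Gamma$ with zero data on the whole boundary $\partial B_{r_1} \cup (\Gamma \cap B_{r_1})$. Testing this equation with $u$ itself (legitimate since $u \in H^1_0$) gives
\begin{equation*}
\int_{B_{r_1}} |\nabla u|^2\, dx = \int_{B_{r_1}} f u^2\, dx.
\end{equation*}
Now invoke the coercivity estimate of Lemma \ref{lemmacoerc}, equation \eqref{equazcoerc}, with $r = r_1 \leq r_0$: the boundary term $\omega(r_1)\int_{\partial B_{r_1}} u^2\, dS$ vanishes because the trace of $u$ is zero there, so
\begin{equation*}
0 = \int_{B_{r_1}} \big(|\nabla u|^2 - f u^2\big)\, dx \geq \frac12 \int_{B_{r_1}} |\nabla u|^2\, dx \geq 0,
\end{equation*}
forcing $\nabla u \equiv 0$ on $B_{r_1}$, hence $u$ is constant there, and since its trace on $\partial B_{r_1}$ vanishes, $u \equiv 0$ on $B_{r_1}$. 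Second, to reach a contradiction with $u \not\equiv 0$ on $B_{\hat R}$, I would use a unique continuation / connectedness argument: $u$ solves $-\Delta u = fu$ on the connected open set $B_{\hat R} \setminus \Gamma$, with $f$ bounded away from the origin, so standard interior strong unique continuation (applicable away from the non-smooth set, since $B_{\hat R} \setminus \Gamma$ is connected — $\Gamma$ being a graph over a half-space in $\mathbb R^N$ has codimension-$2$-ish removable boundary) propagates the vanishing on $B_{r_1}$ to all of $B_{\hat R} \setminus \Gamma$, hence to $B_{\hat R}$, the desired contradiction.

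\textbf{The main obstacle.} The delicate point is the propagation step: one must justify that vanishing of $u$ on the small ball $B_{r_1}$ forces $u \equiv 0$ globally, using that $B_{\hat R} \setminus \Gamma$ is connected and that classical interior unique continuation (e.g.\ from \cite{Garofalo}, or a Carleman estimate, since $f \in L^\infty_{\mathrm{loc}}(B_{\hat R} \setminus \{0\})$ away from $0$ and in particular on any annulus avoiding $0$) applies on this connected region. Alternatively, and perhaps more cleanly within the spirit of this paper, one may simply observe that the coercivity argument above already shows: if $\mathcal H(r_1) = 0$ then $u \equiv 0$ on $B_{r_1}$, and then note that $\mathcal H(r) = 0$ for $r$ in a right-neighbourhood forces, by the same computation applied on larger balls as long as the trace stays zero, a maximal such $r_1$; but handling the endpoint carefully is where one needs the unique continuation input. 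I expect the write-up to take the pragmatic route: prove $u\equiv 0$ on $B_{r_1}$ by coercivity, then cite interior unique continuation for the second-order elliptic equation on the connected set $B_{\hat R}\setminus\Gamma$ to conclude $u\equiv 0$, contradicting the standing assumption $u\not\equiv 0$.
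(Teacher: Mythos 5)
Your proposal is correct and follows essentially the same route as the paper: assume $\mathcal H(r_1)=0$, observe that then $u\in H^1_0(B_{r_1}\setminus\Gamma)$ so it can be used as a test function, combine the resulting identity with the coercivity estimate of Lemma \ref{lemmacoerc} to force $u\equiv 0$ in $B_{r_1}$, and then invoke classical unique continuation for second order elliptic equations with locally bounded coefficients (the paper cites \cite{Wolff}) to propagate the vanishing to all of $B_{\hat R}$. The only cosmetic difference is that the paper concludes $u\equiv0$ on $B_{r_1}$ via the Hardy inequality of Lemma \ref{lemmaHardy}, whereas you use ``zero gradient plus zero trace,'' which is equally valid.
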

\begin{proof}
  Assume by contradiction that there exists $r_1\in (0,r_0]$ such that
  $\mathcal{H}(r_1)=0$, so that  the trace of $u$ on $\partial
  B_{r_1}$ is null and hence $u\in
  H^1_0(B_{r_1}\setminus\Gamma)$. Then, testing \eqref{eq:CPu} with
  $u$, we obtain that 
\begin{equation}\label{barr}
\int_{B_{r_1}}|\nabla u|^2\,dx-\int_{B_{r_1}}fu^2\,dx=0.
\end{equation}
Thus, from Lemma \ref{lemmacoerc} and \eqref{barr} it follows  that 
\begin{equation*}
0=\int_{B_{r_1}}[|\nabla u|^2-fu^2]\,dx\geq \frac{1}{2}\int_{B_{r_1}}|\nabla u|^2\,dx,
\end{equation*}
which, together with Lemma \ref{lemmaHardy}, implies that $u\equiv 0$ in $B_{r_1}$. From classical unique continuation principles for second order elliptic equations with locally bounded coefficients (see e.g. \cite{Wolff}), we can conclude that $u=0$ a.e. in $B_{\hat{R}}$, a contradiction. 
\end{proof}

 Let us now  differentiate the function $\mathcal{D}$ and estimate
 from below its derivative.
\begin{lem} \label{LemD'}
The function $\mathcal{D}$ defined in \eqref{D} belongs to
$W^{1,1}_{\mathrm{loc}}(0,\hat{R})$ and 
\begin{equation}\label{D'1}
\mathcal{D}'(r)\geq 2r^{1-N} \int_{\partial B_r} \biggl|\frac{\partial u}{\partial \nu}\biggr|^2 \,dS+(N-1)r^{-N}\int_{B_r}fu^2\,dx+2r^{-N}\mathcal R(r,u)- r^{1-N}\int_{\partial B_r} fu^2\, dS
\end{equation}
for a.e. $r\in (0,r_0)$.
\end{lem}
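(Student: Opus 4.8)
The plan is to differentiate $\mathcal D(r)=r^{1-N}\int_{B_r}(|\nabla u|^2-fu^2)\,dx$ directly. Writing $\mathcal D(r)=r^{1-N}E(r)$ with $E(r)=\int_{B_r}(|\nabla u|^2-fu^2)\,dx$, we get
\[
\mathcal D'(r)=(1-N)r^{-N}E(r)+r^{1-N}E'(r),
\]
where, by the coarea formula, $E'(r)=\int_{\partial B_r}(|\nabla u|^2-fu^2)\,dS$ for a.e.\ $r$; the membership $\mathcal D\in W^{1,1}_{\mathrm{loc}}(0,\hat R)$ follows from the absolute continuity of $r\mapsto\int_{B_r}(|\nabla u|^2+|f|u^2)\,dx$ (finiteness of this integral near any $r\in(0,\hat R)$, using that $f$ is bounded away from $0$ and the Hardy/$\eta$-control near $0$, gives the needed local integrability of $E'$). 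Thus
\[
\mathcal D'(r)=-\frac{N-1}{r}\,\mathcal D(r)+r^{1-N}\int_{\partial B_r}|\nabla u|^2\,dS-r^{1-N}\int_{\partial B_r}fu^2\,dS.
\]

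Next I would substitute for $\mathcal D(r)$ inside the first term using the Pohozaev inequality \eqref{pohoz}: that inequality reads
\[
-\frac{N-1}{2}\int_{B_r}|\nabla u|^2\,dx+\frac r2\int_{\partial B_r}|\nabla u|^2\,dS-r\int_{\partial B_r}\Bigl|\frac{\partial u}{\partial\nu}\Bigr|^2\,dS-\mathcal R(r,u)\ge0,
\]
equivalently
\[
\frac{N-1}{2}\int_{B_r}|\nabla u|^2\,dx\le \frac r2\int_{\partial B_r}|\nabla u|^2\,dS-r\int_{\partial B_r}\Bigl|\frac{\partial u}{\partial\nu}\Bigr|^2\,dS-\mathcal R(r,u).
\]
Since $-\frac{N-1}{r}\mathcal D(r)=-\frac{N-1}{r^{N}}\int_{B_r}|\nabla u|^2\,dx+\frac{N-1}{r^{N}}\int_{B_r}fu^2\,dx$, I bound $-\frac{N-1}{r^N}\int_{B_r}|\nabla u|^2\,dx$ from below by $\frac{-2}{r^N}\bigl(\frac r2\int_{\partial B_r}|\nabla u|^2\,dS-r\int_{\partial B_r}|\partial_\nu u|^2\,dS-\mathcal R(r,u)\bigr)$, i.e.\ by $-r^{1-N}\int_{\partial B_r}|\nabla u|^2\,dS+2r^{1-N}\int_{\partial B_r}|\partial_\nu u|^2\,dS+2r^{-N}\mathcal R(r,u)$. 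Plugging this into the expression for $\mathcal D'(r)$, the two terms $\pm r^{1-N}\int_{\partial B_r}|\nabla u|^2\,dS$ cancel, and I am left with exactly
\[
\mathcal D'(r)\ge 2r^{1-N}\int_{\partial B_r}\Bigl|\frac{\partial u}{\partial\nu}\Bigr|^2\,dS+(N-1)r^{-N}\int_{B_r}fu^2\,dx+2r^{-N}\mathcal R(r,u)-r^{1-N}\int_{\partial B_r}fu^2\,dS,
\]
which is \eqref{D'1}.

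The main obstacle is the justification of the differentiation identity for $\mathcal D$ in the Sobolev sense, rather than the algebra above (which is a clean cancellation). Specifically, one must check that $E'(r)=\int_{\partial B_r}(|\nabla u|^2-fu^2)\,dS$ holds in $W^{1,1}_{\mathrm{loc}}$, which requires $x\mapsto|\nabla u|^2-fu^2\in L^1_{\mathrm{loc}}(B_{\hat R})$ (immediate from $u\in H^1$, $f\in L^\infty_{\mathrm{loc}}(B_{\hat R}\setminus\{0\})$, and the coercivity Lemma \ref{lemmacoerc} which controls $\int_{B_r}|f|u^2$ near $0$), together with the standard fact that a function of the form $r\mapsto\int_{B_r}\phi\,dx$ with $\phi\in L^1_{\mathrm{loc}}$ is absolutely continuous with derivative $\int_{\partial B_r}\phi\,dS$ for a.e.\ $r$. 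The only delicate point is that the boundary-integral terms $\int_{\partial B_r}|\nabla u|^2\,dS$ and $\int_{\partial B_r}fu^2\,dS$ are finite for a.e.\ $r$, but this is automatic since their integrals over $(0,r_0)$ are finite; and \eqref{pohoz} is only known for a.e.\ $r$, which is why \eqref{D'1} is likewise asserted only a.e. I would also remark that the term $2r^{-N}\mathcal R(r,u)$ and $-r^{1-N}\int_{\partial B_r}fu^2\,dS$ should be read in the sense dictated by the definition of $\mathcal R$: under \eqref{eta0}--\eqref{eta} the surface term $\frac r2\int_{\partial B_r}fu^2\,dS$ hidden inside $\mathcal R$ partially recombines with the last term of \eqref{D'1}, but since the statement keeps $\mathcal R(r,u)$ unexpanded, no further manipulation is needed here.
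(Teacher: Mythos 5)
Your proposal is correct and follows essentially the same route as the paper: differentiate $\mathcal D(r)=r^{1-N}E(r)$ via the Leibniz/coarea formula to get \eqref{D'} and then insert the Pohozaev inequality \eqref{pohoz} to bound the bulk gradient term from below, with exactly the cancellation of $\pm r^{1-N}\int_{\partial B_r}|\nabla u|^2\,dS$ that you describe. Your additional remarks on the $W^{1,1}_{\mathrm{loc}}$ justification and the a.e.\ validity are consistent with (and slightly more detailed than) the paper's terser argument.
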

\begin{proof}
We have that 
\begin{equation}\label{D'}
\mathcal{D}'(r)= (1-N)r^{-N}\int_{B_r}\big(|\nabla
u|^2-fu^2\big)\,dx+r^{1-N} \int_{\partial B_r}\big(
|\nabla u|^2-fu^2\big)\,dS
\end{equation}
for a.e. $r\in (0,r_0)$ and in the distributional sense. Combining \eqref{pohoz} and \eqref{D'}, we obtain \eqref{D'1}.
\end{proof}
Thanks to Lemma \ref{H>0}, the frequency function 
\begin{equation}\label{frequency}
\mathcal{N}\colon (0,r_0 ]\to \mathbb{R},\quad\mathcal{N}(r)=\frac{\mathcal{D}(r)}{\mathcal{H}(r)}
\end{equation}
is well defined.
Using
Lemmas \ref{LemH'}, \ref{LemD'}, 
 and \ref{lemmacoerc} 
we can estimate from below $\mathcal N$ and its derivative. 
\begin{lem} \label{LemN'}
The function $\mathcal{N}$ defined in \eqref{frequency} belongs to $W^{1,1}_{\mathrm{loc}}((0,r_0])$ and
\begin{equation}\label{eq:10}
\mathcal{N}'(r)\geq\nu_1(r)+\nu_2(r),
\end{equation}
for a.e. $r\in (0,r_0)$, where
\begin{equation*}
\nu_1(r)=\frac{2r\bigl[\bigl(\int_{\partial B_r}\bigl|\frac{\partial u}{\partial \nu}\bigr|^2 \,dS\bigr)\bigl(\int_{\partial B_r}|u|^2\,dS\bigr)-\bigl(\int_{\partial B_r}u\frac{\partial u}{\partial\nu}\,dS\bigr)^2\bigr]}{\bigl(\int_{\partial B_r}|u|^2\,dS\bigr)^2}
\end{equation*}
and
\begin{equation}\label{nu2}
\nu_2(r)=\frac{2\bigl[\frac{N-1}{2}\int_{B_r}fu^2\,dx+\mathcal R(r,u)-\frac{r}{2}\int_{\partial B_r}fu^2\,dS\bigr]}{\int_{\partial B_r}|u|^2\,dS}.
\end{equation}
 Furthermore, 
\begin{equation}\label{Nlimitatafrombelow}
\mathcal N(r)>-\frac{N-1}4\quad\text{for every }r\in(0,r_0)
\end{equation}
and, 
for every $\varepsilon>0$, there exists $r_\varepsilon>0$
such that  
\begin{equation}\label{eq:15}
\mathcal{N}(r)>-\varepsilon\quad\text{for every $r\in (0,r_\varepsilon)$},
\end{equation}
i.e. $\liminf_{r\to 0^+}\mathcal N(r)\geq0$.
\end{lem}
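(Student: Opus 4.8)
The plan is to establish the four assertions of Lemma~\ref{LemN'} in sequence, using the regularity and monotonicity-type estimates already obtained. First I would observe that $\mathcal{N}\in W^{1,1}_{\mathrm{loc}}((0,r_0])$ follows immediately from Lemma~\ref{LemH'} and Lemma~\ref{LemD'}, together with the strict positivity $\mathcal{H}(r)>0$ on $(0,r_0]$ from Lemma~\ref{H>0}: the quotient of a $W^{1,1}_{\mathrm{loc}}$ function by a $W^{1,1}_{\mathrm{loc}}$ function bounded away from zero is again $W^{1,1}_{\mathrm{loc}}$. Then I would compute $\mathcal{N}'(r) = \frac{\mathcal{D}'(r)\mathcal{H}(r) - \mathcal{D}(r)\mathcal{H}'(r)}{\mathcal{H}(r)^2}$ for a.e.\ $r$. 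Using $\mathcal{H}'(r) = \frac{2}{r}\mathcal{D}(r)$ from \eqref{H'2}, the identity $\mathcal{H}'(r) = 2r^{-N}\int_{\partial B_r} u\,\partial_\nu u\,dS$ from \eqref{H'}, the lower bound \eqref{D'1} for $\mathcal{D}'(r)$, and the expression $\mathcal{H}(r) = r^{-N}\int_{\partial B_r} u^2\,dS$, I would substitute everything in and isolate two groups of terms: the group coming from the $2r^{1-N}\int_{\partial B_r}|\partial_\nu u|^2$ term in \eqref{D'1} combined with the $-\mathcal{D}\mathcal{H}'$ contribution, which after using $\mathcal{D}(r) = \frac{r}{2}\mathcal{H}'(r) = r^{1-N}\int_{\partial B_r} u\,\partial_\nu u\,dS$ (valid via \eqref{intgrad} and \eqref{H'}) produces exactly $\nu_1(r)$; and the remaining lower-order group involving $f$, which is $\nu_2(r)$. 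This gives \eqref{eq:10}. Note $\nu_1(r)\geq 0$ by the Cauchy--Schwarz inequality on $\partial B_r$, so the only genuinely dangerous term is $\nu_2(r)$.

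For \eqref{Nlimitatafrombelow}, I would argue directly from the definitions rather than from the differential inequality. Using the coercivity estimate \eqref{equazcoerc} of Lemma~\ref{lemmacoerc}, we have for $r\in(0,r_0)$
\begin{equation*}
\mathcal{D}(r) = r^{1-N}\int_{B_r}(|\nabla u|^2 - fu^2)\,dx \geq r^{1-N}\left(\frac{1}{2}\int_{B_r}|\nabla u|^2\,dx - \omega(r)\int_{\partial B_r} u^2\,dS\right) \geq -\omega(r)\,r^{1-N}\int_{\partial B_r} u^2\,dS,
\end{equation*}
so that $\mathcal{N}(r) = \mathcal{D}(r)/\mathcal{H}(r) \geq -r\,\omega(r) > -\frac{N-1}{4}$ by the bound \eqref{eq:bound-omega}. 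This is the cleanest route and avoids integrating \eqref{eq:10}.

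For the refined bound \eqref{eq:15}, the point is that $r\,\omega(r)\to 0$ as $r\to 0^+$: indeed under \eqref{xi0}, $r\,\omega(r) = \frac{2}{N-1}\xi_f(r)\to 0$ by \eqref{xi0}, and under \eqref{eta0}, $r\,\omega(r) = \frac{N-1}{2}\eta(r,f)\to 0$ by \eqref{eta0}. Hence for every $\varepsilon>0$ there is $r_\varepsilon>0$ with $r\,\omega(r) < \varepsilon$ for $r<r_\varepsilon$, and the same chain of inequalities as above gives $\mathcal{N}(r) > -\varepsilon$. The statement $\liminf_{r\to 0^+}\mathcal{N}(r)\geq 0$ is then immediate. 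The main obstacle is purely bookkeeping in the first part: one must carefully verify that the $f$-dependent remainder terms in $\mathcal{D}'(r)$ reorganize precisely into the stated $\nu_2(r)$ and that the ``energy'' terms collapse into the Cauchy--Schwarz expression $\nu_1(r)$ after the substitutions $\mathcal{D}(r)=r^{1-N}\int_{\partial B_r}u\,\partial_\nu u\,dS$ and $\mathcal{H}(r)=r^{-N}\int_{\partial B_r}u^2\,dS$ — no deep idea is needed beyond keeping the factors of $r$ straight.
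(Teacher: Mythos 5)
Your proposal is correct and follows essentially the same route as the paper: the $W^{1,1}_{\mathrm{loc}}$ regularity from Lemmas \ref{LemH'}, \ref{H>0}, \ref{LemD'}, the quotient-rule computation combined with \eqref{H'}, \eqref{H'2} and \eqref{D'1} to isolate $\nu_1$ and $\nu_2$, and the direct coercivity bound $\mathcal N(r)\geq -r\omega(r)$ from Lemma \ref{lemmacoerc} together with \eqref{eq:bound-omega} and $r\omega(r)\to0$ for the last two assertions.
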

\begin{proof}
From Lemmas \ref{LemH'}, \ref{H>0}, and \ref{LemD'}, it follows that
$\mathcal{N}\in W^{1,1}_{\mathrm{loc}}((0,r_0])$. From \eqref{H'2} we deduce that
\begin{equation*}
\mathcal{N}'(r)=\frac{\mathcal{D}'(r)\mathcal{H}(r)-\mathcal{D}(r)\mathcal{H}'(r)}{(\mathcal{H}(r))^2}=\frac{\mathcal{D}'(r)\mathcal{H}(r)-\frac{1}{2}r(\mathcal{H}'(r))^2}{(\mathcal{H}(r))^2}
\end{equation*}
and the proof of \eqref{eq:10} easily follows from \eqref{H'} and
\eqref{D'1}. To prove \eqref{Nlimitatafrombelow} and \eqref{eq:15}, we observe that
\eqref{D} and \eqref{H}, together with Lemma \ref{lemmacoerc}, imply that
\begin{equation}\label{eq:12}
\begin{split}
\mathcal{N}(r)=\frac{\mathcal{D}(r)}{\mathcal{H}(r)}\geq \frac{r\bigl[\frac{1}{2}\int_{B_r}|\nabla u|^2 dx-\omega(r)\int_{\partial B_r} |u|^2\,dS\bigr]}{\int_{\partial B_r} |u|^2\,dS}\geq -r\omega(r)
\end{split}
\end{equation}
for every $r\in (0,r_0)$,  where $\omega$ is defined in
\eqref{eq:omega}. Then \eqref{Nlimitatafrombelow} follows directly
from \eqref{eq:bound-omega}. 
From either assumption  \eqref{xi0} or
\eqref{eta0} it follows that $\lim_{r\to 0^+}r\omega(r)=0$; hence
\eqref{eq:12} implies \eqref{eq:15}. 
\end{proof}
\begin{lem}\label{Lemstimanu2}
Let  $\nu_2$ be as in \eqref{nu2}. There exists a positive constant
$C_1>0$ 
 such that
\begin{equation}\label{estimatenu2}
|\nu_2(r)|\leq C_1\alpha(r)\biggl[\mathcal{N}(r)+\frac{N-1}{2}\biggr]
\end{equation}
for all $r\in (0,r_0)$, where 
\begin{equation}\label{eq:alpha}
 \alpha(r)=
\begin{cases}
r^{-1}\xi_f(r),&\text{under assumptions
  \eqref{xi0}-\eqref{xi}},\\
r^{-1}\Big(\eta(r,f)+\eta(r,\nabla f\cdot x)\Big), &\text{under assumptions \eqref{eta0}-\eqref{eta}}.
\end{cases}
\end{equation}
\end{lem}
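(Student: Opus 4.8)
The plan is to estimate $|\nu_2(r)|$ separately under the two sets of hypotheses, in both cases reducing matters to a control of $\int_{B_r}|\nabla u|^2\,dx$ by $\mathcal D(r)$ and $\mathcal H(r)$. The common preliminary step is that, by the coercivity estimate \eqref{equazcoerc} (using $f\le|f|$ pointwise),
\[
\tfrac12\int_{B_r}|\nabla u|^2\,dx\le \int_{B_r}\bigl(|\nabla u|^2-fu^2\bigr)\,dx+\omega(r)\int_{\partial B_r}u^2\,dS=r^{N-1}\mathcal D(r)+r^{N}\omega(r)\mathcal H(r).
\]
Writing $Q(r):=\int_{B_r}|\nabla u|^2\,dx+\tfrac{N-1}{2r}\int_{\partial B_r}u^2\,dS$, dividing by $\int_{\partial B_r}u^2\,dS=r^{N}\mathcal H(r)>0$ (Lemma \ref{H>0}) and using $r\omega(r)<\tfrac{N-1}{4}$ from \eqref{eq:bound-omega}, one obtains the ``bridge inequality''
\[
\frac{Q(r)}{\int_{\partial B_r}u^2\,dS}\le\frac2r\Bigl(\mathcal N(r)+\frac{N-1}{4}+r\omega(r)\Bigr)\le\frac2r\Bigl(\mathcal N(r)+\frac{N-1}{2}\Bigr).
\]
Since $Q(r)$ is exactly the right-hand side of the Hardy inequality \eqref{Hardy} and the denominator in the definition \eqref{eta} of $\eta$, this reduces \eqref{estimatenu2} to bounding the numerator of \eqref{nu2} by a constant times $\alpha(r)\,r\,Q(r)$.

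Under assumptions \eqref{eta0}-\eqref{eta} the numerator telescopes: substituting the explicit form of $\mathcal R(r,u)$ one finds $\tfrac{N-1}{2}\int_{B_r}fu^2\,dx+\mathcal R(r,u)-\tfrac r2\int_{\partial B_r}fu^2\,dS=-\int_{B_r}\bigl(f+\tfrac12\nabla f\cdot x\bigr)u^2\,dx$, whence $|\nu_2(r)|\le\bigl(\int_{\partial B_r}u^2\,dS\bigr)^{-1}\int_{B_r}\bigl(2|f|+|\nabla f\cdot x|\bigr)u^2\,dx$. Applying the defining inequality \eqref{eta} with $h=f$ and $h=\nabla f\cdot x$, and then the bridge inequality, gives \eqref{estimatenu2} with $\alpha(r)=r^{-1}(\eta(r,f)+\eta(r,\nabla f\cdot x))$ and, e.g., $C_1=4$.

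Under assumptions \eqref{xi0}-\eqref{xi} the main obstacle --- and the reason the lemma is stated with a general remainder $\mathcal R$ --- is that $f$ need not be (weakly) differentiable, so one cannot integrate by parts in $\mathcal R(r,u)=\int_{B_r}fu\,(x\cdot\nabla u)\,dx$ to reduce to the previous case; instead I would bound the three terms of the numerator of \eqref{nu2} directly. The first term satisfies $\tfrac{N-1}{2}\int_{B_r}|f|u^2\,dx\le\tfrac{2}{N-1}\xi_f(r)Q(r)$ by \eqref{fu^2}; the boundary term obeys $\tfrac r2\int_{\partial B_r}|f|u^2\,dS\le\tfrac{\xi_f(r)}{2r}\int_{\partial B_r}u^2\,dS\le\tfrac{\xi_f(r)}{N-1}Q(r)$ since $|f|\le\xi_f(r)/r^2$ on $\partial B_r$; and the middle term, using the pointwise bound $|fu(x\cdot\nabla u)|\le\xi_f(r)|x|^{-1}|u|\,|\nabla u|$ together with Cauchy-Schwarz and \eqref{Hardy}, satisfies
\[
\int_{B_r}|f|\,|u|\,|x\cdot\nabla u|\,dx\le\xi_f(r)\Bigl(\int_{B_r}\tfrac{u^2}{|x|^2}\,dx\Bigr)^{1/2}\Bigl(\int_{B_r}|\nabla u|^2\,dx\Bigr)^{1/2}\le\tfrac{2}{N-1}\xi_f(r)Q(r).
\]
Summing, the numerator of \eqref{nu2} is at most $\tfrac{5}{N-1}\xi_f(r)Q(r)$ in absolute value; dividing by $\int_{\partial B_r}u^2\,dS$ and invoking the bridge inequality produces \eqref{estimatenu2} with $\alpha(r)=r^{-1}\xi_f(r)$ and $C_1=C_1(N)$. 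In both cases the estimates hold for a.e.\ $r\in(0,r_0)$ --- the only finiteness issues, such as that $\xi_f(r)$ (resp.\ $\eta(r,f)$, $\eta(r,\nabla f\cdot x)$) is finite, being handled exactly as in the proof of \eqref{pohoz} --- and for the remaining null set of $r$ the bound is trivial, since $\mathcal N(r)+\tfrac{N-1}{2}>\tfrac{N-1}{4}>0$ by \eqref{Nlimitatafrombelow}.
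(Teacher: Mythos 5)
Your proposal is correct and follows essentially the same route as the paper: the "bridge inequality" is just the paper's estimate \eqref{nablaeta} divided through by $\int_{\partial B_r}u^2\,dS$, the three terms under \eqref{xi0}-\eqref{xi} are bounded exactly as in \eqref{primoterm}, \eqref{fuxcdotnablau}, \eqref{bordo}, and under \eqref{eta0}-\eqref{eta} the same telescoping to $-\int_{B_r}(2f+\nabla f\cdot x)u^2\,dx$ followed by the definition of $\eta$ yields the same constant $C_1=4$.
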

\begin{proof}
From Lemma \ref{lemmacoerc} we deduce that, for all $r\in (0,r_0)$,
\begin{equation}\label{nablaeta}
\int_{B_r}|\nabla u|^2\,dx\leq 2\bigl(r^{N-1}\mathcal{D}(r)+\omega(r)r^{N}\mathcal{H}(r)\bigr),
\end{equation}
where $\omega(r)$ is defined in \eqref{eq:omega}.

Let us first suppose to be under assumptions \eqref{xi0}-\eqref{xi}. 
Estimating the first term in the numerator of $\nu_2(r)$ we obtain that
\begin{equation}\label{primoterm}
\begin{split}
\biggl|\int_{B_r}&fu^2\,dx\biggr|\leq\xi_f(r)\int_{B_r}\frac{|u(x)|^2}{|x|^2}\,dx\leq \xi_f(r)\frac{4}{(N-1)^2}\biggl[\int_{B_r}|\nabla u|^2\,dx+\frac{N-1}{2r}\int_{\partial B_r}u^2\,dS\biggr]\\
&\leq \frac{8}{(N-1)^2}r^{N-1}\xi_f(r)\mathcal{D}(r)+\frac{16}{(N-1)^3}r^{N-1}(\xi_f(r))^2\mathcal{H}(r)+\frac{2}{N-1}r^{N-1}\xi_f(r)\mathcal{H}(r)\\
&\leq
\frac{8}{(N-1)^2}r^{N-1}\xi_f(r)\mathcal{D}(r)+\frac{4}{N-1}r^{N-1}\xi_f(r)\mathcal{H}(r)\\&
=\frac{8}{(N-1)^2}r^{N-1}\xi_f(r)\left(\mathcal{D}(r)+\frac{N-1}2\mathcal{H}(r)\right),
\end{split}
\end{equation}
where we used \eqref{xi}, Lemma \ref{lemmaHardy},
\eqref{nablaeta} and \eqref{xir0}. 
 Using H\"{o}lder inequality, \eqref{primoterm}, \eqref{xir0}, and
\eqref{nablaeta}, the second term can be estimated as follows
\begin{equation}\label{fuxcdotnablau}
\begin{split}
\biggl|\int_{B_r}fux\cdot \nabla u\,dx&\biggr|\leq \xi_f(r)\biggl(\int_{B_r}\frac{|u(x)|^2}{|x|^2}\,dx\biggr)^\frac{1}{2}\biggl(\int_{B_r}|\nabla u|^2\,dx\biggr)^\frac{1}{2}\\
&\leq
\xi_f(r)\frac{4}{N-1}r^{N-1}\left(\mathcal{D}(r)+\frac{N-1}2\mathcal{H}(r)\right)^\frac12
\left(\mathcal{D}(r)+\frac2{N-1}\xi_f(r)\mathcal{H}(r)\right)^\frac12\\
& \leq
\xi_f(r)\frac{4}{N-1}r^{N-1}\left(\mathcal{D}(r)+\frac{N-1}2\mathcal{H}(r)\right).
\end{split}
\end{equation}
For the last term we have that 
\begin{equation}\label{bordo}
r\biggl|\int_{\partial{B_r}}fu^2\,ds\biggr|\leq \frac{\xi_f(r)}{r}\int_{\partial B_r}u^2\,ds=\xi_f(r)r^{N-1}\mathcal{H}(r).
\end{equation}
Combining \eqref{primoterm}, \eqref{fuxcdotnablau}, and \eqref{bordo},
we obtain that, for all $r\in(0,r_0)$,
\begin{equation*}
\begin{split}
|\nu_2(r)|
\leq C_1\,\xi_f(r)r^{-1}\biggl[\mathcal{N}(r)+\frac{N-1}{2}\biggr]
\end{split}
\end{equation*}
for some positive constant $C_1>0$ which does not depend on $r$. 

Now let us suppose to be under assumptions
\eqref{eta0}-\eqref{eta}. In this case, the definition of
$\mathcal R(r,u)$ allows us to rewrite $\nu_2$ as 
\begin{equation*}
\nu_2(r)=-\frac{\int_{B_r}(2f+\nabla f\cdot x)u^2\,dx}{\int_{\partial B_r}u^2\,ds}.
\end{equation*}
From \eqref{eta}, \eqref{nablaeta} and \eqref{<1/2} it follows that 
\begin{equation*}
\begin{split}
\biggl|\int_{B_r} (2f+\nabla f\cdot
x)u^2\,dx\biggr|&\leq(2\eta(r,f)+\eta(r,\nabla f\cdot x))
\biggl(\int_{B_r}|\nabla u|^2\,dx+\frac{N-1}{2r}\int_{\partial B_r}|u|^2\,ds\biggr)\\
&\leq 2(2\eta(r,f)+\eta(r,\nabla f\cdot
x)) r^{N-1}\left(\mathcal{D}(r)+\frac{N-1}2\eta(r,f)\mathcal{H}(r)+
\frac{N-1}{4}\mathcal{H}(r)\right)\\
&\leq 2(2\eta(r,f)+\eta(r,\nabla f\cdot
x))r^{N-1}\biggl(\mathcal{D}(r)+\frac{N-1}{2}\mathcal{H}(r)\biggr).
\end{split}
\end{equation*}
Therefore, we have that 
\[
|\nu_2(r)|\leq \frac{2(2\eta(r,f)+\eta(r,\nabla f\cdot
x))}r\biggl(\mathcal{N}(r)+\frac{N-1}{2}\biggr)
\]
and estimate \eqref{estimatenu2} is proved also  under assumptions
\eqref{eta0}-\eqref{eta}, with $C_1=4$.
\end{proof}
\begin{lem}\label{LemNlimitata}
Letting $r_0$ be as in Lemma \ref{lemmacoerc} and $\mathcal{N}$ as in
\eqref{frequency}, there exists a positive constant $C_2>0$ such that
\begin{equation}\label{Nlimitata}
\mathcal{N}(r)\leq C_2
\end{equation}
for all $r\in (0,r_0)$.
\end{lem}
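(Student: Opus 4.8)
The plan is to derive a Gr\"onwall-type differential inequality for $\mathcal N$ from Lemmas \ref{LemN'} and \ref{Lemstimanu2}, and then to integrate it, exploiting that the integrability assumptions \eqref{xiL1} (respectively \eqref{etaL1} and \eqref{etanablafL1}) force the function $\alpha$ of \eqref{eq:alpha} to belong to $L^1(0,r_0)$.

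First I would set $g(r):=\mathcal N(r)+\frac{N-1}{2}$. By \eqref{Nlimitatafrombelow} we have $g(r)>0$ for every $r\in(0,r_0)$, and $g\in W^{1,1}_{\mathrm{loc}}((0,r_0])$ by Lemma \ref{LemN'}. Since $\nu_1(r)\geq 0$ for a.e.\ $r$ — this is just the Cauchy--Schwarz inequality in $L^2(\partial B_r)$ applied to the pair $u$, $\frac{\partial u}{\partial\nu}$, which makes the numerator of $\nu_1$ non-negative — combining \eqref{eq:10} with the estimate \eqref{estimatenu2} gives
\[
g'(r)=\mathcal N'(r)\geq \nu_2(r)\geq -|\nu_2(r)|\geq -C_1\,\alpha(r)\,g(r)\qquad\text{for a.e. }r\in(0,r_0).
\]

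Next I would put $A(r):=\int_0^r\alpha(s)\,ds$, which is well defined, non-negative, non-decreasing and absolutely continuous on $[0,r_0]$: indeed, by the definition \eqref{eq:alpha} of $\alpha$, one has $\alpha\in L^1(0,r_0)$ as a consequence of \eqref{xiL1} under assumptions \eqref{xi0}-\eqref{xi}, and of \eqref{etaL1} together with \eqref{etanablafL1} under assumptions \eqref{eta0}-\eqref{eta}. From the inequality above it follows that $r\mapsto g(r)e^{C_1A(r)}$ is non-decreasing on $(0,r_0)$: on any compact subinterval $[r_1,r_0]\subset(0,r_0]$ the continuous function $g$ is bounded below by a positive constant, hence $g\,e^{C_1A}\in W^{1,1}(r_1,r_0)$ with a.e.\ derivative $e^{C_1A(r)}\big(g'(r)+C_1\alpha(r)g(r)\big)\geq 0$, and then one lets $r_1\to 0^+$. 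Consequently, for every $r\in(0,r_0)$,
\[
g(r)\leq g(r_0)\,e^{C_1\,(A(r_0)-A(r))}\leq g(r_0)\,e^{C_1\,A(r_0)},
\]
so that \eqref{Nlimitata} holds with
\[
C_2:=\Bigl(\mathcal N(r_0)+\tfrac{N-1}{2}\Bigr)\exp\!\Bigl(C_1\!\int_0^{r_0}\!\alpha(s)\,ds\Bigr)-\tfrac{N-1}{2}.
\]

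I expect no serious obstacle here: the argument is routine once Lemmas \ref{LemN'} and \ref{Lemstimanu2} are in hand. The only point deserving a little care is the passage from the pointwise differential inequality for the $W^{1,1}_{\mathrm{loc}}$ function $g$ to the monotonicity of $g\,e^{C_1A}$, which has to be carried out on compact subintervals bounded away from $0$ — where $g$ stays bounded below away from zero, so that $\log g$ (or equivalently the product $g\,e^{C_1A}$) is again $W^{1,1}$ and the Leibniz rule applies — and then the limit $r_1\to 0^+$ is taken. The uniformity in $r$ of the final bound is exactly what the $L^1$-integrability of $\alpha$ near the origin provides.
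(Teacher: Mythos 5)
Your argument is correct and coincides with the paper's own proof: both derive the differential inequality $\bigl(\mathcal N+\tfrac{N-1}{2}\bigr)'(r)\geq -C_1\alpha(r)\bigl(\mathcal N(r)+\tfrac{N-1}{2}\bigr)$ from Lemmas \ref{LemN'} and \ref{Lemstimanu2} together with the Cauchy--Schwarz positivity of $\nu_1$, and then integrate over $(r,r_0)$ using \eqref{Nlimitatafrombelow} and the $L^1$-integrability of $\alpha$, arriving at the same explicit constant $C_2$. Your extra remarks on carrying out the Gr\"onwall step on compact subintervals of $(0,r_0]$ are a careful (and harmless) elaboration of what the paper does implicitly.
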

\begin{proof}
By Lemma \ref{LemN'}, Schwarz's inequality, and Lemma
\ref{Lemstimanu2}, we obtain 
\begin{equation}\label{mathcalN+N-1'}
\biggl(\mathcal{N}+\frac{N-1}{2}\biggr)'(r)\geq \nu_2(r)\geq -C_1 \alpha(r)\biggl[\mathcal{N}(r)+\frac{N-1}{2}\biggr]
\end{equation}
for a.e. $r\in (0,r_0)$, where $\alpha$ is defined in
  \eqref{eq:alpha}. 
Taking into account that $\mathcal{N}(r)+\frac{N-1}{2}>0$
  for all $r\in(0,r_0)$ in view of \eqref{Nlimitatafrombelow}
and $\alpha\in L^1(0,r_0)$ thanks to assumptions \eqref{xiL1},
\eqref{etaL1} and \eqref{etanablafL1}, after integration over $(r,r_0)$ it follows that
\[
\mathcal{N}(r)\leq -\frac{N-1}{2}+\biggl(\mathcal{N}(r_0)+\frac{N-1}{2}\biggr)\mathrm{exp}\biggl(C_1\int_0^{r_0} \alpha(s)ds\biggr)
\]
for any $r\in (0,r_0)$, thus proving estimate \eqref{Nlimitata}.
\end{proof}
\begin{lem}\label{Lemlimesiste}
The limit 
\[
\gamma:=\lim_{r\rightarrow 0^+}\mathcal{N}(r)
\]
exists and is finite.  Moreover $\gamma\geq0$.
\end{lem}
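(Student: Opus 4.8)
The plan is to show that $\mathcal N$ has a finite limit at $0^+$ by combining the lower bound \eqref{eq:15} (equivalently $\liminf_{r\to0^+}\mathcal N(r)\geq0$) with an argument showing that the negative part of $\mathcal N'$ is integrable near $0$, so that $\mathcal N$ is, up to an $L^1$-perturbation, monotone; this forces convergence. The sign of $\gamma$ will then follow immediately from \eqref{eq:15}.

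First I would revisit the differential inequality obtained in Lemma \ref{LemN'}. Write \eqref{eq:10} as $\mathcal N'(r)\geq \nu_1(r)+\nu_2(r)$, and note that $\nu_1(r)\geq0$ for a.e.\ $r\in(0,r_0)$ by the Cauchy--Schwarz inequality on $\partial B_r$ (this is exactly the classical quadratic term in Almgren monotonicity). Hence $\mathcal N'(r)\geq \nu_2(r)$ a.e. Next I would invoke Lemma \ref{Lemstimanu2}, which bounds $|\nu_2(r)|\leq C_1\alpha(r)\big[\mathcal N(r)+\tfrac{N-1}2\big]$, together with Lemma \ref{LemNlimitata}, which gives $\mathcal N(r)\leq C_2$ on $(0,r_0)$, and \eqref{Nlimitatafrombelow}, which gives $\mathcal N(r)+\tfrac{N-1}2>0$. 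Combining these, $|\nu_2(r)|\leq C_1\big(C_2+\tfrac{N-1}2\big)\alpha(r)=:C_3\,\alpha(r)$, and since $\alpha\in L^1(0,r_0)$ by assumptions \eqref{xiL1}, \eqref{etaL1}, \eqref{etanablafL1}, we get $\nu_2\in L^1(0,r_0)$.

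Now define $\Phi(r)=\mathcal N(r)-\int_0^r\nu_2(s)\,ds$ (the integral is well defined since $\nu_2\in L^1$). Then $\Phi'(r)=\mathcal N'(r)-\nu_2(r)\geq\nu_1(r)\geq0$ for a.e.\ $r$, and $\Phi\in W^{1,1}_{\mathrm{loc}}((0,r_0])$, so $\Phi$ is (equal a.e.\ to) a nondecreasing function on $(0,r_0)$. Moreover $\Phi$ is bounded below: indeed $\Phi(r)=\mathcal N(r)-\int_0^r\nu_2\geq -\tfrac{N-1}2-\|\nu_2\|_{L^1(0,r_0)}$ using \eqref{Nlimitatafrombelow}. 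A bounded-below nondecreasing function has a finite limit as $r\to0^+$, so $\lim_{r\to0^+}\Phi(r)$ exists and is finite; since $\int_0^r\nu_2(s)\,ds\to0$ as $r\to0^+$ (absolute continuity of the Lebesgue integral), it follows that $\gamma:=\lim_{r\to0^+}\mathcal N(r)$ exists and is finite. Finally, from \eqref{eq:15} we have $\mathcal N(r)>-\varepsilon$ for $r$ small, for every $\varepsilon>0$, hence $\gamma\geq0$.

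The main obstacle is not any single step but making sure the ingredients are genuinely in place: the nonnegativity of $\nu_1$ must be the honest Cauchy--Schwarz remainder, and the chain $\nu_2\in L^1$ crucially requires \emph{both} the a priori upper bound on $\mathcal N$ from Lemma \ref{LemNlimitata} \emph{and} the a priori lower bound \eqref{Nlimitatafrombelow}, so that the bracket $\mathcal N(r)+\tfrac{N-1}2$ is squeezed between two positive constants; with those, the monotonicity-up-to-$L^1$-perturbation argument is routine. One should also note that $\mathcal N\in W^{1,1}_{\mathrm{loc}}((0,r_0])$ (Lemma \ref{LemN'}) is exactly what licenses integrating the differential inequality and identifying $\Phi$ with its absolutely continuous representative.
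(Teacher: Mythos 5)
Your proof is correct and follows essentially the same route as the paper's: both start from the differential inequality $\mathcal N'(r)\geq\nu_1(r)+\nu_2(r)$ with $\nu_1\geq0$ and $|\nu_2(r)|\leq C_1\alpha(r)\bigl[\mathcal N(r)+\tfrac{N-1}{2}\bigr]$, and conclude that $\mathcal N$ is monotone up to an integrable perturbation. The only difference is cosmetic: the paper absorbs the perturbation multiplicatively, via the integrating factor $e^{C_1\int_0^r\alpha(s)\,ds}$ applied to $\mathcal N+\tfrac{N-1}{2}$ (which needs only the lower bound \eqref{Nlimitatafrombelow}), whereas you absorb it additively by subtracting $\int_0^r\nu_2(s)\,ds$, which additionally requires the upper bound of Lemma \ref{LemNlimitata} to guarantee $\nu_2\in L^1(0,r_0)$ --- a bound that is indeed already established at this point, so your argument is complete.
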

\begin{proof}
Since 
$\mathcal{N}'(r)\geq-C_1\alpha(r)\bigl[\mathcal{N}(r)+\frac{N-1}{2}\bigr]$
in view of \eqref{mathcalN+N-1'} and $\alpha\in L^1(0,r_0)$ 
by assumptions \eqref{xiL1}, \eqref{etaL1} and \eqref{etanablafL1},
we have that 
\begin{equation*}
\frac{d}{dr}\biggl[e^{C_1\int_0^r \alpha(s)\,ds}\biggl(\mathcal{N}(r)+\frac{N-1}{2}\biggr)\biggr]\geq 0,
\end{equation*}
therefore the limit of $r\mapsto e^{C_1\int_0^r
  \alpha(s)\,ds}\bigl(\mathcal{N}(r)+\frac{N-1}{2}\bigr)$ as
$r\rightarrow 0^+$ exists; hence the function $\mathcal{N}$ 
 has a limit as $r\rightarrow 0^+$.

From \eqref{Nlimitata} and \eqref{eq:15} it follows that $C_2\geq \gamma:=\lim_{r\rightarrow
  0^+}\mathcal{N}(r)= \liminf_{r\rightarrow 0^+}\mathcal{N}(r)\geq0$;
in particular $\gamma$  is finite.
\end{proof}
A first consequence of the above analysis on the Almgren's frequency function is the following estimate of $\mathcal{H}(r)$.
\begin{lem}
Let $\gamma$ be as in Lemma
\ref{Lemlimesiste} and $r_0$ be as in Lemma \ref{lemmacoerc}. Then there exists a constant $K_1>0$ such that
\begin{equation}\label{H<kr}
\mathcal{H}(r)\leq K_1r^{2\gamma}\quad \text{for all $r\in(0,r_0)$}.
\end{equation}
On the other hand, for any $\sigma>0$ there exists a constant $K_2(\sigma)>0$ depending on $\sigma$ such that 
\begin{equation}\label{H>kr}
\mathcal{H}(r)\geq K_2(\sigma)r^{2\gamma+\sigma}\quad \text{for all $r\in(0,r_0)$}.
\end{equation}
\end{lem}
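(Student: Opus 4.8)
The plan is to integrate the logarithmic derivative of $\mathcal H$. By Lemma~\ref{LemH'} and Lemma~\ref{H>0} the function $\log\mathcal H$ lies in $W^{1,1}_{\mathrm{loc}}((0,r_0])$ and
\[
\frac{d}{dr}\log\mathcal H(r)=\frac{\mathcal H'(r)}{\mathcal H(r)}=\frac{2}{r}\,\frac{\mathcal D(r)}{\mathcal H(r)}=\frac{2\,\mathcal N(r)}{r}\qquad\text{for a.e. }r\in(0,r_0),
\]
so that, by local absolute continuity, $\log\bigl(\mathcal H(\rho)/\mathcal H(r)\bigr)=\int_r^{\rho}\tfrac{2\mathcal N(s)}{s}\,ds$ for all $0<r<\rho<r_0$. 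Both bounds \eqref{H<kr} and \eqref{H>kr} then reduce to two-sided control of $\mathcal N(s)$ near $0$.

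For the upper bound \eqref{H<kr} I would invoke the monotonicity established in the proof of Lemma~\ref{Lemlimesiste}: the map $r\mapsto e^{C_1\int_0^r\alpha}\bigl(\mathcal N(r)+\tfrac{N-1}{2}\bigr)$ is non-decreasing on $(0,r_0)$ and, since $\int_0^r\alpha\to0$, converges to $\gamma+\tfrac{N-1}{2}$ as $r\to0^+$; hence it stays $\ge\gamma+\tfrac{N-1}{2}$, and using $e^{-x}\ge1-x$ one obtains a lower bound $\mathcal N(r)\ge\gamma-C\int_0^r\alpha(s)\,ds$ valid on $(0,r_0)$, with $C=C_1\bigl(\gamma+\tfrac{N-1}{2}\bigr)$. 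Plugging this into the integrated identity with $\rho=r_0$ and using $\gamma\ge0$ gives
\[
\log\frac{\mathcal H(r_0)}{\mathcal H(r)}\ge 2\gamma\log\frac{r_0}{r}-2C\int_r^{r_0}\frac1s\Bigl(\int_0^s\alpha(t)\,dt\Bigr)ds\ge 2\gamma\log\frac{r_0}{r}-2C\,\Lambda,
\]
where $\Lambda:=\int_0^{r_0}\tfrac1s\int_0^s\alpha(t)\,dt\,ds<+\infty$; finiteness of $\Lambda$ is exactly the content of the second $L^1$-conditions in \eqref{xiL1} (respectively \eqref{etaL1} and \eqref{etanablafL1}). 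Exponentiating yields $\mathcal H(r)\le \mathcal H(r_0)e^{2C\Lambda}r_0^{-2\gamma}\,r^{2\gamma}$, which is \eqref{H<kr}.

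For the lower bound \eqref{H>kr}, fix $\sigma>0$. Since $\mathcal N(r)\to\gamma$ by Lemma~\ref{Lemlimesiste}, there is $r_\sigma\in(0,r_0)$ with $\mathcal N(r)\le\gamma+\sigma/2$ on $(0,r_\sigma)$; integrating the identity with $\rho=r_\sigma$ gives $\mathcal H(r)\ge \mathcal H(r_\sigma)\,r_\sigma^{-(2\gamma+\sigma)}\,r^{2\gamma+\sigma}$ for $r\in(0,r_\sigma)$. On the remaining range $[r_\sigma,r_0)$ the estimate is immediate: $\mathcal H$ is continuous and strictly positive on $[r_\sigma,r_0]$ by Lemma~\ref{H>0}, so $m_\sigma:=\min_{[r_\sigma,r_0]}\mathcal H>0$, and since $2\gamma+\sigma>0$ one has $r^{2\gamma+\sigma}\le r_0^{2\gamma+\sigma}$, whence $\mathcal H(r)\ge m_\sigma\ge m_\sigma r_0^{-(2\gamma+\sigma)}r^{2\gamma+\sigma}$. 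Taking $K_2(\sigma)$ to be the minimum of the two resulting constants proves \eqref{H>kr}.

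I do not expect a genuine obstacle here: the argument is the standard integration of the monotonicity relation. The only point requiring care — and the reason the hypotheses are phrased as they are — is the uniform bound on the remainder $\int_r^{r_0}\tfrac1s\int_0^s\alpha(t)\,dt\,ds$ arising in the lower bound for $\mathcal N$: plain integrability $\tfrac{\xi_f(r)}{r}\in L^1$ (resp. $\tfrac{\eta(r,\cdot)}{r}\in L^1$) does not suffice, and one really needs the additional hypothesis $\tfrac1r\int_0^r\tfrac{\xi_f(s)}{s}\,ds\in L^1$ (resp. its analogues in \eqref{etaL1} and \eqref{etanablafL1}).
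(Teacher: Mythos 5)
Your proposal is correct and follows essentially the same route as the paper: both arguments establish a lower bound of the form $\mathcal N(r)\geq\gamma-C\int_0^r\alpha(s)\,ds$ (the paper by integrating $\mathcal N'\in L^1$ from $0$ to $r$, you via the monotone auxiliary function $e^{C_1\int_0^r\alpha}\bigl(\mathcal N+\tfrac{N-1}{2}\bigr)$ and $e^{-x}\geq 1-x$, a trivial variation), then integrate $\mathcal H'/\mathcal H=2\mathcal N/r$ over $(r,r_0)$ using the hypothesis that $r\mapsto\tfrac1r\int_0^r\alpha$ is in $L^1(0,r_0)$ for the upper bound, and over $(r,r_\sigma)$ with $\mathcal N\leq\gamma+\sigma/2$ plus positivity and continuity of $\mathcal H$ on $[r_\sigma,r_0]$ for the lower bound. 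Your identification of the double-integral $L^1$ conditions in \eqref{xiL1}, \eqref{etaL1}, \eqref{etanablafL1} as the essential hypothesis matches the paper exactly.
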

\begin{proof}
 By \eqref{mathcalN+N-1'} and \eqref{Nlimitata} we have that 
\begin{equation}\label{eq:17}
\mathcal
  N'(r)\geq -C_1\bigg(C_2+\frac{N-1}2\bigg)\alpha(r) \quad\text{a.e. in $(0,r_0)$}.
\end{equation}
Hence,
  from the fact that $\alpha\in L^1(0,r_0)$ and   
Lemma \ref{Lemlimesiste}, it follows that  $\mathcal{N}'\in
L^1(0,r_0)$. Therefore  from  
\eqref{eq:17}  it follows that 
\begin{equation}\label{N-gamma}
\mathcal{N}(r)-\gamma=\int_0^r \mathcal{N}'(s)\,ds\geq
  -C_1\bigg(C_2+\frac{N-1}2\bigg)\int_0^r\alpha(s)\,ds=-C_3rF(r),
\end{equation}
where $C_3=C_1\big(C_2+\frac{N-1}2\big)$ and 
\begin{equation*}
F(r):=\frac{1}{r}\int_0^r \alpha(s)\,ds.
\end{equation*}
 We observe that, thanks to assumptions \eqref{xiL1},
\eqref{etaL1} and \eqref{etanablafL1},
\begin{equation}\label{eq:18}
F\in L^1(0,r_0).
\end{equation}
From \eqref{H'2} and \eqref{N-gamma} we deduce that, for
a.e. $r\in (0,r_0)$,
\begin{equation*}
\frac{\mathcal{H}'(r)}{\mathcal{H}(r)}=\frac{2\mathcal{N}(r)}{r}\geq \frac{2\gamma}{r}-2C_3F(r),
\end{equation*}
which, thanks to  \eqref{eq:18},  after integration over the interval $(r,r_0)$, yields \eqref{H<kr}.

Let us prove \eqref{H>kr}. Since $\gamma:=\lim_{r\rightarrow0^+}\mathcal{N}(r)$, for any $\sigma>0$ there exists $r_\sigma>0$ such that $\mathcal{N}(r)<\gamma + \sigma/2$ for any $r\in(0,r_\sigma)$ and hence 
\begin{equation*}
\frac{\mathcal{H}'(r)}{\mathcal{H}(r)}=\frac{2\mathcal{N}(r)}{r}<\frac{2\gamma+\sigma}{r}\quad \text{for all $r\in(0,r_\sigma)$}.
\end{equation*}
Integrating over the interval $(r,r_\sigma)$ and by continuity of $\mathcal{H}$ outside $0$, we obtain \eqref{H>kr} for some constant $K_2(\sigma)$ depending on $\sigma$.
\end{proof}

\section{The Blow-up Argument}\label{sec:blow-up-argument}
In this section we develop a blow-up analysis for scaled
solutions, with the aim of classifying their possible vanishing
orders. The presence of the crack produces several additional
difficulties with respect to the classical case, mainly relying in
the persistence of the singularity even far from the origin, all along
the edge.  These  difficulties are here overcome by means of estimates
of boundary gradient integrals  (Lemma
\ref{gradwlambdaRlambdabounded}) derived by some fine doubling
properties, in the spirit of \cite{Lemmini}, where an analogous lack
of regularity far from the origin was instead produced by
many-particle and cylindrical potentials.

Throughout this section we let $u$ be a non trivial  weak
$H^1(B_{\hat{R}})$-solution to equation \eqref{eq:CPu} with $f$
satisfying either \eqref{xi0}-\eqref{xi} or
\eqref{eta0}-\eqref{eta}. Let $\mathcal{D}$ and $\mathcal{H}$ be the
functions defined in \eqref{D} and \eqref{H} and $r_0$ be as in Lemma
\ref{lemmacoerc}.
For $\lambda\in (0,r_0)$, we define the scaled function
\begin{equation}\label{ulambda}
w^\lambda(x)=\frac{u(\lambda x)}{\sqrt{\mathcal{H}(\lambda)}}.
\end{equation}
 We observe that $w^\lambda \in
H^1_{\Gamma_\lambda}(B_{\lambda^{-1}\hat{R}})$, where
\[
\Gamma_\lambda:=\lambda^{-1}\Gamma=\{x\in\R^N:\lambda
x\in\Gamma\}=\left\{
x=(x',x_N)\in\R^N:x_N\geq\frac{g(\lambda x')}{\lambda}\right\},
\] 
and 
\begin{equation*}
\int_{B_{\lambda^{-1}\hat{R}}}\nabla w^\lambda(x)\cdot\nabla v(x)\,dx-
\lambda^2\int_{B_{\lambda^{-1}\hat{R}}}f(\lambda
x)w^\lambda(x)v(x)\,dx=0\quad\text{for all } v\in
C_c^\infty(B_{\lambda^{-1}\hat{R}}\setminus \Gamma_\lambda),
\end{equation*}
i.e. $w^\lambda$ weakly solves 
\begin{equation}\label{eq:22}
\left\{\begin{aligned}
-\Delta w^\lambda(x)&=\lambda^2f(\lambda x)\,w^\lambda(x) &&\text{in}\ B_{\lambda^{-1}\hat{R}}\setminus\Gamma_\lambda, \\
w^\lambda&=0 && \text{on}\ \Gamma_\lambda.
\end{aligned}\right. 
\end{equation}
\begin{rem}\label{rem:mosco}
From assumptions \eqref{g0=0} and \eqref{gC2} we easily deduce that
$\R^{N+1}\setminus\Gamma_\lambda$ converges in the sense of Mosco (see
\cite{Daners2003,Mosco1969})  to the set 
$\R^{N+1}\setminus\tilde\Gamma$, where 
\begin{equation}\label{Gammatilde}
\tilde{\Gamma}=\{(x',x_N)\in\mathbb{R}^N:\, x_N\geq 0\}.
\end{equation}
In particular, for every $R>0$, the weak limit points in $H^1(B_R)$ as
$\lambda\to 0^+$ of the family of functions $\{w^\lambda\}_\lambda$
belong to  $H^1_{\tilde \Gamma}(B_R)$. 
\end{rem}

\begin{lem}\label{wlambdlalimitata}
For $\lambda\in (0,r_0)$, let $w^\lambda$ be defined in
\eqref{ulambda}. 
Then $\{w^\lambda\}_{\lambda\in (0,r_0)}$ is bounded in $H^1(B_1)$.
\end{lem}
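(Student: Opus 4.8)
The plan is to bound $\|w^\lambda\|_{H^1(B_1)}^2 = \int_{B_1}|\nabla w^\lambda|^2\,dx + \int_{B_1}|w^\lambda|^2\,dx$ by rewriting both integrals in terms of the functions $\mathcal D$ and $\mathcal H$ evaluated at $\lambda$, and then invoking the boundedness of the frequency function $\mathcal N = \mathcal D/\mathcal H$ proved in Lemma~\ref{LemNlimitata} together with the doubling-type lower bound \eqref{H>kr}. First I would record the scaling identities obtained by the change of variables $x\mapsto \lambda x$: for every $\rho\in(0,\lambda^{-1}r_0)$ one has
\[
\rho^{1-N}\int_{B_\rho}\big(|\nabla w^\lambda|^2 - \lambda^2 f(\lambda x)(w^\lambda)^2\big)\,dx = \frac{\mathcal D(\lambda\rho)}{\mathcal H(\lambda)},\qquad \rho^{-N}\int_{\partial B_\rho}(w^\lambda)^2\,dS = \frac{\mathcal H(\lambda\rho)}{\mathcal H(\lambda)}.
\]
In particular, evaluating at $\rho=1$ gives $\int_{B_1}\big(|\nabla w^\lambda|^2-\lambda^2 f(\lambda x)(w^\lambda)^2\big)\,dx = \mathcal N(\lambda)$, which is bounded above by $C_2$ uniformly in $\lambda\in(0,r_0)$ by Lemma~\ref{LemNlimitata}.

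Next I would use the coercivity Lemma~\ref{lemmacoerc}, applied to $w^\lambda$ on $B_1$ after absorbing the scaling (note $\xi_{\lambda^2 f(\lambda\,\cdot)}(r) = \xi_f(\lambda r)$ and similarly for $\eta$, so the rescaled potential satisfies the same smallness hypothesis for $\lambda$ small, say $\lambda<r_0$). This yields
\[
\tfrac12\int_{B_1}|\nabla w^\lambda|^2\,dx \leq \int_{B_1}\big(|\nabla w^\lambda|^2 - \lambda^2 f(\lambda x)(w^\lambda)^2\big)\,dx + \omega(\lambda)\int_{\partial B_1}(w^\lambda)^2\,dS = \mathcal N(\lambda) + \omega(\lambda)\frac{\mathcal H(\lambda)}{\mathcal H(\lambda)},
\]
wait—more precisely the boundary term is $\omega(\lambda)\,\mathcal H(\lambda)/\mathcal H(\lambda)=\omega(\lambda)$, which is bounded since $\lambda\omega(\lambda)<\tfrac{N-1}4$ gives $\omega(\lambda)\le \tfrac{N-1}{4\lambda}$, but this blows up as $\lambda\to0$. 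So instead I would use $\mathcal H(\lambda)/\mathcal H(\lambda)=1$ directly: the rescaled version of \eqref{eq:bound-omega} gives that the boundary coefficient for $w^\lambda$ on $B_1$ is exactly $\omega(\lambda)\lambda$ after the change of variables, hence $<\tfrac{N-1}4$, a uniform bound. Thus $\int_{B_1}|\nabla w^\lambda|^2\,dx \le 2\mathcal N(\lambda) + \tfrac{N-1}2 \le 2C_2 + \tfrac{N-1}2$, uniformly in $\lambda$.

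For the $L^2$ part, by the change of variables $\int_{B_1}|w^\lambda|^2\,dx = \mathcal H(\lambda)^{-1}\lambda^{-N-1}\int_{B_\lambda}u^2\,dx = \mathcal H(\lambda)^{-1}\int_0^1 \rho^{N}\cdot\rho^{-N}\big(\lambda^{-N}\int_{\partial B_{\lambda\rho}}u^2\,dS\big)\cdot\lambda\,\big/\lambda\,d\rho$; cleaner is $\int_{B_1}|w^\lambda|^2\,dx = \int_0^1 \rho^{N}\,\frac{\mathcal H(\lambda\rho)}{\mathcal H(\lambda)}\,d\rho$. Now by \eqref{H<kr} and \eqref{H>kr} (with any fixed $\sigma\in(0,1)$), $\frac{\mathcal H(\lambda\rho)}{\mathcal H(\lambda)} \le \frac{K_1(\lambda\rho)^{2\gamma}}{K_2(\sigma)\lambda^{2\gamma+\sigma}} = \frac{K_1}{K_2(\sigma)}\,\rho^{2\gamma}\lambda^{-\sigma}$, which still degenerates as $\lambda\to 0$. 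The correct comparison is the other way: use $\mathcal H(\lambda\rho)\le K_1(\lambda\rho)^{2\gamma}$ in the numerator and $\mathcal H(\lambda)\ge K_2(\sigma)\lambda^{2\gamma+\sigma}$ in the denominator only when $\lambda\rho$ and $\lambda$ are compared correctly — since $\rho\le 1$, for the ratio we should instead observe that $r\mapsto \mathcal H(r)/r^{2\gamma+\sigma}$ is, by the differential inequality $\mathcal H'/\mathcal H = 2\mathcal N/r \le (2\gamma+\sigma)/r$ near $0$, monotone, giving $\frac{\mathcal H(\lambda\rho)}{(\lambda\rho)^{2\gamma+\sigma}} \le \frac{\mathcal H(\lambda)}{\lambda^{2\gamma+\sigma}}$ for $\rho\le 1$, i.e. $\frac{\mathcal H(\lambda\rho)}{\mathcal H(\lambda)} \le \rho^{2\gamma+\sigma}$. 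Since $\gamma\ge 0$ and $\sigma>0$, the exponent $N + 2\gamma+\sigma > -1$, so $\int_0^1 \rho^{N+2\gamma+\sigma}\,d\rho$ converges to a finite constant independent of $\lambda$. Combining the two bounds gives the claim. The main obstacle is getting the monotonicity argument for $\mathcal H(r)/r^{2\gamma+\sigma}$ set up correctly so the $L^2$ integral is controlled uniformly — the gradient bound is essentially immediate from $\mathcal N \le C_2$ and coercivity, but the $L^2$ term requires the doubling estimate, and one must be careful that the inequality runs in the direction that survives the limit $\lambda\to 0^+$.
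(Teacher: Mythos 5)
Your gradient estimate is correct and is essentially the paper's argument: apply \eqref{equazcoerc} to $u$ on $B_\lambda$, scale, and use $\mathcal N(\lambda)\le C_2$ together with $\lambda\omega(\lambda)<\frac{N-1}{4}$ to get $\int_{B_1}|\nabla w^\lambda|^2\,dx\le 2C_2+\frac{N-1}{2}$ (your mid-proof self-correction about the boundary coefficient being $\lambda\omega(\lambda)$ rather than $\omega(\lambda)$ lands in the right place). The problem is in the $L^2$ part. Your identity $\int_{B_1}|w^\lambda|^2\,dx=\int_0^1\rho^N\,\frac{\mathcal H(\lambda\rho)}{\mathcal H(\lambda)}\,d\rho$ is fine, but the monotonicity you invoke runs the wrong way: the differential inequality $\mathcal H'/\mathcal H=2\mathcal N/r\le(2\gamma+\sigma)/r$ makes $r\mapsto\mathcal H(r)\,r^{-(2\gamma+\sigma)}$ \emph{decreasing}, so for $\rho\le1$ it yields $\frac{\mathcal H(\lambda\rho)}{(\lambda\rho)^{2\gamma+\sigma}}\ge\frac{\mathcal H(\lambda)}{\lambda^{2\gamma+\sigma}}$, i.e. $\frac{\mathcal H(\lambda\rho)}{\mathcal H(\lambda)}\ge\rho^{2\gamma+\sigma}$ — a lower bound, not the upper bound you claim. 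An upper bound on $\mathcal N$ can never control $\mathcal H$ at the \emph{smaller} radius $\lambda\rho$ from above; for that you need the lower bound \eqref{Nlimitatafrombelow}, $\mathcal N(r)>-\frac{N-1}{4}$, which makes $r\mapsto\mathcal H(r)\,r^{(N-1)/2}$ increasing and gives $\frac{\mathcal H(\lambda\rho)}{\mathcal H(\lambda)}\le\rho^{-(N-1)/2}$, whence $\int_0^1\rho^{N-(N-1)/2}\,d\rho<\infty$. With that substitution your scheme closes (this is exactly the mechanism behind the doubling estimate \eqref{eq1} in Lemma \ref{doublingwlambda}).

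The paper avoids all of this: having the gradient bound and the normalization $\int_{\partial B_1}|w^\lambda|^2\,dS=1$ from \eqref{Hcambiodivar}, it simply applies the Hardy inequality with boundary terms \eqref{Hardy} on $B_1$, which gives $\bigl(\frac{N-1}{2}\bigr)^2\int_{B_1}|w^\lambda|^2\,dx\le\bigl(\frac{N-1}{2}\bigr)^2\int_{B_1}\frac{|w^\lambda|^2}{|x|^2}\,dx\le\int_{B_1}|\nabla w^\lambda|^2\,dx+\frac{N-1}{2}$, uniformly in $\lambda$. That route needs neither $\gamma$, nor \eqref{H<kr}--\eqref{H>kr}, nor any doubling, and you already have both ingredients in hand at the point where your proof goes astray.
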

\begin{proof}
From \eqref{Hcambiodivar} it follows that 
\begin{equation}\label{eq:20}
\int_{\partial
  B_1}|w^\lambda|^2 dS=1.
\end{equation} 
By scaling and \eqref{equazcoerc} we have that 
\begin{equation}\label{eq:19-a}
  \mathcal N(\lambda)\geq\frac{\lambda^{1-N}}{\mathcal
  H(\lambda)}\left(\frac{1}{2}\int_{B_\lambda}|\nabla u|^2\,dx
  -\omega(\lambda)\int_{\partial B_\lambda}u^2\,dS\right)=
\frac{1}{2}\int_{B_1}|\nabla w^\lambda(x)|^2 dx -\lambda\omega(\lambda).
\end{equation}
From \eqref{eq:19-a}, \eqref{Nlimitata}, and \eqref{eq:bound-omega} it
follows that 
\begin{equation}\label{gradwlambdalimitato}
\frac{1}{2}\int_{B_1}|\nabla w^\lambda(x)|^2 dx\leq C_2+\frac{N-1}{4}
\end{equation}
for every $\lambda\in (0,r_0)$.
The conclusion follows from \eqref{gradwlambdalimitato} and
\eqref{eq:20}, taking into account \eqref{Hardy}. 
\end{proof}
In the next lemma we prove a \textit{doubling} type result. 
\begin{lem}\label{doublingwlambda}
There exists $C_4>0$ such that
\begin{equation}\label{eq1}
\frac{1}{C_4}\mathcal{H}(\lambda)\leq \mathcal{H}(R\lambda)\leq C_4 \mathcal{H}(\lambda)\quad \text{for any $\lambda\in (0,r_0/2)$ and $R\in [1,2]$},
\end{equation}
\begin{equation}\label{eq2}
\int_{B_R}|\nabla w^\lambda(x)|^2\,dx\leq 2^{N-1}C_4\int_{B_1}|\nabla w^{R\lambda}(x)|^2\,dx\quad \text{for any $\lambda\in (0,r_0/2)$ and $R\in [1,2]$},
\end{equation}
and
\begin{equation}\label{eq3}
\int_{B_R}|w^\lambda(x)|^2\,dx\leq 2^{N+1}C_4\int_{B_1}|w^{R\lambda}(x)|^2\,dx\quad \text{for any $\lambda\in (0,r_0/2)$ and $R\in [1,2]$},
\end{equation}
where $w^\lambda$ is defined in \eqref{ulambda}.
\end{lem}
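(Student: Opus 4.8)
The plan is to derive the doubling estimate \eqref{eq1} from the monotonicity of the Almgren frequency function, and then to obtain \eqref{eq2} and \eqref{eq3} as straightforward consequences by combining \eqref{eq1} with a change of variables and the boundedness of $\mathcal N$.

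\textbf{Step 1: proof of \eqref{eq1}.} By Lemma \ref{LemH'} we have $\mathcal H'(r)/\mathcal H(r)=2\mathcal N(r)/r$ for a.e.\ $r\in(0,r_0)$, and $\mathcal H>0$ on $(0,r_0]$ by Lemma \ref{H>0}. Integrating this identity over $(\lambda,R\lambda)$ gives
\begin{equation*}
\log\frac{\mathcal H(R\lambda)}{\mathcal H(\lambda)}=\int_\lambda^{R\lambda}\frac{2\mathcal N(s)}{s}\,ds=\int_1^R\frac{2\mathcal N(\lambda t)}{t}\,dt.
\end{equation*}
For $\lambda\in(0,r_0/2)$ and $R\in[1,2]$ the argument $\lambda t$ ranges in $(0,r_0)$, so by \eqref{Nlimitatafrombelow} and \eqref{Nlimitata} we have $-\frac{N-1}{4}<\mathcal N(\lambda t)\le C_2$; hence the last integral is bounded above by $2C_2\log 2$ and below by $-\frac{N-1}{2}\log 2$. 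Exponentiating yields \eqref{eq1} with $C_4:=\max\{2^{2C_2},2^{(N-1)/2}\}$.

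\textbf{Step 2: proof of \eqref{eq2} and \eqref{eq3}.} A direct change of variables in \eqref{ulambda} gives, for $R\in[1,2]$,
\begin{equation*}
\int_{B_R}|\nabla w^\lambda(x)|^2\,dx=\frac{\lambda^{2}}{\mathcal H(\lambda)}\int_{B_R}|(\nabla u)(\lambda x)|^2\,dx=\frac{\lambda^{1-N}}{\mathcal H(\lambda)}\int_{B_{R\lambda}}|\nabla u(y)|^2\,dy,
\end{equation*}
and similarly
\begin{equation*}
\int_{B_1}|\nabla w^{R\lambda}(x)|^2\,dx=\frac{(R\lambda)^{1-N}}{\mathcal H(R\lambda)}\int_{B_{R\lambda}}|\nabla u(y)|^2\,dy.
\end{equation*}
Dividing the two identities and using $R\le 2$ together with the upper bound $\mathcal H(R\lambda)\le C_4\mathcal H(\lambda)$ from \eqref{eq1}, we obtain
\begin{equation*}
\int_{B_R}|\nabla w^\lambda|^2\,dx=R^{N-1}\frac{\mathcal H(R\lambda)}{\mathcal H(\lambda)}\int_{B_1}|\nabla w^{R\lambda}|^2\,dx\le 2^{N-1}C_4\int_{B_1}|\nabla w^{R\lambda}|^2\,dx,
\end{equation*}
which is \eqref{eq2}. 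The estimate \eqref{eq3} is proved the same way, with the scaling of an $L^2$-volume integral producing a factor $R^{N+1}\le 2^{N+1}$ in place of $R^{N-1}$, and again using $\mathcal H(R\lambda)\le C_4\mathcal H(\lambda)$.

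\textbf{Main obstacle.} There is no serious obstacle here: the lemma is essentially a bookkeeping consequence of the two-sided bound on $\mathcal N$ established in Lemmas \ref{LemN'} and \ref{LemNlimitata}. The only point requiring a little care is making sure that, for $\lambda\in(0,r_0/2)$ and $R\in[1,2]$, all the radii $R\lambda$ stay inside $(0,r_0)$ so that the bounds \eqref{Nlimitatafrombelow} and \eqref{Nlimitata} apply on the whole interval of integration; the restriction $\lambda<r_0/2$ is exactly what guarantees this.
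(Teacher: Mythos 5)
Your proposal is correct and follows essentially the same route as the paper: integrating $\mathcal H'/\mathcal H=2\mathcal N/r$ over $(\lambda,R\lambda)$ with the two-sided bounds \eqref{Nlimitatafrombelow} and \eqref{Nlimitata} to get \eqref{eq1} (even the constant $C_4=\max\{4^{C_2},2^{(N-1)/2}\}$ matches), and then deducing \eqref{eq2} and \eqref{eq3} by the same scaling identities. No issues.
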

\begin{proof}
By \eqref{Nlimitatafrombelow}, \eqref{Nlimitata}, and \eqref{H'2}, it follows that
\begin{equation*}
-\frac{N-1}{2r}\leq \frac{\mathcal{H}'(r)}{\mathcal{H}(r)}=\frac{2\mathcal{N}(r)}{r}\leq \frac{2C_2}{r}\quad \text{for any $r\in (0,r_0)$}.
\end{equation*}
Let $R\in (1,2]$. For any $\lambda<r_0/R$, integrating over
$(\lambda,R\lambda)$ the above inequality and recalling that $R\leq
2$, we obtain
\[
2^{(1-N)/2}\mathcal{H}(\lambda)\leq \mathcal{H}(R\lambda)\leq 4^{C_2}\mathcal{H}(\lambda)\quad \text{for any $\lambda\in (0,r_0/R)$}.
\]
The above estimates  trivially hold also for $R=1$,
hence \eqref{eq1} with $C_4=\max\{4^{C_2},
2^{(N-1)/2}\}$  is established. 

For every $\lambda\in (0, r_0/2)$ and $R\in [1,2]$, \eqref{eq1} yields
\begin{equation*}
\begin{split}
\int_{B_R}|\nabla w^\lambda(x)|^2\,dx &= \frac{\lambda^{1-N}}{\mathcal{H}(\lambda)}\int_{B_{R\lambda}}|\nabla u(x)|^2\,dx\\
&=R^{N-1}\frac{\mathcal{H}(R\lambda)}{\mathcal{H}(\lambda)}\int_{B_1}|\nabla w^{R\lambda}(x)|^2\,dx\leq R^{N-1}C_4 \int_{B_1}|\nabla w^{R\lambda}(x)|^2\,dx,
\end{split}
\end{equation*}
thus proving \eqref{eq2}. A similar argument allows deducing
\eqref{eq3}  from \eqref{eq1}.
\end{proof}
\begin{lem}\label{lemma2}
For every $\lambda\in (0,r_0)$, let $w^\lambda$ be as in \eqref{ulambda}. Then there exist $M>0$ and $\lambda_0>0$ such that, for any $\lambda\in (0,\lambda_0)$, there exists $R_\lambda\in [1,2]$ such that 
\[
\int_{\partial B_{R_\lambda}}|\nabla w^\lambda|^2\,dS\leq M\int_{B_{R_\lambda}}|\nabla w^\lambda(x)|^2\,dx.
\]
\end{lem}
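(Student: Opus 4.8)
The plan is to obtain a uniform two-sided control of the Dirichlet energy of $w^\lambda$ on the annulus $B_2\setminus B_1$, and then conclude by an averaging argument in the radial variable.

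First I would establish a uniform upper bound: a constant $A>0$, independent of $\lambda$, with $\int_{B_2}|\nabla w^\lambda|^2\,dx\le A$ for all $\lambda\in(0,r_0/2)$. This follows from scaling and \eqref{nablaeta} applied at radius $2\lambda$, which give $\int_{B_2}|\nabla w^\lambda|^2\,dx\le 2^N\,\frac{\mathcal H(2\lambda)}{\mathcal H(\lambda)}\big(\mathcal N(2\lambda)+2\lambda\,\omega(2\lambda)\big)$, together with the uniform bounds $\mathcal H(2\lambda)/\mathcal H(\lambda)\le C_4$ (doubling, \eqref{eq1}), $\mathcal N(2\lambda)\le C_2$ (by \eqref{Nlimitata}), and $2\lambda\,\omega(2\lambda)<\tfrac{N-1}{4}$ (by \eqref{eq:bound-omega}); alternatively one may combine \eqref{eq2} with \eqref{gradwlambdalimitato}.

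The crucial step is the matching uniform lower bound: there exist $c_0>0$ and $\lambda_0\in(0,r_0/2]$ such that $\int_{B_1}|\nabla w^\lambda|^2\,dx\ge c_0$ for all $\lambda\in(0,\lambda_0)$. I would prove this by contradiction. If it fails, pick $\lambda_n\to0^+$ with $\int_{B_1}|\nabla w^{\lambda_n}|^2\,dx\to0$; by Lemma \ref{wlambdlalimitata} the $w^{\lambda_n}$ are bounded in $H^1(B_1)$, so up to a subsequence $w^{\lambda_n}\rightharpoonup w$ weakly in $H^1(B_1)$, and weak lower semicontinuity of the Dirichlet integral forces $w$ to be constant on $B_1$. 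On the other hand, by Remark \ref{rem:mosco} the limit $w$ lies in $H^1_{\tilde\Gamma}(B_1)$, and since $\tilde\Gamma\cap B_1$ has positive $N$-dimensional measure the only such constant is $0$; hence $w\equiv0$. But compactness of the trace operator $H^1(B_1)\to L^2(\partial B_1)$ and the normalization \eqref{eq:20} give $\int_{\partial B_1}|w|^2\,dS=\lim_n\int_{\partial B_1}|w^{\lambda_n}|^2\,dS=1$, a contradiction.

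Finally I would conclude by averaging. For $\lambda\in(0,\lambda_0)$ we have $B_2\subset B_{\lambda^{-1}\hat R}$ (since $\lambda_0\le r_0/2<\hat R/2$), hence $\nabla w^\lambda\in L^2(B_2)$ and, by the coarea formula, $R\mapsto\int_{\partial B_R}|\nabla w^\lambda|^2\,dS$ is in $L^1(1,2)$ with $\int_1^2\big(\int_{\partial B_R}|\nabla w^\lambda|^2\,dS\big)\,dR=\int_{B_2\setminus B_1}|\nabla w^\lambda|^2\,dx\le A$. By Chebyshev's inequality there is $R_\lambda\in[1,2]$ with $\int_{\partial B_{R_\lambda}}|\nabla w^\lambda|^2\,dS\le 2A$; since $R_\lambda\ge1$, the lower bound gives $\int_{B_{R_\lambda}}|\nabla w^\lambda|^2\,dx\ge\int_{B_1}|\nabla w^\lambda|^2\,dx\ge c_0$, so $\int_{\partial B_{R_\lambda}}|\nabla w^\lambda|^2\,dS\le\tfrac{2A}{c_0}\int_{B_{R_\lambda}}|\nabla w^\lambda|^2\,dx$, i.e.\ the statement with $M=2A/c_0$. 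The main obstacle is the lower bound — excluding that the blow-up family concentrates all of its energy outside $B_1$ — which is precisely where the normalization \eqref{eq:20}, the $H^1$-bound of Lemma \ref{wlambdlalimitata}, and the Mosco convergence of the complements of the cracks have to be combined.
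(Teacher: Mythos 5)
Your proof is correct, and its hard core --- excluding that the blow-up family loses all its energy on $B_1$ by combining the $H^1$-bound of Lemma \ref{wlambdlalimitata}, weak lower semicontinuity, the Mosco convergence of Remark \ref{rem:mosco}, and the trace normalization \eqref{eq:20} --- is exactly the argument the paper uses. Where you diverge is in how the radius $R_\lambda$ is produced. The paper runs a single contradiction: assuming the conclusion fails for all $r\in[1,2]$ gives the differential inequality $f_\lambda'(r)>Mf_\lambda(r)$ for $f_\lambda(r)=\int_{B_r}|\nabla w^\lambda|^2\,dx$, whence $f_{\lambda_n}(2)>e^{M}f_{\lambda_n}(1)$; combined with the uniform bound on $B_2$ and letting $M\to\infty$ this forces $\liminf_{\lambda\to0^+}f_\lambda(1)=0$, which is then refuted by the compactness argument. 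You instead prove directly the two uniform bounds $\int_{B_1}|\nabla w^\lambda|^2\,dx\ge c_0$ and $\int_{B_2}|\nabla w^\lambda|^2\,dx\le A$ (the latter from \eqref{eq2} and \eqref{gradwlambdalimitato}, as in the paper's Lemma \ref{doublingwlambda}), and then select $R_\lambda$ by a Chebyshev/averaging argument on the annulus. Your route is slightly more elementary (no exponential differential inequality) and yields as a by-product a uniform bound $\int_{\partial B_{R_\lambda}}|\nabla w^\lambda|^2\,dS\le 2A$ on the boundary integral itself, which is essentially what the subsequent Lemma \ref{gradwlambdaRlambdabounded} extracts from the paper's version by a further use of doubling; the paper's Gronwall-type formulation, on the other hand, isolates the statement in the exact form in which it is quoted later. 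Both arguments are complete and rest on the same key compactness step.
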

\begin{proof}
  From Lemma \ref{wlambdlalimitata} we know that the family 
  $\{w^\lambda\}_{\lambda\in (0,r_0)}$ is bounded in
  $H^1(B_1)$. Moreover Lemma \ref{doublingwlambda} implies that
  the set $\{w^\lambda\}_{\lambda\in (0,r_0/2)}$ is bounded in
  $H^1(B_2)$ and hence
\begin{equation}\label{limsupgrad}
\limsup_{\lambda\rightarrow 0^+} \int_{B_2} |\nabla w^\lambda(x)|^2 dx<+\infty.
\end{equation}
For every $\lambda\in (0,r_0/2)$, the function
$f_\lambda(r)=\int_{B_r}|\nabla w^\lambda(x)|^2 dx$ is absolutely continuous in $[0,2]$ and its distributional derivative is given by 
\[
f'_\lambda(r)=\int_{\partial B_r}|\nabla w^\lambda|^2 dS\quad \text{for a.e. $r\in (0,2)$}.
\]
We argue by contradiction and assume that for any $M>0$ there exists a sequence $\lambda_n\rightarrow 0^+$ such that 
\begin{equation*}
\int_{\partial B_r} |\nabla w^{\lambda_n}|^2dS>M\int_{B_r} |\nabla
w^{\lambda_n}(x)|^2 dx\quad\text{for all $r\in [1,2]$ and $n\in{\mathbb
  N}$},
\end{equation*}
i.e.
\begin{equation}\label{f'lambdan>Mflambdan}
f'_{\lambda_n}(r)>Mf_{\lambda_n} (r)\quad\text{for a.e. $r\in [1,2]$ and for every $n\in\mathbb{N}$}.
\end{equation}
Integration of \eqref{f'lambdan>Mflambdan} over $[1,2]$ yields
$f_{\lambda_n}(2)>e^M f_{\lambda_n}(1)$ for every $n\in\mathbb{N}$
and consequently
\[
\limsup_{n\rightarrow +\infty}f_{\lambda_n}(1)\leq e^{-M} \cdot \limsup_{n\rightarrow +\infty}f_{\lambda_n}(2).
\]
It follows that
\[
\liminf_{\lambda\rightarrow 0^+}f_\lambda(1)\leq e^{-M}\cdot \limsup_{\lambda\rightarrow 0^+}f_\lambda(2)\quad \text{for all $M>0$}.
\]
Therefore, letting $M\rightarrow +\infty$ and taking into account \eqref{limsupgrad}, we obtain that 
$\liminf_{\lambda\rightarrow 0^+}f_\lambda(1)=0$ i.e. 
\begin{equation}\label{eq:19}
\liminf_{\lambda\rightarrow 0^+} \int_{B_1} |\nabla w^\lambda(x)|^2 dx=0.
\end{equation}
From \eqref{eq:19} and boundedness of $\{w^\lambda\}_{\lambda\in (0,r_0)}$ in
  $H^1(B_1)$ there exist a sequence $\tilde{\lambda}_n\rightarrow 0$
  and  some $w\in
H^1(B_1)$
  such that
$w^{\tilde{\lambda}_n}\rightharpoonup w$ in $H^1(B_1)$  and 
\begin{equation}\label{gradlambdatilde0}
\lim_{n\rightarrow +\infty}\int_{B_1}|\nabla w^{\tilde{\lambda}_n}(x)|^2 dx=0.
\end{equation}
The compactness of the trace map from $H^1(B_1)$
to $L^2(\partial B_1)$ and \eqref{eq:20} imply that 
\begin{equation}\label{eq:21}
\int_{\partial B_1}|w|^2 dS=1.
\end{equation}
 Moreover, by weak lower semicontinuity and \eqref{gradlambdatilde0}, 
\begin{equation*}
\int_{B_1} |\nabla w(x)|^2 dx\leq \lim_{n\rightarrow +\infty}\int_{B_1} |\nabla w^{\tilde{\lambda}_n}(x)|^2 dx=0.
\end{equation*}
 Hence $w\equiv \text{\rm const}$ in $B_1$. On the other hand, in view
of Remark \ref{rem:mosco},   $w\in H^1_{\tilde\Gamma}(B_1)$ so that
$w\equiv 0$ in $B_1$,
thus contradicting \eqref{eq:21}.
\end{proof}
\begin{lem}\label{gradwlambdaRlambdabounded}
Let $w^\lambda$ be as in \eqref{ulambda} and $R_\lambda$ be as in  Lemma \ref{lemma2}. Then there exists $\overline{M}$ such that 
\begin{equation*}
\int_{\partial B_1} |\nabla w^{\lambda R_\lambda}|^2\,dS\leq \overline{M} \quad\text{for any $0<\lambda<\min\Bigl\{\lambda_0,\frac{r_0}{2}\Bigr\}$}.
\end{equation*}
\end{lem}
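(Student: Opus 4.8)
The plan is to prove the estimate by a straightforward scaling computation that reduces the quantity $\int_{\partial B_1}|\nabla w^{\lambda R_\lambda}|^2\,dS$ to the three already-established facts: the estimate of Lemma \ref{lemma2}, the doubling inequality \eqref{eq1}, and the uniform $H^1(B_2)$-bound for the family $\{w^\lambda\}$.

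First I would write $\mu=\lambda R_\lambda$ and relate $w^\mu$ to $w^\lambda$. Directly from definition \eqref{ulambda} one has $w^{\mu}(x)=\sqrt{\mathcal H(\lambda)/\mathcal H(\mu)}\;w^\lambda(R_\lambda x)$, hence $\nabla w^{\mu}(x)=R_\lambda\sqrt{\mathcal H(\lambda)/\mathcal H(\mu)}\,(\nabla w^\lambda)(R_\lambda x)$. Performing the change of variables $y=R_\lambda x$ on the sphere $\partial B_1$ — recalling that $\partial B_r\subset\R^{N+1}$ is $N$-dimensional, so its surface measure scales with the $N$-th power of the radius — gives
\begin{equation*}
\int_{\partial B_1}|\nabla w^{\mu}|^2\,dS=\frac{\mathcal H(\lambda)}{\mathcal H(\mu)}\,R_\lambda^{\,2-N}\int_{\partial B_{R_\lambda}}|\nabla w^\lambda|^2\,dS.
\end{equation*}
Next I would apply Lemma \ref{lemma2} to bound the last integral by $M\int_{B_{R_\lambda}}|\nabla w^\lambda|^2\,dx$, obtaining
\begin{equation*}
\int_{\partial B_1}|\nabla w^{\lambda R_\lambda}|^2\,dS\leq M\,\frac{\mathcal H(\lambda)}{\mathcal H(\lambda R_\lambda)}\,R_\lambda^{\,2-N}\int_{B_{R_\lambda}}|\nabla w^\lambda|^2\,dx.
\end{equation*}

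It then remains to bound each factor uniformly in $\lambda\in\bigl(0,\min\{\lambda_0,r_0/2\}\bigr)$. From the lower bound in \eqref{eq1} of Lemma \ref{doublingwlambda} (used with $R=R_\lambda\in[1,2]$) one gets $\mathcal H(\lambda)/\mathcal H(\lambda R_\lambda)\leq C_4$; since $N\geq 2$ and $R_\lambda\geq 1$ one has $R_\lambda^{\,2-N}\leq 1$; and $\int_{B_{R_\lambda}}|\nabla w^\lambda|^2\,dx\leq\int_{B_2}|\nabla w^\lambda|^2\,dx$, which is bounded uniformly in $\lambda\in(0,r_0/2)$ — this is exactly the boundedness of $\{w^\lambda\}$ in $H^1(B_2)$ already noted in the proof of Lemma \ref{lemma2} (a consequence of Lemma \ref{wlambdlalimitata} and Lemma \ref{doublingwlambda}), or alternatively it follows from \eqref{eq2} combined with \eqref{gradwlambdalimitato}. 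Multiplying the three bounds yields the conclusion with $\overline M:=C_4\,M\,\sup_{\lambda\in(0,r_0/2)}\int_{B_2}|\nabla w^\lambda|^2\,dx$.

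This argument is essentially bookkeeping, so I do not expect a serious obstacle; the only points requiring care are getting the correct power $R_\lambda^{2-N}$ in the surface-measure change of variables and checking that every constant produced along the way is genuinely independent of $\lambda$, which is guaranteed because $\lambda R_\lambda<r_0$ keeps us in the range where Lemmas \ref{wlambdlalimitata}, \ref{doublingwlambda} and \ref{lemma2} apply.
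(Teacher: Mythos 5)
Your proposal is correct and follows essentially the same route as the paper: the same scaling identity producing the factor $R_\lambda^{2-N}\mathcal H(\lambda)/\mathcal H(\lambda R_\lambda)$, then Lemma \ref{lemma2}, the doubling estimate \eqref{eq1}, and the uniform gradient bound; the only cosmetic difference is that the paper closes the argument by applying \eqref{eq2} with $R=R_\lambda$ to return to $\int_{B_1}|\nabla w^{\lambda R_\lambda}|^2\,dx$ and invoke Lemma \ref{wlambdlalimitata}, whereas you bound $\int_{B_{R_\lambda}}|\nabla w^\lambda|^2\,dx$ directly by the uniform $H^1(B_2)$ bound, which rests on exactly the same two lemmas.
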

\begin{proof}
Since 
\begin{equation*}
  \int_{\partial B_1} |\nabla w^{\lambda R_\lambda}|^2\,dS=\frac{\lambda^2 R_\lambda^{2-N}}{\mathcal{H}(\lambda R_\lambda)}\int_{\partial B_{R_\lambda}}|\nabla u(\lambda x)|^2\,dS=\frac{R_\lambda^{2-N}\mathcal{H}(\lambda)}{\mathcal{H}(\lambda R_\lambda)}\int_{\partial B_{R_\lambda}}|\nabla w^\lambda|^2\,dS,
\end{equation*}
from  \eqref{eq1}, \eqref{eq2}, Lemma \ref{lemma2}, Lemma
\ref{wlambdlalimitata}, and the fact that $1\leq R_\lambda\leq 2$, we
deduce that, for every $0<\lambda<\min\{\lambda_0,\frac{r_0}{2}\}$,
\begin{equation*}
\int_{\partial B_1} |\nabla w^{\lambda R_\lambda}|^2\,dS \leq C_4 M \int_{B_{R_\lambda}}|\nabla w^{\lambda}(x)|^2\,dx\leq 2^{N-1}C_4^2 M\int_{B_1}|\nabla w^{\lambda R_\lambda}(x)|^2\,dx\leq \overline{M}<+\infty,
\end{equation*}
thus completing the proof. 
\end{proof}
\begin{lem}\label{Lemmablow-up}
Let 
$u\in H^1(B_{\hat{R}})\setminus\{0\}$ be a non-trivial weak solution to
  \eqref{eq:CPu} with $f$ satisfying  either \eqref{xi0}\eqref{xi} or \eqref{eta0}\eqref{eta}. Let $\gamma$ be as in Lemma \ref{Lemlimesiste}. Then
\begin{itemize}
\item[$(i)$] there exists $k_0\in\mathbb{N}\setminus\{0\}$
 such that $\gamma=\frac{k_0}2$; 
\item[$(ii)$] for every sequence $\lambda_n \rightarrow 0^+$, there
  exist a subsequence 
$\{\lambda_{n_k}\}_{k\in\mathbb{N}}$ and an eigenfunction $\psi$ of
 problem \eqref{eq:1} 
associated with the eigenvalue $\mu_{k_0}$
such that $\Vert \psi\Vert_{L^2(\mathbb{S}^N)}=1$ and 
\begin{equation}\label{eq:29}
\frac{u(\lambda_{n_k}x)}{\sqrt{\mathcal{H}(\lambda_{n_k})}}\rightarrow
|x|^\gamma\psi\biggl(\frac{x}{|x|}\biggr)
\quad\text{strongly in $H^1(B_1)$}.
\end{equation}
\end{itemize} 
\end{lem}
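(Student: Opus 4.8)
The plan is to carry out an Almgren-type blow-up analysis for the rescaled functions $w^\lambda$ defined in \eqref{ulambda}, the new ingredient being a detour through the well-chosen radii $R_\lambda$ of Lemma~\ref{lemma2}, needed to recover the boundary-gradient control that the crack geometry otherwise destroys. Given a sequence $\lambda_n\to0^+$, Lemma~\ref{wlambdlalimitata} together with the doubling estimates of Lemma~\ref{doublingwlambda} (which in particular make $\{w^\lambda\}$ bounded in $H^1(B_2)$) allow me to assume, after passing to a subsequence, that $w^{\lambda_n}\rightharpoonup w$ weakly in $H^1(B_2)$ and strongly in $L^2(B_2)$ and in $L^2(\partial B_1)$; by \eqref{eq:20} one has $\int_{\partial B_1}w^2\,dS=1$, so $w\not\equiv0$. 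I then set $R_n:=R_{\lambda_n}\in[1,2]$ and $v^n:=w^{\lambda_n R_n}$, noticing that $v^n(x)=w^{\lambda_n}(R_nx)\,\sqrt{\mathcal H(\lambda_n)/\mathcal H(\lambda_n R_n)}$; Lemma~\ref{gradwlambdaRlambdabounded} gives $\int_{\partial B_1}|\nabla v^n|^2\,dS\le\overline{M}$ and Lemma~\ref{wlambdlalimitata} gives $H^1(B_1)$-boundedness of $\{v^n\}$, so after a further subsequence $R_n\to\bar R\in[1,2]$ and $v^n\rightharpoonup v$ in $H^1(B_1)$.

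The second step is to upgrade this to strong $H^1(B_1)$-convergence of $\{v^n\}$ and to identify the limiting problem. Each $v^n$ weakly solves $-\Delta v^n=(\lambda_n R_n)^2 f(\lambda_n R_n\,\cdot)\,v^n$ in $B_1\setminus\Gamma_{\lambda_n R_n}$ with $v^n=0$ on $\Gamma_{\lambda_n R_n}$, cf.\ \eqref{eq:22}. Since $\Gamma_{\lambda_n R_n}$ converges in the Mosco sense of Remark~\ref{rem:mosco} to the half-hyperplane $\tilde\Gamma$ of \eqref{Gammatilde}, and the potential $(\lambda_n R_n)^2 f(\lambda_n R_n\,\cdot)$ tends to $0$ (as follows from the Hardy inequality of Lemma~\ref{lemmaHardy} together with \eqref{xi0} or \eqref{eta0}), passing to the limit in the weak formulation tested against $\phi\in C_c^\infty(B_1\setminus\tilde\Gamma)$ shows that $v\in H^1_{\tilde\Gamma}(B_1)$ is harmonic in $B_1\setminus\tilde\Gamma$. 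Testing the equation for $v^n$ with $v^n$ itself (the crack contributions dropping out since $v^n=0$ on $\Gamma_{\lambda_n R_n}$) and using that the bound of Lemma~\ref{gradwlambdaRlambdabounded} forces $\partial_\nu v^n\rightharpoonup\partial_\nu v$ in $L^2(\partial B_1)$ while $v^n\to v$ strongly in $L^2(\partial B_1)$, I obtain $\int_{B_1}|\nabla v^n|^2\,dx\to\int_{\partial B_1}v\,\partial_\nu v\,dS=\int_{B_1}|\nabla v|^2\,dx$, hence $v^n\to v$ strongly in $H^1(B_1)$ and $\int_{\partial B_1}v^2\,dS=1$.

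Third, I would deduce homogeneity from a constant-frequency argument and read off $k_0$. A scaling computation gives $\mathcal N_{v^n}(r)=\mathcal N(\lambda_n R_n r)\to\gamma$ for every fixed $r\in(0,1)$ by Lemma~\ref{Lemlimesiste}, and the strong convergence of $v^n$ forces the Almgren frequency of the harmonic function $v$ to equal $\gamma$ identically on $(0,1)$; differentiating, the vanishing of the Cauchy--Schwarz-type summand yields $\partial_\nu v(r\theta)=\frac\gamma r\,v(r\theta)$, i.e.\ $v(x)=|x|^\gamma\psi(x/|x|)$ with $\psi:=v|_{\partial B_1}\in H^1_0(\mathbb S^N\setminus\Sigma)$ and $\|\psi\|_{L^2(\mathbb S^N)}=1$, and inserting this into $-\Delta v=0$ shows that $\psi$ solves \eqref{eq:1} with $\mu=\gamma(\gamma+N-1)$. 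Thus $\gamma(\gamma+N-1)=\mu_{k_0}$ for some $k_0\in\mathbb N\setminus\{0\}$, and comparing with the explicit formula \eqref{eq:2} (recalling $\gamma\ge0$) gives $\gamma=k_0/2$, which proves $(i)$. For $(ii)$, from $\mathcal H(\lambda_n)/\mathcal H(\lambda_n R_n)\to\bar R^{-2\gamma}$ (a consequence of \eqref{H'2} and $\mathcal N(\lambda_n t)\to\gamma$) one gets $v^n\rightharpoonup\bar R^{-\gamma}w(\bar R\,\cdot)$, hence $v(x)=\bar R^{-\gamma}w(\bar Rx)$; since $v$ is homogeneous of degree $\gamma$, this forces $w(x)=|x|^\gamma\psi(x/|x|)$ on $B_1$. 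Finally $\int_{B_1}|\nabla w^{\lambda_n}|^2\,dx=\mathcal N(\lambda_n)+\lambda_n^2\int_{B_1}f(\lambda_n\,\cdot)\,(w^{\lambda_n})^2\,dx\to\gamma=\int_{B_1}|\nabla w|^2\,dx$ (the last equality because $\psi$ is a $\mu_{k_0}$-eigenfunction of unit $L^2(\mathbb S^N)$-norm), so weak convergence of $w^{\lambda_n}$ to $w$ in $H^1(B_1)$ upgrades, via norm convergence in the Hilbert space, to the strong convergence \eqref{eq:29}.

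I expect the genuine difficulty to lie in the strong $H^1$-convergence: because the rescaled crack $\Gamma_\lambda$ always meets $\partial B_1$, there is no a priori bound for $\partial_\nu w^\lambda$ on $\partial B_1$, which is exactly why one must pass through the radii $R_\lambda$ of Lemma~\ref{lemma2}, along which Lemma~\ref{gradwlambdaRlambdabounded} does provide such a bound; once the blow-up limit is shown to be homogeneous, the frequency identity transfers the strong convergence back to the original scaling $w^{\lambda_n}$.
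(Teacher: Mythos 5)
Your proposal is correct and follows essentially the same route as the paper: blow-up along the radii $R_\lambda$ of Lemma \ref{lemma2}, Mosco convergence to the straight crack, strong $H^1$-convergence of $w^{\lambda_n R_n}$ via the boundary gradient bound of Lemma \ref{gradwlambdaRlambdabounded}, constant Almgren frequency forcing homogeneity of the limit, and transfer back to $w^{\lambda_n}$ through the ratio $\mathcal H(\lambda_n R_n)/\mathcal H(\lambda_n)$. Your only (harmless) deviations are shortcuts at the end — reading $\varphi(r)=r^\gamma$ directly from $\zeta(r)=\gamma/r$ instead of solving the ODE and discarding the non-$H^1$ branch, and upgrading $w^{\lambda_n}$ to strong convergence via the energy identity $\int_{B_1}|\nabla w|^2=\gamma$ rather than the paper's dominated-convergence computation.
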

\begin{proof}
For $\lambda\in (0,\min\{r_0,\lambda_0\})$, let $w^\lambda$ be as in
\eqref{ulambda} and $R_\lambda$
  be as in Lemma \ref{lemma2}. Let $\lambda_n \rightarrow 0^+$. By Lemma \ref{wlambdlalimitata}, we have that the set
  $\{w^{\lambda R_\lambda}: \lambda\in (0,\min\{r_0/2,\lambda_0\})\}$ is bounded in
  $H^1(B_1)$. Then there exists a  subsequence
  $\{\lambda_{n_k}\}_k$ 
  such that
  $w^{\lambda_{n_k}R_{\lambda_{n_k}}}\rightharpoonup w$ weakly in $H^1(B_1)$
  for some function $w\in H^1(B_1)$. The compactness of the trace
  map from $H^1(B_1)$ into $L^2(\partial B_1)$ and \eqref{eq:20} ensure that
\begin{equation}\label{eq:25}  
\int_{\partial B_1}|w|^2 dS=1
\end{equation}
and, consequently, $w\not\equiv
  0$.  Furthermore, in view of Remark \ref{rem:mosco} we have that
  $w\in H^1_{\tilde \Gamma}(B_1)$, where  $\tilde{\Gamma}$ is the set
  defined in \eqref{Gammatilde}. 

 Let  $\phi\in C^\infty_c(B_1\setminus \tilde{\Gamma})$. 
It is easy to verify that $\phi\in C^\infty_c(B_1\setminus \Gamma_\lambda)$
provided $\lambda$ is sufficiently small. Therefore, since
$w^{\lambda_{n_k}R_{\lambda_{n_k}}}$ weakly satisfies equation 
\eqref{eq:22} with $\lambda=\lambda_{n_k}R_{\lambda_{n_k}}$ and, for
sufficiently large $k$, $B_1\subset
B_{(\lambda_{n_k}R_{\lambda_{n_k}})^{-1}\hat R}$, we have that 
\begin{equation}\label{wlambdankRlambdank}
\int_{B_1}\nabla w^{\lambda_{n_k}R_{\lambda_{n_k}}}\cdot\nabla \phi\,dx-
(\lambda_{n_k}R_{\lambda_{n_k}})^2\int_{B_1}f(\lambda_{n_k}R_{\lambda_{n_k}}
x)w^{\lambda_{n_k}R_{\lambda_{n_k}}} \phi\,dx=0
\end{equation}
for $k$ sufficiently large. 

Under the set of assumptions \eqref{xi0}-\eqref{xi}, from
\eqref{Hardy} it follows that
\begin{multline}\label{lambdaftendea01}
\lambda^2\biggl|\int_{B_1}f(\lambda x)w^\lambda(x)\phi(x)\,dx\biggr|
\leq \xi_f(\lambda) \biggl(\int_{B_1}\frac{|w^\lambda(x)|^2}{|x|^2}dx\biggr)^{\!\!1/2}\biggl(\int_{B_1}\frac{|\phi(x)|^2}{|x|^2}dx\biggr)^{\!\!1/2}\\
\leq \frac{4 \xi_f(\lambda)}{(N-1)^2} \biggl(\int_{B_1}|\nabla
w^\lambda|^2 dx+\frac{N-1}{2} \biggr)^{\!\!1/2}\biggl(\int_{B_1}
|\nabla \phi|^2 dx
 \biggr)^{\!\!1/2}=o(1) 
\end{multline}
as $\lambda\rightarrow 0^+$.
Similarly, under assumptions \eqref{eta0}-\eqref{eta}, by scaling, we
obtain that, as $\lambda\rightarrow 0^+$,
\begin{multline}\label{lambdaftendea02}
\lambda^2\biggl|\int_{B_1}f(\lambda x)w^\lambda(x)\phi(x)\,dx\biggr|
\\\leq \eta(\lambda,f) \biggl(\int_{B_1}|\nabla w^\lambda|^2
dx+\frac{N-1}{2}\biggr)^{\!\!1/2}
\biggl(
\int_{B_1}
|\nabla \phi|^2 dx\biggr)^{\!\!1/2}
=o(1).
\end{multline}
The weak convergence  of  $w^{\lambda_{n_k}R_{\lambda_{n_k}}}$ to $w$ in $H^1(B_1)$
  and \eqref{lambdaftendea01}-\eqref{lambdaftendea02} allow passing to the limit in \eqref{wlambdankRlambdank}
thus yielding that $w\in H^1_{\tilde \Gamma}(B_1)$  satisfies
\begin{equation*}
\int_{B_1}\nabla w(x)\cdot\nabla \phi(x)\,dx=0\quad\text{for all } \phi\in
C_c^\infty(B_1\setminus \tilde{\Gamma}),
\end{equation*}
i.e. $w$ weakly solves 
\begin{equation}\label{warmonica}
\left\{\begin{aligned}
    -\Delta w(x)&=0 &&\text{in}\ B_1\setminus \tilde{\Gamma}, \\
w&=0 && \text{on}\ \tilde{\Gamma}.
\end{aligned}\right. 
\end{equation}
We observe that, by classical regularity theory, $w$ is smooth in
$B_1\setminus \tilde{\Gamma}$.

From Lemma \ref{gradwlambdaRlambdabounded} and the density 
of $C^\infty(\overline{B_1}\setminus\tilde{\Gamma})$ in $H^1_{\tilde{\Gamma}}(B_1)$, it follows that
\begin{equation}\label{wlambdankphi}
\int_{B_1}\nabla w^{\lambda_{n_k}R_{\lambda_{n_k}}}\!\cdot\!\nabla \phi\,dx=\lambda_{n_k}^2 R_{\lambda_{n_k}}^2\int_{B_1}f(\lambda_{n_k}R_{\lambda_{n_k}}x)w^{\lambda_{n_k}R_{\lambda_{n_k}}}\phi\,dx+\int_{\partial B_1}\frac{\partial w^{\lambda_{n_k}R_{\lambda_{n_k}}}}{\partial\nu}\phi\,dS
\end{equation}
 for every $\phi\in H^1_{\tilde{\Gamma}}(B_1)$ as well as for every $\phi\in H^1_{\Gamma_{\lambda_{n_k}R_{\lambda_{n_k}}}}(B_1).$
From Lemma \ref{gradwlambdaRlambdabounded} it follows that, up to
a subsequence still denoted as $\{\lambda_{n_k}\}$, 
there exists $g\in L^2(\partial B_1)$ such that 
\begin{equation}\label{eq:24}
\frac{\partial
  w^{\lambda_{n_k}R_{\lambda_{n_k}}}}{\partial\nu}\rightharpoonup
g\quad \text{weakly in $L^2(\partial B_1)$}.
\end{equation}
Passing to the limit in \eqref{wlambdankphi} and taking into account
\eqref{lambdaftendea01}-\eqref{lambdaftendea02}, we then obtain that 
\begin{equation*}
\int_{B_1}\nabla w\cdot\nabla \phi\,dx=\int_{\partial
  B_1}g\,\phi\,dS\quad\text{for every } \phi\in H^1_{\tilde{\Gamma}}(B_1).
\end{equation*}
In particular, taking $\phi=w$ above, we have that 
\begin{equation}\label{eq:23}
  \int_{B_1}|\nabla w|^2\,dx=\int_{\partial
  B_1}g\,w\,dS.
\end{equation}
On the other hand, 
from \eqref{wlambdankphi} with $\phi=
w^{\lambda_{n_k}R_{\lambda_{n_k}}}$,
\eqref{lambdaftendea01}-\eqref{lambdaftendea02}, \eqref{eq:24},
the weak convergence  of  $w^{\lambda_{n_k}R_{\lambda_{n_k}}}$ to $w$ in $H^1(B_1)$
(which implies the strong convergence  of  the traces in $L^2(\partial
B_1)$ by compactness of the trace
  map from $H^1(B_1)$ into $L^2(\partial B_1)$), and \eqref{eq:23} it follows that 
\begin{align*}
\lim_{k\to+\infty}
&\int_{B_1}|\nabla w^{\lambda_{n_k}R_{\lambda_{n_k}}}|^2\,dx\\
&=\lim_{k\to+\infty}
\bigg(\lambda_{n_k}^2
  R_{\lambda_{n_k}}^2\int_{B_1}f(\lambda_{n_k}R_{\lambda_{n_k}}x)|w^{\lambda_{n_k}R_{\lambda_{n_k}}}|^2\,dx+\int_{\partial
  B_1}\frac{\partial
  w^{\lambda_{n_k}R_{\lambda_{n_k}}}}{\partial\nu}w^{\lambda_{n_k}R_{\lambda_{n_k}}}\,dS\bigg)\\
&=
\int_{\partial B_1}gw\,dS=  \int_{B_1}|\nabla w|^2\,dx
\end{align*}
which implies that 
\begin{equation}\label{convergenzaforte}
w^{\lambda_{n_k}R_{\lambda_{n_k}}}\rightarrow w \quad\text{strongly in $H^1(B_1)$}.
\end{equation}
For every $k\in\mathbb{N}$ and $r\in (0,1]$, let 
\[
\mathcal{D}_k(r)= r^{1-N}\int_{B_r} \Bigl(|\nabla w^{\lambda_{n_k}R_{\lambda_{n_k}}}(x)|^2 -\lambda_{n_k}^2 R_{\lambda_{n_k}}^2f(\lambda_{n_k}R_{\lambda_{n_k}}x)|w^{\lambda_{n_k}R_{\lambda_{n_k}}}(x)|^2\Bigr)\,dx\\
\]
and 
\[
\mathcal{H}_k(r)=r^{-N}\int_{\partial B_r}|w^{\lambda_{n_k}R_{\lambda_{n_k}}}|^2\,dS.
\]
We also define, for all $r\in(0,1]$, 
\begin{equation*}
\mathcal{D}_w(r)=r^{1-N}\int_{B_r}|\nabla w|^2\,dx\quad\text{and}\quad
\quad \mathcal{H}_w(r)=r^{-N} \int_{\partial B_r} |w|^2\,dS.
\end{equation*}
A change of variables directly gives 
\begin{equation}\label{Nk}
\mathcal{N}_k(r):=\frac{\mathcal{D}_k(r)}{\mathcal{H}_k(r)}=\frac{\mathcal{D}(\lambda_{n_k}R_{\lambda_{n_k}}r)}{\mathcal{H}(\lambda_{n_k}R_{\lambda_{n_k}}r)}=\mathcal{N}(\lambda_{n_k}R_{\lambda_{n_k}}r)\quad \text{for all $r\in (0,1]$}. 
\end{equation}
From \eqref{convergenzaforte}, \eqref{lambdaftendea01}-\eqref{lambdaftendea02} and  compactness of the trace map from
$H^1(B_r)$ into $L^2(\partial B_r)$, it follows that, for every fixed $r\in (0,1]$,
\begin{equation}\label{Dk-Hk}
\mathcal{D}_k(r)\rightarrow \mathcal{D}_w(r)\quad\text{and}\quad 
\mathcal{H}_k(r)\rightarrow \mathcal{H}_w(r).
\end{equation}
We
observe that $\mathcal H_w(r)>0$ for all $r\in(0,1]$; indeed if, for
some $r\in (0,1]$, $\mathcal{H}_w(r)=0$, then $w=0$ on $\partial B_r$
and, testing \eqref{warmonica} with
$w\in H^1_0(B_r\setminus\tilde{\Gamma})$, we would obtain
$\int_{B_r}|\nabla w|^2\,dx=0$ and hence $w\equiv0$ in $B_r$, thus
contradicting classical unique continuation principles for second order elliptic
equations (see e.g. \cite{Wolff}). 
Therefore the function 
\[
\mathcal{N}_w:(0,1]\to\R,\quad \mathcal{N}_w(r):=\frac{\mathcal{D}_w(r)}{\mathcal{H}_w(r)}
\]
is well defined. Moreover \eqref{Nk}, \eqref{Dk-Hk}, and Lemma
\ref{Lemlimesiste}, imply that, for all $r\in (0,1]$, 
\begin{equation}\label{Nw}
\mathcal{N}_w(r)=\lim_{k\rightarrow\infty} \mathcal{N}(\lambda_{n_k}R_{\lambda_{n_k}}r)=\gamma.
\end{equation}
 Therefore $\mathcal{N}_w$ is constant in $(0,1]$ and hence
 $\mathcal{N}'_w(r)=0$ for any $r\in (0,1)$. Hence, from
 \eqref{warmonica} and Lemma \ref{LemN'} with $f\equiv 0$,  we deduce
 that, for a.e. $r\in(0,1)$, 
\begin{equation*}
0=\mathcal{N}_w'(r)\geq
\nu_1(r)=\frac{2r\bigl[\bigl(\int_{\partial B_r}\bigl|\frac{\partial w}{\partial \nu}\bigr|^2 \,dS\bigr)\bigl(\int_{\partial B_r}|w|^2\,dS\bigr)-\bigl(\int_{\partial B_r}w\frac{\partial w}{\partial\nu}\,dS\bigr)^2\bigr]}{\bigl(\int_{\partial B_r}|w|^2\,dS\bigr)^2}\geq0
\end{equation*}
so that $\bigl(\int_{\partial B_r}\bigl|\frac{\partial w}{\partial \nu}\bigr|^2 \,dS\bigr)\bigl(\int_{\partial B_r}|w|^2\,dS\bigr)-\bigl(\int_{\partial B_r}w\frac{\partial w}{\partial\nu}\,dS\bigr)^2=0$.
 This implies that $w$ and $\frac{\partial w}{\partial\nu}$ have the
same direction as vectors in $L^2(\partial B_r)$ for a.e. $r\in
(0,1)$.   Then there exists 
a function $\zeta=\zeta(r)$, defined   a.e. in $(0,1)$, such that 
$\frac{\partial w}{\partial\nu}(r\theta)=\zeta(r)w(
  r\theta)$ for a.e. $r\in (0,1)$ and for all
  $\theta\in\mathbb{S}^N\setminus\Sigma$.
Multiplying by $w(r\theta)$ and integrating over $\mathbb{S}^N$ we
obtain that 
\[
\int_{\mathbb{S}^N}\frac{\partial w}{\partial\nu}(r\theta)\,w
(r\theta)\,dS=\zeta(r)
\int_{\mathbb{S}^N} w^2(r\theta)\,dS
\]
and hence, in view of \eqref{H'} and \eqref{Hcambiodivar}, 
$\zeta(r)=\frac{\mathcal{H}'_w(r)}{2\mathcal{H}_w(r)}$ for a.e $r\in
(0,1)$. This in particular implies that $\zeta\in
L^1_{\mathrm{loc}}(0,1]$. Moreover, after integration, we obtain 
\begin{equation*}
w(r\theta)=e^{\int_1^r \zeta(s)ds}w(1\theta)=\varphi(r)\psi(\theta)\quad \text{for all $r\in (0,1),\,\theta\in\mathbb{S}^N\setminus\Sigma$}, 
\end{equation*}
where $\varphi(r)=e^{\int_1^r \zeta(s)ds}$ and
$\psi=w\big|_{\sfera^N}$.
The fact that $w\in H^1_{\tilde \Gamma}(B_1)$
 implies that $\psi\in H^1_0(\sfera^N\setminus\Sigma)$; moreover
\eqref{eq:25} yields that 
\begin{equation}\label{eq:26}
\int_{\sfera^N}\psi^2(\theta)\,dS=1.
\end{equation}
 Equation \eqref{warmonica} rewritten in polar coordinates
$r,\theta$ becomes
\[
\biggl(-\varphi''(r)-\frac{N}{r}\varphi'(r)\biggr)\psi(\theta)
-\frac{\varphi(r)}{r^2}\Delta_{\mathbb{S}^N}\psi(\theta)=0 \quad\text{on
}\sfera^N\setminus\Sigma.
\]
The above equation for a fixed $r$  implies that $\psi$ is an
eigenfunction of problem \eqref{eq:1}. Letting $\mu_{k_0}=\frac{k_0(k_0+2N-2)}4$ be the
corresponding eigenvalue, $\varphi$ solves
\[
-\varphi''(r)-\frac{N}{r}\varphi'(r)+\frac{\mu_{k_0}}{r^2} \varphi(r)=0.
\]
Integrating the above equation we obtain that there exist
$c_1,c_2\in\R$ such that 
\[
\varphi(r)=c_1r^{\sigma_{k_0}^+}+c_2 r^{\sigma_{k_0}^-},
\]
where 
\[
\sigma_{k_0}^+=-\frac{N-1}{2}+\sqrt{\biggl(\frac{N-1}{2}\biggr)^2+\mu_{k_0}}=\frac{k_0}2
\]
and 
\[
\sigma_{k_0}^-=-\frac{N-1}{2}-\sqrt{\biggl(\frac{N-1}{2}\biggr)^2+\mu_{k_0}}=
-\big(N-1+\tfrac{k_0}2\big).
\]
Since the function
$|x|^{\sigma_{k_0}^-}\psi\bigl(\frac{x}{|x|}\bigr)\notin
L^{2^\ast}(B_1)$ (where $2^\ast=2(N+1)/(N-1)$), we have that
$|x|^{\sigma_{k_0}^-}\psi\bigl(\frac{x}{|x|}\bigr)$ does not belong
to $H^1(B_1)$; then necessarily $c_2=0$ and $\varphi(r)=c_1r^{k_0/2}$. Since $\varphi(1)=1$, we obtain that $c_1=1$ and then
\begin{equation}\label{c2=0}
w(r\theta)=r^{k_0/2}\psi(\theta),\quad\text{for all $r\in (0,1)$ and $\theta\in\mathbb{S}^N\setminus\Sigma$}.
\end{equation}
Let us now consider the sequence $\{w^{\lambda_{n_k}}\}$. Up to a
further subsequence still denoted by $w^{\lambda_{n_k}}$, we may
suppose that $w^{\lambda_{n_k}}\rightharpoonup \overline{w}$ weakly in
$H^1(B_1)$ for some $\overline{w}\in H^1(B_1)$ and that
$R_{\lambda_{n_k}}\rightarrow \overline{R}$ for some
$\overline{R}\in [1,2]$. Strong convergence of
$w^{\lambda_{n_k}R_{\lambda_{n_k}}}$ in $H^1(B_1)$ implies that, up to
a subsequence, both $w^{\lambda_{n_k}R_{\lambda_{n_k}}}$ and
$|\nabla w^{\lambda_{n_k}R_{\lambda_{n_k}}}|$ are dominated by a
$L^2(B_1)$-function uniformly with respect to $k$. Furthermore, in view of \eqref{eq1}, up to a subsequence we can assume that the limit
\[
\ell:=\lim_{k\rightarrow +\infty} \frac{\mathcal{H}(\lambda_{n_k}R_{\lambda_{n_k}})}{\mathcal{H}(\lambda_{n_k})}
\]
exists and is finite. The Dominated Convergence Theorem then implies
\begin{equation*}
\begin{split}
&\lim_{k\rightarrow +\infty}\int_{B_1} w^{\lambda_{n_k}}(x)v(x)\,dx=\lim_{k\rightarrow +\infty}R_{\lambda_{n_k}}^{N+1} \int_{B_{1/R_{\lambda_{n_k}}}} w^{\lambda_{n_k}}(R_{\lambda_{n_k}}x)v(R_{\lambda_{n_k}}x)\,dx\\
&=\lim_{k\rightarrow +\infty}R_{\lambda_{n_k}}^{N+1} \sqrt{\frac{\mathcal{H}(\lambda_{n_k}R_{\lambda_{n_k}})}{\mathcal{H}(\lambda_{n_k})}}\int_{B_1} \chi_{B_{1/R_{\lambda_{n_k}}}}(x)w^{\lambda_{n_k}R_{\lambda_{n_k}}}(x)v(R_{\lambda_{n_k}}x)\,dx\\
&=\overline{R}^{N+1}\sqrt{\ell}\int_{B_1} \chi_{B_{1/\overline{R}}}(x)w(x)v(\overline{R}x)\,dx=\overline{R}^{N+1}\sqrt{\ell}\int_{B_{1/\overline{R}}}w(x)v(\overline{R}x)\,dx=\sqrt{\ell}\int_{B_1} w(x/\overline{R})v(x)\,dx
\end{split}
\end{equation*}
for any $v\in C^\infty_{\rm c}(B_1)$. By density it is easy to verify
that the previous convergence also holds for all $v\in L^2(B_1)$. We
conclude that $w^{\lambda_{n_k}}\rightharpoonup
\sqrt{\ell}\,w(\cdot/{\overline{R}})$ weakly in $L^2(B_1)$;
as a consequence we have that
$\overline{w}=\sqrt{\ell}\,w\big(\frac\cdot{\overline{R}}\big)$ and 
$w^{\lambda_{n_k}}\rightharpoonup
\sqrt{\ell}\,w(\cdot/{\overline{R}})$ weakly in $H^1(B_1)$.
Moreover 
\begin{equation*}
\begin{split}
&\lim_{k\rightarrow +\infty}\int_{B_1}|\nabla w^{\lambda_{n_k}}(x)|^2\,dx=\lim_{k\rightarrow +\infty}R_{\lambda_{n_k}}^{N+1} \int_{B_{1/R_{\lambda_{n_k}}}} |\nabla w^{\lambda_{n_k}}(R_{\lambda_{n_k}}x)|^2\,dx\\
&=\lim_{k\rightarrow +\infty}R_{\lambda_{n_k}}^{N-1} \frac{\mathcal{H}(\lambda_{n_k}R_{\lambda_{n_k}})}{\mathcal{H}(\lambda_{n_k})}\int_{B_1} \chi_{B_{1/R_{\lambda_{n_k}}}}(x)|\nabla w^{\lambda_{n_k}R_{\lambda_{n_k}}}(x)|^2\,dx\\
&=\overline{R}^{N-1}\ell\int_{B_1} \chi_{B_{1/\overline{R}}}(x)|\nabla w(x)|^2\,dx=\overline{R}^{N-1}\ell\int_{B_{1/\overline{R}}}|\nabla w(x)|^2\,dx=\int_{B_1}|\sqrt{\ell}\, \nabla (w(x/\overline{R}))|^2\,dx.
\end{split}
\end{equation*}
Therefore we conclude that $w^{\lambda_{n_k}}\rightarrow
\overline{w}=\sqrt{\ell}w(\cdot/\overline{R})$ strongly in
$H^1(B_1)$. Furthermore, by \eqref{c2=0} and the fact that
$\int_{\partial B_1}|\overline{w}|^2\,dS=\int_{\partial
  B_1}|w|^2\,dS=1$, we deduce that $\overline{w}=w$.

It remains to prove part (i).  From 
\eqref{c2=0} and \eqref{eq:26} it follows that
$H_w(r)=r^{k_0}$. Therefore 
\eqref{Nw} and
Lemma \ref{LemH'} applied to $w$ imply that 
\[
\gamma=\frac r2\, \frac{H_w'(r)}{H_w(r)}=\frac r2\,\frac{k_0
  \,r^{k_0-1}}{r^{k_0}}=\frac{k_0}2, 
\]
thus completing the proof.
\end{proof}
In order to make more explicit the blow-up result proved above, we are
going to describe the asymptotic behavior of $\mathcal{H}(r)$ as $r\rightarrow 0^+$.
\begin{lem}\label{LemmaesistelimH}
Let $\gamma$ be as in Lemma \ref{Lemlimesiste}. The limit
$\lim_{r\rightarrow 0^+}r^{-2\gamma}\mathcal{H}(r)$ exists and it is finite. 
\end{lem}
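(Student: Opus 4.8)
The plan is to study the rescaled quantity $\Phi(r):=r^{-2\gamma}\mathcal H(r)$ on $(0,r_0)$ and to show that, after adding a suitable absolutely continuous correction which vanishes at the origin, it becomes monotone; the existence of the limit then follows from monotonicity, and its finiteness from the bound \eqref{H<kr}.

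First I would recall that $\mathcal H\in W^{1,1}_{\mathrm{loc}}(0,\hat R)$ by Lemma \ref{LemH'}, that $\mathcal H>0$ on $(0,r_0]$ by Lemma \ref{H>0}, and that $\mathcal H'(r)/\mathcal H(r)=2\mathcal N(r)/r$ for a.e. $r\in(0,\hat R)$ by \eqref{H'2} and the definition of $\mathcal N$. Consequently $\Phi\in W^{1,1}_{\mathrm{loc}}(0,r_0)$ and, for a.e. $r\in(0,r_0)$,
\[
\Phi'(r)=r^{-2\gamma}\mathcal H(r)\left(\frac{\mathcal H'(r)}{\mathcal H(r)}-\frac{2\gamma}{r}\right)=\frac{2\,(\mathcal N(r)-\gamma)}{r}\,\Phi(r).
\]

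Next I would insert the two estimates already at our disposal. By \eqref{N-gamma} we have $\mathcal N(r)-\gamma\ge -C_3\,rF(r)$ for every $r\in(0,r_0)$, with $F(r)=\frac1r\int_0^r\alpha(s)\,ds\ge0$; and by \eqref{H<kr} we have $0<\Phi(r)\le K_1$ for all $r\in(0,r_0)$. Since $\Phi>0$ and $F\ge0$, these give
\[
\Phi'(r)\ge -2C_3\,F(r)\,\Phi(r)\ge -2C_3K_1\,F(r)\qquad\text{for a.e. }r\in(0,r_0).
\]
By \eqref{eq:18}, $F\in L^1(0,r_0)$, so $r\mapsto 2C_3K_1\int_0^r F(s)\,ds$ is absolutely continuous on $[0,r_0]$, nonnegative, and vanishes at $0$. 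Hence the function $r\mapsto \Phi(r)+2C_3K_1\int_0^r F(s)\,ds$ lies in $W^{1,1}_{\mathrm{loc}}(0,r_0)$ with nonnegative distributional derivative, so it is monotone non-decreasing on $(0,r_0)$; being also bounded below by $0$ (because $\mathcal H>0$), it admits a finite limit as $r\to0^+$. Since $\int_0^r F(s)\,ds\to0$ as $r\to0^+$, we conclude that $\lim_{r\to0^+}r^{-2\gamma}\mathcal H(r)$ exists, and it is finite since $0\le\Phi(r)\le K_1$ for all $r\in(0,r_0)$.

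I do not expect a genuine obstacle here beyond careful bookkeeping: the argument is essentially routine once Lemmas \ref{LemH'}, \ref{H>0}, \ref{Lemlimesiste} and the estimates \eqref{N-gamma}, \eqref{H<kr} are in place. The one point requiring attention is the membership $F\in L^1(0,r_0)$, which is precisely \eqref{eq:18} and rests on the full strength of the second integrability conditions in \eqref{xiL1}, \eqref{etaL1} and \eqref{etanablafL1}.
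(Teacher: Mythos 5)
Your proof is correct and follows essentially the same route as the paper: both rest on the identity $\bigl(r^{-2\gamma}\mathcal H(r)\bigr)'=2r^{-2\gamma-1}\mathcal H(r)\int_0^r\mathcal N'(s)\,ds$, the lower bound \eqref{N-gamma} (which encodes the sign decomposition of $\mathcal N'$), the upper bound \eqref{H<kr}, and the integrability \eqref{eq:18}. The only cosmetic difference is that you add the absolutely continuous correction $2C_3K_1\int_0^rF$ to obtain a monotone function before passing to the limit, whereas the paper integrates the identity first and then treats the monotone and absolutely convergent pieces separately; the content is identical.
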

\begin{proof}
Thanks to estimate \eqref{H<kr}, it is enough to prove that the limit exists. By  \eqref{H'2} and Lemma \ref{Lemlimesiste} we have 
\begin{equation}\label{eq:27}
\frac{d}{dr}\frac{\mathcal{H}(r)}{r^{2\gamma}}=2r^{-2\gamma-1}(\mathcal{D}(r)-\gamma\mathcal{H}(r))=2r^{-2\gamma-1}\mathcal{H}(r)\int_0^r \mathcal{N}'(s)\,ds. 
\end{equation}
Let us write $\mathcal N'=\alpha_1+\alpha_2$,
where, using the same notation as in Section
\ref{sec:almgr-type-freq},  
\[
\alpha_1(r)=\mathcal
  N'(r)+C_1\bigg(C_2+\frac{N-1}2\bigg)\alpha(r)\quad\text{and}\quad 
\alpha_2=-C_1\bigg(C_2+\frac{N-1}2\bigg)\alpha(r).
\]
From \eqref{eq:17} we have that  $\alpha_1(r)\geq 0$ for
a.e. $r\in(0,r_0)$.  Moreover \eqref{eq:alpha} and assumptions \eqref{xiL1},
\eqref{etaL1} and \eqref{etanablafL1} ensure that $\alpha_2\in
L^1(0,r_0)$ and 
\begin{equation}\label{eq:28}
\frac1{s}\int_0^s \alpha_2(t)\,dt\in L^1(0,r_0).
\end{equation}
 Integration of \eqref{eq:27}
over $(r,r_0)$ yields
\begin{equation}\label{tuttoinL1}
\frac{\mathcal{H}(r_0)}{r_0^{2\gamma}}-\frac{\mathcal{H}(r)}{r^{2\gamma}}= \int_r^{r_0}2s^{-2\gamma-1}\mathcal{H}(s)\biggl(\int_0^s \alpha_1(t)dt\biggr)\,ds+\int_r^{r_0}2s^{-2\gamma-1}\mathcal{H}(s)\biggl(\int_0^s \alpha_2(t)dt\biggr)\,ds.
\end{equation}
Since $\alpha_1(t)\geq 0$ we have that $\lim_{r\rightarrow
  0^+}\int_r^{r_0} 2s^{-2\gamma-1}\mathcal{H}(s)\biggl(\int_0^s
\alpha_1(t)dt\biggr)\,ds$ exists. On the other hand, \eqref{H<kr} and
\eqref{eq:28} imply that
\begin{equation*}
\biggl|s^{-2\gamma-1}\mathcal{H}(s)\biggl(\int_0^s \alpha_2(t)dt\biggr)\,ds\biggr|\leq K_1s^{-1}\int_0^s \alpha_2(t)\,dt\in L^1(0,r_0)
\end{equation*}
for all $s\in (0,r_0)$, thus  proving that
$s^{-2\gamma-1}\mathcal{H}(s)\bigl(\int_0^s \alpha_2(t)dt\bigr)\in
L^1(0,r_0)$. 
Then we may conclude that both terms in the right hand side of
\eqref{tuttoinL1} admit a limit as $r\rightarrow 0^+$
and at least one of such limits is finite,
 thus completing the proof of the lemma.
\end{proof}
\section{Straightening the domain}\label{sec:straightening-domain}
In order to detect the sharp vanishing order of the function
$\mathcal H$ and to give a more explicit blow-up result,  in this section we construct an auxiliary
equivalent problem by a diffeomorphic deformation of the domain,  
 inspired by \cite{Almgren-type}, see also  \cite{Adolf-Escau} and 
  \cite{Tao}. The purpose of such deformation is to straighten
  the crack; the advantage of working in a domain with a straight crack will
  then rely in the possibility of  separating radial and angular
  coordinates in the Fourier expansion of solutions (see \eqref{vespansion}).

\begin{lem}\label{Straightening}
There exists $\bar{r}\in (0,r_0)$ such that the function
\[
\Xi\colon B_{\bar{r}}\to B_{\bar{r}},
\]
\begin{equation*}
\Xi(y)=\Xi(y',y_N,y_{N+1})=
\begin{cases}
\dfrac{(y',y_N-g(y'),y_{N+1})}{\sqrt{1+\frac{g^2
      (y')-2g(y')y_N}{|y'|^2+y_N^2+y_{N+1}^2}}},&\text{if }y\neq0,\\[15pt]
0,&\text{if }y=0,
\end{cases}
\end{equation*}
is a  $C^1$-diffeomorphism. Furthermore, setting $\Phi=\Xi^{-1}$, we have that
\begin{align}\label{PhiC1}
&\varPhi(
  B_{{r}}\setminus\tilde{\Gamma})=B_{{r}}\setminus\Gamma,
\quad \varPhi^{-1}(B_{r}\setminus \Gamma)=B_{r}\setminus\tilde{\Gamma}\quad\text{for all
  }r\in(0,\bar r),\\
&\label{Phisulbordo}
\varPhi(\partial B_r)= \partial B_r\quad\text{for all
}r\in(0,\bar r),\\
&\label{Phisviluppo}
\varPhi(x)=x+O(|x|^2)\quad\text{and}\quad \mathrm{Jac}\,\varPhi(x)=\mathrm{Id}_{N+1}+O(|x|)\quad \text{as $|x|\rightarrow 0$},\\
&\label{Phi-1sviluppo}
\varPhi^{-1}(y)=y+O(|y|^2)\quad\text{and}\quad \mathrm{Jac}\,\varPhi^{-1}(y)=\mathrm{Id}_{N+1}+O(|y|)\quad \text{as $|y|\rightarrow 0$},\\
&\label{detJac}
\mathrm{det\,Jac}\,\varPhi(x)=1+O(|x|)
\quad\text{and}\quad
\mathrm{det\,Jac}\,\varPhi^{-1}(y)=1+O(|y|)
 \quad \text{as $|x|\rightarrow 0$, $|y|\rightarrow 0$}. 
\end{align}
\end{lem}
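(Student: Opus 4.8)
The plan is to base everything on one algebraic observation: $\Xi$ \emph{preserves the Euclidean norm}. Writing $\tilde y:=(y',y_N-g(y'),y_{N+1})$ and $|y|^2=|y'|^2+y_N^2+y_{N+1}^2$, a direct expansion gives
\[
|\tilde y|^2=|y|^2+g^2(y')-2g(y')y_N=|y|^2\Bigl(1+\tfrac{g^2(y')-2g(y')y_N}{|y|^2}\Bigr),
\]
so that the denominator in the definition of $\Xi$ equals exactly $|\tilde y|/|y|$, hence $\Xi(y)=\tfrac{|y|}{|\tilde y|}\tilde y$ and $|\Xi(y)|=|y|$ for every $y\neq0$ (and trivially for $y=0$). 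Since $g(0)=0$, we have $\tilde y=0$ only at $y=0$, and by \eqref{gO2} the quantity $g^2(y')-2g(y')y_N$ is $O(|y|^3)$; thus one can fix $\bar r\in(0,r_0)$ small enough that $|\tilde y|^2\ge\tfrac12|y|^2$ on $B_{\bar r}$, so $\Xi$ is well defined on $B_{\bar r}$ and, being a composition of $C^2$ maps with nonvanishing denominator, of class $C^1$ on $B_{\bar r}\setminus\{0\}$.

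Next I would check that $\Xi\in C^1(B_{\bar r})$, with $\mathrm{Jac}\,\Xi(y)=\mathrm{Id}_{N+1}+O(|y|)$; this is the delicate point, since the formula conceals a $1/|y|^2$ singularity at the origin. Writing $\Xi=h\,\tilde y$ with $h=(1+\phi)^{-1/2}$ and $\phi(y)=|y|^{-2}\bigl(g^2(y')-2g(y')y_N\bigr)$, one uses $g(0)=\nabla g(0)=0$ and $g\in C^2$ to verify $|\phi(y)|=O(|y|)$, $|\nabla\phi(y)|=O(1)$, $\tilde y(y)=y+O(|y|^2)$, and $\mathrm{Jac}\,\tilde y(y)=\mathrm{Id}_{N+1}+O(|y|)$. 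The product rule $\partial_j\Xi_i=(\partial_j h)\tilde y_i+h\,\partial_j\tilde y_i$ then gives $\partial_j\Xi_i=\delta_{ij}+O(|y|)$, the (possibly non-infinitesimal) factor $\partial_j h$ being killed by $|\tilde y_i|=O(|y|)$. Moreover $\Xi(y)-y=(h(y)-1)\,y-h(y)g(y')e_N=O(|y|^2)$, so $\Xi$ is differentiable at $0$ with $\mathrm{Jac}\,\Xi(0)=\mathrm{Id}_{N+1}$; since the estimate above also gives $\mathrm{Jac}\,\Xi(y)\to\mathrm{Id}_{N+1}$ as $y\to0$, we conclude $\Xi\in C^1(B_{\bar r})$. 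After possibly shrinking $\bar r$ so that $\|\mathrm{Jac}\,\Xi(y)-\mathrm{Id}_{N+1}\|<\tfrac12$ on $B_{\bar r}$, $\mathrm{Jac}\,\Xi$ is invertible everywhere on $B_{\bar r}$.

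I would then establish that $\Xi$ is a bijection of $B_{\bar r}$. Injectivity follows by integrating $\mathrm{Jac}\,\Xi-\mathrm{Id}_{N+1}$ along segments in the convex ball $B_{\bar r}$: $|\Xi(y_1)-\Xi(y_2)-(y_1-y_2)|\le\tfrac12|y_1-y_2|$, hence $|\Xi(y_1)-\Xi(y_2)|\ge\tfrac12|y_1-y_2|$. Surjectivity onto $B_{\bar r}$ is forced by norm preservation: $\Xi(B_{\bar r})\subseteq B_{\bar r}$ since $|\Xi(y)|=|y|$; $\Xi(B_{\bar r})$ is open (being a local diffeomorphism, $\Xi$ is an open map) and closed in $B_{\bar r}$ (if $\Xi(y_n)\to z\in B_{\bar r}$ then $|y_n|=|\Xi(y_n)|\to|z|<\bar r$, so a subsequence of $(y_n)$ converges in $B_{\bar r}$ to a preimage of $z$); and $B_{\bar r}$ is connected, so $\Xi(B_{\bar r})=B_{\bar r}$. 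The same argument gives $\Xi(B_r)=B_r$ for every $r\in(0,\bar r)$. A bijective $C^1$ map with everywhere-invertible Jacobian is a $C^1$-diffeomorphism by the inverse function theorem.

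Finally the properties of $\varPhi=\Xi^{-1}$ follow by translating those of $\Xi$. From $|\Xi(y)|=|y|$ one gets $|\varPhi(x)|=|x|$, i.e. \eqref{Phisulbordo} and the ball identities in \eqref{PhiC1}. For the crack: the last two components of $\Xi(y)$ are $\tfrac{|y|}{|\tilde y|}y_{N+1}$ and $\tfrac{|y|}{|\tilde y|}\bigl(y_N-g(y')\bigr)$, so $\Xi(y)\in\tilde{\Gamma}$ iff $y_{N+1}=0$ and $y_N\ge g(y')$, i.e. iff $y\in\Gamma$; combined with $\Xi(B_r)=B_r$ and injectivity this yields $\Xi(B_r\setminus\Gamma)=B_r\setminus\tilde{\Gamma}$, which is \eqref{PhiC1}. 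The expansions \eqref{Phi-1sviluppo} for $\varPhi^{-1}=\Xi$ are exactly the estimates $\Xi(y)=y+O(|y|^2)$ and $\mathrm{Jac}\,\Xi(y)=\mathrm{Id}_{N+1}+O(|y|)$ proved in the second step; inverting them gives $\varPhi(x)=x+O(|x|^2)$ and $\mathrm{Jac}\,\varPhi(x)=[\mathrm{Jac}\,\Xi(\varPhi(x))]^{-1}=\mathrm{Id}_{N+1}+O(|\varPhi(x)|)=\mathrm{Id}_{N+1}+O(|x|)$, which is \eqref{Phisviluppo}, and \eqref{detJac} is then immediate since the determinant is a polynomial in the entries that equals $1$ at $\mathrm{Id}_{N+1}$. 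The main obstacle is precisely the $C^1$ regularity of $\Xi$ at the origin: once the cancellation of the $1/|y|^2$ singularity is checked using $g(0)=\nabla g(0)=0$ and $g\in C^2$, everything else is bookkeeping organized around the identity $|\Xi(y)|=|y|$.
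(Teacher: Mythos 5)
Your proof is correct and is the fully worked-out version of the argument the paper merely sketches ("local inversion theorem, \eqref{g0=0}, \eqref{gC2}, \eqref{gO2}, and direct calculations"): you correctly identify the norm-preserving structure $\Xi(y)=\tfrac{|y|}{|\tilde y|}\tilde y$ that the denominator is designed to produce, verify the cancellation of the $1/|y|^2$ singularity needed for $C^1$ regularity at the origin, and supply the global injectivity/surjectivity argument that upgrades the local inversion theorem to a diffeomorphism of the whole ball. Nothing further is needed.
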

\begin{proof}
The proof follows from the local inversion theorem,
\eqref{g0=0},\eqref{gC2}, \eqref{gO2}, and direct calculations.  
\end{proof}
 Let $u\in H^1(B_{\hat{R}})$ be a weak solution to \eqref{eq:CPu}. Then 
\begin{equation}\label{v}
v=u\circ\varPhi\in H^1(B_{\bar{r}})
\end{equation}
is a weak solution to 
\begin{equation}\label{eq:CPv}
\left\{\begin{aligned}
-\text{div}(A(x)\nabla v(x))&=\tilde{f}(x)v(x) &&\text{in }B_{\bar{r}}\setminus\tilde{\Gamma}, \\
v&=0 && \text{on}\ \tilde{\Gamma},
\end{aligned}\right. 
\end{equation}
 i.e. 
\begin{equation*}
\left\{\begin{aligned}
&v\in H^1_{\tilde{\Gamma}}(B_{\bar r}), \\
&\int_{B_{\bar r}} A(x)\nabla v(x)\cdot\nabla \varphi(x)\,dx-\int_{B_{\bar
    r}}\tilde f(x)v(x)\varphi(x)\,dx=0\quad\text{for any}\ \varphi\in
C_c^\infty(B_{\bar r}\setminus\tilde\Gamma).
\end{aligned}\right. 
\end{equation*}
where
\begin{equation}\label{eq:def-f}
A(x)=|\mathrm{det\,Jac}\,\varPhi(x)|(\text{Jac}\,\varPhi(x))^{-1}((\text{Jac}\,\varPhi(x))^T)^{-1},\quad
\tilde{f}(x)=|\mathrm{det\,Jac}\,\varPhi(x)|f(\varPhi(x)).
\end{equation}
By Lemma \ref{Straightening} and direct calculations, we obtain that
\begin{equation}\label{tildeA}
A(x)=\text{Id}_{N+1}+ O(|x|)\quad \text{as $|x|\rightarrow 0$}.
\end{equation}
\begin{lem}\label{Hdellav}
Letting $\mathcal{H}$ be as in \eqref{H} and $v=u\circ \varPhi$ as in
\eqref{v}, we have that 
\begin{equation}\label{Hlambda}
\mathcal{H}(\lambda)=(1+O(\lambda))\int_{\mathbb{S}^N}v^2(\lambda\theta)\,dS\quad\text{as
}\lambda\to0^+,
\end{equation}
and 
\begin{equation}\label{gradvlambda}
\frac{\int_{B_1}|\nabla \hat{v}^\lambda(x)|^2 dx}{\mathcal{H}(\lambda)}=(1+O(\lambda))\int_{B_1}|\nabla w^\lambda(y)|^2 dy=O(1) \quad\text{as
}\lambda\to0^+,
\end{equation}
where $w^\lambda$ is defined in \eqref{ulambda} and $\hat{v}^\lambda(x):=v(\lambda x)$. 
\end{lem}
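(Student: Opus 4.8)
The plan is to obtain both identities from the change of variables $x=\varPhi(z)$, using the asymptotic expansions collected in Lemma \ref{Straightening} together with the geometric fact that $\varPhi$ preserves spheres, hence balls. Indeed, \eqref{Phisulbordo} gives $|\varPhi(x)|=|x|$ and $|\varPhi^{-1}(x)|=|x|$ for $|x|<\bar r$, and since $\varPhi$ is a $C^1$-diffeomorphism of $B_{\bar r}$ with $\varPhi(0)=0$, it maps each $B_\rho$ onto $B_\rho$; moreover by \eqref{PhiC1}, $\varPhi(B_\rho\setminus\tilde\Gamma)=B_\rho\setminus\Gamma$, so that, up to the $\mathcal L^{N+1}$-null sets $\tilde\Gamma\cap B_\rho$ and $\Gamma\cap B_\rho$, the map $\varPhi$ transports $B_\rho$ onto $B_\rho$, which is all that matters for the integrals below.

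For \eqref{Hlambda}, I would first recall from \eqref{Hcambiodivar} that $\mathcal H(\lambda)=\int_{\mathbb S^N}u^2(\lambda\sigma)\,dS$. Since $|\varPhi(\lambda\theta)|=\lambda$, for $\theta\in\mathbb S^N$ I write $\varPhi(\lambda\theta)=\lambda\,\omega_\lambda(\theta)$ with $\omega_\lambda(\theta)=\varPhi(\lambda\theta)/\lambda\in\mathbb S^N$; by \eqref{Phisviluppo} one has $\omega_\lambda(\theta)=\theta+O(\lambda)$ uniformly in $\theta$, and, since $\mathrm{Jac}\,\varPhi(x)=\mathrm{Id}_{N+1}+O(|x|)$, the tangential differential of $\omega_\lambda$ at each $\theta$ equals $\mathrm{Id}_{T_\theta\mathbb S^N}+O(\lambda)$. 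Hence, for $\lambda$ small, $\omega_\lambda$ is a diffeomorphism of $\mathbb S^N$ whose surface Jacobian is $1+O(\lambda)$ uniformly. Writing $v(\lambda\theta)=u(\varPhi(\lambda\theta))=u(\lambda\,\omega_\lambda(\theta))$ and changing variables $\sigma=\omega_\lambda(\theta)$ on $\mathbb S^N$ then yields $\int_{\mathbb S^N}v^2(\lambda\theta)\,dS=(1+O(\lambda))\int_{\mathbb S^N}u^2(\lambda\sigma)\,dS=(1+O(\lambda))\mathcal H(\lambda)$, which is \eqref{Hlambda}.

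For \eqref{gradvlambda}, scaling gives $\int_{B_1}|\nabla\hat v^\lambda(x)|^2\,dx=\lambda^{1-N}\int_{B_\lambda}|\nabla v(z)|^2\,dz$, and by the chain rule $\nabla v(z)=(\mathrm{Jac}\,\varPhi(z))^{T}(\nabla u)(\varPhi(z))$, so that $|\nabla v(z)|^2=(\nabla u)(\varPhi(z))^{T}\,\mathrm{Jac}\,\varPhi(z)\,(\mathrm{Jac}\,\varPhi(z))^{T}\,(\nabla u)(\varPhi(z))$. Performing the change of variables $x=\varPhi(z)$, i.e. $z=\varPhi^{-1}(x)$, $dz=|\mathrm{det\,Jac}\,\varPhi^{-1}(x)|\,dx$, and using $|\varPhi^{-1}(x)|=|x|$ together with \eqref{Phi-1sviluppo} and \eqref{detJac}, the transported matrix $\mathrm{Jac}\,\varPhi(\varPhi^{-1}(x))\,(\mathrm{Jac}\,\varPhi(\varPhi^{-1}(x)))^{T}\,|\mathrm{det\,Jac}\,\varPhi^{-1}(x)|$ equals $\mathrm{Id}_{N+1}+O(|x|)$; since $|x|\le\lambda$ on $B_\lambda$, this gives $\int_{B_\lambda}|\nabla v(z)|^2\,dz=(1+O(\lambda))\int_{B_\lambda}|\nabla u(x)|^2\,dx$. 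Dividing by $\mathcal H(\lambda)$ and recognizing $\int_{B_1}|\nabla w^\lambda(y)|^2\,dy=\frac{\lambda^{1-N}}{\mathcal H(\lambda)}\int_{B_\lambda}|\nabla u|^2\,dx$ (directly from \eqref{ulambda}) yields the first equality in \eqref{gradvlambda}, while the bound $O(1)$ is exactly \eqref{gradwlambdalimitato} from Lemma \ref{wlambdlalimitata}. The only mildly delicate point is the uniform-in-$(\theta,\lambda)$ control of the surface Jacobian of $\omega_\lambda$ used for \eqref{Hlambda}; this is routine, being the restriction of the $C^1$-estimate $\mathrm{Jac}\,\varPhi(x)=\mathrm{Id}_{N+1}+O(|x|)$ to the tangent spaces of the spheres $\partial B_\lambda$, and all the remaining steps are standard changes of variables.
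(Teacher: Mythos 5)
Your proposal is correct and follows essentially the same route as the paper: both identities come from the change of variables $x=\varPhi(z)$ together with the expansions \eqref{Phisviluppo}--\eqref{detJac} and, for the $O(1)$ bound, Lemma \ref{wlambdlalimitata}. The only (cosmetic) difference is in \eqref{Hlambda}: you change variables directly on $\mathbb{S}^N$ via the induced diffeomorphism $\omega_\lambda(\theta)=\varPhi(\lambda\theta)/\lambda$ and control its tangential Jacobian, whereas the paper writes $\int_{B_\lambda}u^2\,dx=\int_{B_\lambda}v^2\,|\mathrm{det\,Jac}\,\varPhi|\,dy$ and differentiates in $\lambda$, which sidesteps the tangential-Jacobian estimate; since $|\varPhi(x)|=|x|$ the two Jacobians coincide, so both computations are legitimate and equivalent.
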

\begin{proof}
From  \eqref{PhiC1} and a change of variable  it follows that 
\[
\int_{B_\lambda}u^2(x)\,dx=
\int_{B_\lambda}v^2(y) |\mathrm{det\,Jac}\,\varPhi (y)|\,dy\quad\text{for
  all }\lambda\in(0,\bar r).
\]
Differentiating the above identity with respect to $\lambda$ we obtain that
\[
\int_{\partial B_\lambda}u^2\,dS=
\int_{\partial B_\lambda}v^2 |\mathrm{det\,Jac}\,\varPhi |\,dS\quad\text{for
  a.e. }\lambda\in(0,\bar r).
\]
Hence, by the continuity of $\mathcal H$,  
\[
\mathcal{H}(\lambda)=
\lambda^{-N}\int_{\partial B_\lambda}v^2 |\mathrm{det\,Jac}\,\varPhi |\,dS=
\int_{\mathbb{S}^N} v^2(\lambda\theta)|\mathrm{det\,Jac}\,\varPhi
(\lambda\theta)| dS
\quad\text{for
  all }\lambda\in(0,\bar r), 
\]
which yields \eqref{Hlambda} in view of \eqref{detJac}. 

From \eqref{PhiC1} and a change of variable it also follows that
\[
\frac{\int_{B_1}|\nabla \hat{v}^\lambda(x)|^2 dx}{\mathcal{H}(\lambda)}=\int_{B_1} |\nabla w^\lambda (y) \,\mathrm{Jac}\, \varPhi(\varPhi^{-1}(\lambda y))|^2 |\mathrm{det\,Jac\,} \varPhi^{-1}(\lambda y)| dy
\]
for all $\lambda\in (0,\bar{r})$. 
 The above identity, together with 
\eqref{Phisviluppo}-\eqref{detJac} and the boundedness in $H^1(B_1)$ of
$\{w^\lambda\}$ established in Lemma \ref{wlambdlalimitata},
implies estimate \eqref{gradvlambda}.
\end{proof}
\begin{lem}\label{Lemmablowupdellav}
  Let $v=u\circ \varPhi$ be as in \eqref{v} and let $k_0$ and $\gamma$
  be as in Lemma \ref{Lemmablow-up} (i). Then,
for every sequence $\lambda_n \rightarrow 0^+$, there
  exist a subsequence 
$\{\lambda_{n_k}\}_{k\in\mathbb{N}}$ and an eigenfunction $\psi$ of
 problem \eqref{eq:1} 
associated with the eigenvalue $\mu_{k_0}$
such that $\Vert \psi\Vert_{L^2(\mathbb{S}^N)}=1$, the convergence
\eqref{eq:29} holds and
\[
\frac{v(\lambda_{n_k}\cdot)}{\sqrt{\int_{\mathbb{S}^N}v^2(\lambda_{n_k}\theta) dS}}\rightarrow \psi
\quad\text{strongly in $L^2(\mathbb{S}^N)$}.
\]
\end{lem}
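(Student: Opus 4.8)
The plan is to transport the blow-up limit of Lemma~\ref{Lemmablow-up} through the diffeomorphism $\varPhi$ of Lemma~\ref{Straightening}. Given $\lambda_n\to0^+$, I would first invoke Lemma~\ref{Lemmablow-up}(ii) to obtain a subsequence $\{\lambda_{n_k}\}_{k\in\mathbb{N}}$, the integer $k_0$ with $\gamma=k_0/2$, and an eigenfunction $\psi$ of \eqref{eq:1} associated with $\mu_{k_0}$, normalized by $\|\psi\|_{L^2(\mathbb{S}^N)}=1$, for which \eqref{eq:29} holds. Setting $w^\lambda$ as in \eqref{ulambda} and $W(x):=|x|^{\gamma}\psi(x/|x|)$, this is exactly the statement that $w^{\lambda_{n_k}}\to W$ strongly in $H^1(B_1)$; since $W|_{\mathbb{S}^N}=\psi$, the continuity of the trace operator $H^1(B_1)\to L^2(\mathbb{S}^N)$ then gives $w^{\lambda_{n_k}}|_{\mathbb{S}^N}\to\psi$ strongly in $L^2(\mathbb{S}^N)$.

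Next I would introduce, for small $\lambda>0$, the rescaled map $\varPhi_\lambda(x):=\lambda^{-1}\varPhi(\lambda x)$ on $\overline{B_1}$. Since $\varPhi$ preserves the spheres centred at the origin (see \eqref{Phisulbordo}), $\varPhi_\lambda$ is a $C^1$-diffeomorphism of $\overline{B_1}$ onto itself which restricts to a $C^1$-diffeomorphism $T_\lambda:=\varPhi_\lambda|_{\mathbb{S}^N}$ of $\mathbb{S}^N$ onto itself. From \eqref{Phisviluppo} and \eqref{detJac} one gets, uniformly on $\overline{B_1}$ as $\lambda\to0^+$,
\[
\varPhi_\lambda(x)=x+O(\lambda),\qquad \mathrm{Jac}\,\varPhi_\lambda(x)=\mathrm{Id}_{N+1}+O(\lambda),\qquad \det\mathrm{Jac}\,\varPhi_\lambda(x)=1+O(\lambda);
\]
hence $\varPhi_\lambda\to\mathrm{Id}$ in $C^1(\overline{B_1})$ and $T_\lambda\to\mathrm{Id}$ in $C^1(\mathbb{S}^N)$, with Jacobian determinants uniformly bounded away from $0$ and $\infty$ and tending to $1$. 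Moreover, using $v=u\circ\varPhi$, the definition \eqref{ulambda}, and $|\varPhi(\lambda x)|=\lambda|x|$, a direct substitution yields
\[
\frac{v(\lambda x)}{\sqrt{\mathcal{H}(\lambda)}}=\frac{u(\varPhi(\lambda x))}{\sqrt{\mathcal{H}(\lambda)}}=w^{\lambda}(\varPhi_\lambda(x)),\qquad x\in\overline{B_1},
\]
so that the trace of $v(\lambda\,\cdot)/\sqrt{\mathcal{H}(\lambda)}$ on $\mathbb{S}^N$ equals $\bigl(w^{\lambda}|_{\mathbb{S}^N}\bigr)\circ T_\lambda$.

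The remaining step is to pass the strong $L^2(\mathbb{S}^N)$ convergence of the traces through the near-identity diffeomorphisms $T_{\lambda_{n_k}}$. For this I would invoke the elementary stability of composition: if $h_k\to h$ strongly in $L^2(\mathbb{S}^N)$ and $T_k\to\mathrm{Id}$ in $C^1(\mathbb{S}^N)$ with $\det\mathrm{Jac}\,T_k$ uniformly bounded and converging to $1$, then $h_k\circ T_k\to h$ strongly in $L^2(\mathbb{S}^N)$. This follows by writing $h_k\circ T_k-h=(h_k-h)\circ T_k+(h\circ T_k-h)$, estimating the first summand via a change of variables by $C\,\|h_k-h\|_{L^2(\mathbb{S}^N)}$, and handling the second by uniform continuity when $h$ is continuous together with a density argument for general $h\in L^2(\mathbb{S}^N)$. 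Applying this with $h_k=w^{\lambda_{n_k}}|_{\mathbb{S}^N}$ and $T_k=T_{\lambda_{n_k}}$ gives
\[
\frac{v(\lambda_{n_k}\,\cdot)}{\sqrt{\mathcal{H}(\lambda_{n_k})}}=\bigl(w^{\lambda_{n_k}}|_{\mathbb{S}^N}\bigr)\circ T_{\lambda_{n_k}}\longrightarrow\psi\qquad\text{strongly in }L^2(\mathbb{S}^N).
\]
Finally, by \eqref{Hlambda} one has $\int_{\mathbb{S}^N}v^2(\lambda_{n_k}\theta)\,dS=(1+O(\lambda_{n_k}))\,\mathcal{H}(\lambda_{n_k})$, so dividing the previous display by $\sqrt{1+O(\lambda_{n_k})}\to1$ yields the claimed convergence of $v(\lambda_{n_k}\,\cdot)/\sqrt{\int_{\mathbb{S}^N}v^2(\lambda_{n_k}\theta)\,dS}$ to $\psi$ in $L^2(\mathbb{S}^N)$.

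The only genuinely non-routine point is the composition-stability step: one needs the maps $T_\lambda$ to approach the identity strongly enough (uniformly in $C^1$, with controlled Jacobians) to carry strong $L^2$ convergence along, which is precisely what the $C^1$ expansions \eqref{Phisviluppo}--\eqref{detJac} of $\varPhi$ supply; everything else is bookkeeping with the change of variables $v=u\circ\varPhi$ and the comparison \eqref{Hlambda} between $\mathcal{H}(\lambda)$ and $\int_{\mathbb{S}^N}v^2(\lambda\theta)\,dS$.
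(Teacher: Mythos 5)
Your proposal is correct. The underlying idea is the same as the paper's — transport the blow-up limit of Lemma \ref{Lemmablow-up} through $\varPhi$ using the expansions \eqref{Phisviluppo}--\eqref{detJac} and fix the normalization via \eqref{Hlambda} — but the final transfer of convergence is implemented differently. The paper first extracts a strong $L^2(\mathbb{S}^N)$ limit $\tilde\psi$ of the normalized traces of $v(\lambda_{n_k}\cdot)$, using the $H^1(B_1)$ bound on $\hat v^\lambda/\sqrt{\mathcal H(\lambda)}$ from Lemma \ref{Hdellav} together with trace compactness and a further subsequence extraction, and then identifies $\tilde\psi=\psi$ by pairing with test functions $\varphi\in C^\infty_{\rm c}(\sfera^N)$: after the change of variables it is the \emph{test function} that gets composed with the near-identity map $\Phi^{-1}(\lambda_{n_k}\cdot)/\lambda_{n_k}$, and dominated convergence closes the argument. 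You instead observe the exact identity $v(\lambda x)/\sqrt{\mathcal H(\lambda)}=w^{\lambda}(\varPhi_\lambda(x))$ with $\varPhi_\lambda(x)=\lambda^{-1}\varPhi(\lambda x)$, note that \eqref{Phisulbordo} makes $T_\lambda=\varPhi_\lambda|_{\sfera^N}$ a sphere self-diffeomorphism converging to the identity in $C^1$, and push the strong $L^2(\sfera^N)$ convergence of $w^{\lambda_{n_k}}|_{\sfera^N}$ through the composition with $T_{\lambda_{n_k}}$ by the standard stability-of-composition argument (change of variables plus density of continuous functions). Your route is slightly more direct — it needs neither Lemma \ref{Hdellav} nor further subsequences, since strong convergence of the composition is proved outright rather than obtained by compactness and identification of the limit — at the cost of having to set up the restricted diffeomorphisms $T_\lambda$ and control their tangential Jacobians; the paper's duality argument avoids discussing $T_\lambda$ at all. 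Both are complete and the effort is comparable.
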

\begin{proof}
From Lemma \ref{Lemmablow-up}, there exist a subsequence
$\lambda_{n_k}$ and an eigenfunction 
$\psi$ of
 problem \eqref{eq:1} 
associated with the eigenvalue $\mu_{k_0}$
such that $\Vert \psi\Vert_{L^2(\mathbb{S}^N)}=1$
and \eqref{eq:29} holds. 
From  \eqref{eq:29} it follows that, up to passing to  a further subsequence,
$w^{\lambda_{n_k}}\big|_{\partial B_1}$ converges to
$\psi$ in $L^2(\mathbb{S}^N)$ and almost everywhere on
$\mathbb{S}^N$, where $w^{\lambda}$ is defined in \eqref{ulambda}. 
 From Lemma \ref{Hdellav} it follows that
$\{\hat{v}^\lambda/\sqrt{\mathcal{H}(\lambda)}\}_{\lambda}$ is bounded
in $H^1(B_1)$ and hence, in view of \eqref{Hlambda}, there exists $\tilde{\psi}\in L^2(\mathbb{S}^N)$ such that, up to a further subsequence, 
\begin{equation}\label{eq:31}
\frac{v(\lambda_{n_k}\cdot)}{\sqrt{\int_{\mathbb{S}^N}v^2(\lambda_{n_k}\theta)
    dS}}\rightarrow \tilde{\psi}\quad \text{strongly in
  $L^2(\mathbb{S}^N)$ and almost everywhere on $\sfera^N$}.
\end{equation} 
To conclude it is enough to show that $\tilde\psi=\psi$. To this aim we
observe that, for every $\varphi\in C^\infty_{\rm c}(\sfera^N)$, from
\eqref{v}, \eqref{Hlambda}, and a change of variable it follows that 
\begin{multline}\label{eq:30}
  \int_{\sfera^N}\frac{v(\lambda_{n_k}\theta)}{\sqrt{\int_{\mathbb{S}^N}v^2(\lambda_{n_k}\cdot)
  dS}}\,\varphi(\theta)\,dS\\=(1+O(\lambda_{n_k}))\int_{\sfera^N}w^{\lambda_{n_k}}(\theta)\varphi\left(\tfrac{\Phi^{-1}(\lambda_{n_k}\theta)}{\lambda_{n_k}}\right)
|\det\mathop{\rm Jac}\Phi^{-1}(\lambda_{n_k}\theta)|\,dS. 
\end{multline}
In view of \eqref{Phi-1sviluppo} and \eqref{detJac} we have that, for
all $\theta\in\sfera^N$,
\[
\lim_{k\to\infty} \varphi\left(\tfrac{\Phi^{-1}(\lambda_{n_k}\theta)}{\lambda_{n_k}}\right)
|\det\mathop{\rm Jac}\Phi^{-1}(\lambda_{n_k}\theta)|=\varphi(\theta),
\]
so that, by the Dominated Convergence Theorem, the right hand side of
\eqref{eq:30} converges to $\int_{\sfera^N}\psi(\theta)\varphi(\theta)\,dS$.
On the other hand \eqref{eq:31} implies that the left hand side of
\eqref{eq:30} converges to
$\int_{\sfera^N}\tilde\psi(\theta)\varphi(\theta)\,dS$.
Therefore, passing to the limit in \eqref{eq:30}, we obtain that 
\[
\int_{\sfera^N}\psi(\theta)\varphi(\theta)\,dS= \int_{\sfera^N}\tilde\psi(\theta)\varphi(\theta)\,dS
 \quad\text{for all }\varphi\in C^\infty_{\rm c}(\sfera^N)
\]
thus implying that $\psi=\tilde\psi$.
\end{proof}

\begin{lem}\label{lemmaliminf}
Let $k_0$ be as in Lemma \ref{Lemmablow-up}  and let
$M_{k_0}\in\mathbb{N}\setminus\{0\}$ be the multiplicity of
$\mu_{k_0}$ as an eigenvalue of \eqref{eq:1}. 
Let
$\{Y_{k_0 ,m}\}_{m=1,2,\dots ,M_{k_0}}$ be as in \eqref{eq:32}.
Then, for any sequence $\lambda_n\rightarrow 0^+$, there exists $m\in \{1,2,\dots ,M_{k_0} \}$ such that 
\[
\liminf_{n\rightarrow+\infty} \frac{\bigl|\int_{\mathbb{S}^N}v(\lambda_n\theta) Y_{k_0 ,m}(\theta)\,dS\bigr|}{\sqrt{\mathcal{H}(\lambda_n)}}>0.
\]
\end{lem}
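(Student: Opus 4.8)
The plan is to argue by contradiction using the angular Fourier decomposition of the straightened solution $v=u\circ\varPhi$ of \eqref{eq:CPv}. For $k\in\N\setminus\{0\}$, $1\le m\le M_k$ and $\lambda\in(0,\bar r)$, I would set
\[
\varphi_{k,m}(\lambda)=\int_{\mathbb{S}^N}v(\lambda\theta)\,Y_{k,m}(\theta)\,dS ;
\]
since $\{Y_{k,m}\}$ is an $L^2(\mathbb{S}^N)$-orthonormal basis, Parseval's identity together with \eqref{Hlambda} gives $\mathcal H(\lambda)=(1+O(\lambda))\sum_{k,m}\varphi_{k,m}(\lambda)^2$ as $\lambda\to0^+$. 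I would then suppose, for a contradiction, that
\[
\liminf_{n\to\infty}\frac{|\varphi_{k_0,m}(\lambda_n)|}{\sqrt{\mathcal H(\lambda_n)}}=0\qquad\text{for every }m\in\{1,\dots,M_{k_0}\},
\]
and aim to deduce that the whole family of $\mathbb{S}^N$-Fourier coefficients of $v$ then decays strictly faster than $\sqrt{\mathcal H}$, which contradicts the blow-up Lemma \ref{Lemmablowupdellav} (whose limit profile is a unit-norm eigenfunction lying in the $\mu_{k_0}$-eigenspace).

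The core of the argument is the radial ODE satisfied by these coefficients. Testing \eqref{eq:CPv} with functions of the form $Y_{k,m}\!\big(\tfrac{x}{|x|}\big)\eta(|x|)$, $\eta\in C^\infty_c(0,\bar r)$, and using $A=\mathrm{Id}_{N+1}+O(|x|)$ from \eqref{tildeA} together with the smallness of $\tilde f$ inherited from \eqref{xi} (resp. \eqref{eta}), I would obtain
\[
-\varphi_{k,m}''(r)-\tfrac{N}{r}\varphi_{k,m}'(r)+\tfrac{\mu_k}{r^2}\varphi_{k,m}(r)=\Upsilon_{k,m}(r)\qquad\text{on }(0,\bar r),
\]
where the remainder $\Upsilon_{k,m}$ collects the contributions of $\tilde f\,v$, of $(A-\mathrm{Id}_{N+1})\nabla v$ and of the coupling with the other angular modes. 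The characteristic exponents of the Euler operator on the left are $\sigma_k^\pm=-\tfrac{N-1}{2}\pm\sqrt{(\tfrac{N-1}{2})^2+\mu_k}$, i.e. $\sigma_k^+=k/2$ and $\sigma_k^-=-(N-1+k/2)$; since $|x|^{\sigma_k^-}Y_{k,m}(x/|x|)\notin H^1(B_1)$, the membership $v\in H^1(B_{\bar r})$ forces the $r^{\sigma_k^-}$-component to drop out, so a variation-of-constants integration of the ODE would yield real numbers $\beta_{k,m}$ with $\varphi_{k,m}(\lambda)=\lambda^{k/2}(\beta_{k,m}+o(1))$ as $\lambda\to0^+$ — this is the expansion \eqref{vespansion}. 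For this step the integrability hypotheses \eqref{xiL1} (resp. \eqref{etaL1}, \eqref{etanablafL1}), the bounds already proved for $\mathcal D,\mathcal H,\mathcal N$ (Lemmas \ref{LemNlimitata}, \ref{Lemlimesiste}) and the boundary gradient estimate (Lemma \ref{gradwlambdaRlambdabounded}) are what make $\Upsilon_{k,m}$ summable near $0$ against the weights entering that representation.

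Next I would insert these expansions into $\mathcal H(\lambda)=(1+O(\lambda))\sum_{k,m}\varphi_{k,m}(\lambda)^2$ and combine them with the two-sided control \eqref{H<kr}–\eqref{H>kr} on $\mathcal H$ and with $\mathcal N(r)\to\gamma=k_0/2$ (Lemma \ref{Lemlimesiste}, through $\mathcal H'/\mathcal H=2\mathcal N/r$). This pins down that the smallest index $k$ with $(\beta_{k,m})_{m}\neq 0$ — which exists by \eqref{H>kr} — is exactly $k_0$: a smaller one would make $\mathcal H$ decay slower than $\lambda^{2\gamma}$, and a larger one would push the Almgren limit strictly above $\gamma$. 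Hence $\mathcal H(\lambda)=\lambda^{2\gamma}\big(\sum_{m=1}^{M_{k_0}}\beta_{k_0,m}^2+o(1)\big)$ with $\sum_{m=1}^{M_{k_0}}\beta_{k_0,m}^2>0$, and therefore $\varphi_{k_0,m}(\lambda)/\sqrt{\mathcal H(\lambda)}\to\beta_{k_0,m}\big(\sum_j\beta_{k_0,j}^2\big)^{-1/2}$ for each $m$. The contradiction hypothesis would then force all $\beta_{k_0,m}$ to vanish, against $\sum_j\beta_{k_0,j}^2>0$; so some $m\in\{1,\dots,M_{k_0}\}$ must satisfy $\liminf_{n\to\infty}|\varphi_{k_0,m}(\lambda_n)|/\sqrt{\mathcal H(\lambda_n)}=|\beta_{k_0,m}|\big(\sum_j\beta_{k_0,j}^2\big)^{-1/2}>0$, which is the assertion.

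I expect the main obstacle to be precisely the derivation and integration of these ODEs: producing a remainder $\Upsilon_{k,m}$ that is small enough in the relevant weighted-$L^1$ sense near the origin — while keeping the coupling between distinct angular modes under control — so that the variation-of-constants integral converges and the limits $\beta_{k,m}$ genuinely exist. This is where the sharp integrability conditions \eqref{xiL1}, \eqref{etaL1}, \eqref{etanablafL1}, the perturbative identity \eqref{tildeA}, and the doubling/boundedness properties of $\mathcal H$, $\mathcal D$, $\mathcal N$ and $\int_{\partial B_1}|\nabla w^\lambda|^2$ all enter. Once the expansion $\varphi_{k,m}(\lambda)=\lambda^{k/2}(\beta_{k,m}+o(1))$ is in hand, both the present lemma and, subsequently, the explicit Cauchy-type formula \eqref{betaiespliciti} for the coefficients $\beta_i$ should reduce to bookkeeping.
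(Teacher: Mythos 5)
Your overall strategy is not the paper's, and it contains a genuine gap at its central step. The paper proves this lemma with a short compactness argument: assuming by contradiction that all the normalized projections have vanishing $\liminf$, it invokes Lemma \ref{Lemmablowupdellav} to extract a subsequence along which $v(\lambda_{n_k}\cdot)/\bigl(\int_{\mathbb{S}^N}v^2(\lambda_{n_k}\theta)\,dS\bigr)^{1/2}$ converges in $L^2(\mathbb{S}^N)$ to a unit-norm eigenfunction $\psi$ associated with $\mu_{k_0}$; since $\{Y_{k_0,m}\}_{m}$ is a basis of that eigenspace, $\psi$ cannot be orthogonal to all of them, and the contradiction follows from \eqref{Hlambda}. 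No ODE for the Fourier coefficients is needed at this stage.

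The gap in your route is the assertion that ``the smallest index $k$ with $(\beta_{k,m})_{m}\neq 0$ exists by \eqref{H>kr}'', from which you deduce $\sum_{m=1}^{M_{k_0}}\beta_{k_0,m}^2>0$. Estimate \eqref{H>kr} only says $\mathcal H(r)\geq K_2(\sigma)r^{2\gamma+\sigma}$ for every $\sigma>0$; this is perfectly compatible with $\lambda^{-2\gamma}\mathcal H(\lambda)\to 0$ (e.g. a logarithmic loss) and hence with \emph{all} coefficients $\beta_{k,m}$ vanishing. Excluding that degenerate scenario is exactly the content of Lemma \ref{LemmalimH>0}, whose proof in the paper \emph{uses} the present lemma (together with the representation formula \eqref{formulaperphi-b} and a carefully chosen maximizing sequence $R_n$). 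So your argument assumes, at the decisive moment, a statement that logically comes after — and depends on — the lemma you are trying to prove. Two further points would also need repair even if the circularity were resolved: the expansion $\varphi_{k,m}(\lambda)=\lambda^{k/2}(\beta_{k,m}+o(1))$ is only justified in the paper for $k=k_0$, because the weighted integrability of $\Upsilon_{k,m}$ in \eqref{eq:34} rests on $\sqrt{\mathcal H(s)}\leq K_1^{1/2}s^{k_0/2}$ and fails to give convergence of $\int_0^R s^{-k/2}\sqrt{\mathcal H(s)}\,ds$ for larger $k$; and inserting such expansions into the Parseval series $\sum_{k,m}\varphi_{k,m}^2$ requires a uniformity over the infinitely many modes that you do not address.
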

\begin{proof}
We argue by contradiction and assume that, along a sequence $\lambda_n\rightarrow 0^+$,
\begin{equation}\label{eq:35}
\liminf_{n\rightarrow+\infty} \frac{\bigl|\int_{\mathbb{S}^N}v(\lambda_n\theta) Y_{k_0 ,m}(\theta)\,dS\bigr|}{\sqrt{\mathcal{H}(\lambda_n)}}=0
\end{equation}
for all $m\in \{1,2,\dots ,M_{k_0}\}$. From Lemma
\ref{Lemmablowupdellav} and \eqref{Hlambda}
it follows that there exist a subsequence $\{\lambda_{n_k}\}$ and an
eigenfunction $\psi$ of problem \eqref{eq:1} associated to the
eigenvalue $\mu_{k_0}$ such that $\Vert
\psi\Vert_{L^2(\mathbb{S}^N)}=1$ and
\[
\frac{v(\lambda_{n_k}\theta)}{\sqrt{\mathcal{H}(\lambda_{n_k})}}\rightarrow \psi(\theta)
\quad \text{strongly in $L^2(\mathbb{S}^N)$}.
\] 
Furthermore, from \eqref{eq:35} we have that, for every   $m\in
\{1,2,\dots ,M_{k_0}\}$, there exists a further subsequence
$\{\lambda_{n^m_k}\}$ such that 
\[
\lim_{k\rightarrow +\infty} \int_{\mathbb{S}^N}
\frac{v(\lambda_{n^m_k}\theta)}{\sqrt{\mathcal{H}(\lambda_{n^m_k})}}
Y_{k_0 ,m}(\theta)\, dS=0. 
\]
Therefore $\int_{\mathbb{S}^N}\psi\, Y_{k_0 ,m}\,dS=0$ 
for all $m\in \{1,2,\dots,M_{k_0}\}$, thus implying that $\psi\equiv
0$ and giving rise to a contradiction.
\end{proof}

For all $k\in{\mathbb N}\setminus\{0\}$, $m\in\{1,2,\dots ,M_{k}\}$, and $\lambda\in (0,\bar{r})$, we define 
\begin{equation}\label{phi}
\varphi_{k,m}(\lambda):=\int_{\mathbb{S}^N} v(\lambda\theta)Y_{k,m}(\theta)\,dS
\end{equation}
and
\begin{equation}\label{Zi}
\begin{split}
\Upsilon_{k,m}(\lambda)=-&\int_{B_\lambda}(A-\text{Id}_{N+1})\nabla v(x)\cdot\frac{\nabla_{\mathbb{S}^N} Y_{k,m} (x/|x|)}{|x|}\,dx+\int_{B_\lambda}\tilde{f}(x)v(x) Y_{k,m}(x/|x|)\,dx\\
&+\int_{\partial B_\lambda}(A-\text{Id}_{N+1})\nabla v(x)\cdot \frac{x}{|x|} Y_{k,m}(x/|x|)\,dS,
\end{split}
\end{equation}
where  the functions $\{Y_{k ,m}\}_{m=1,2,\dots ,M_{k}}$ are introduced in \eqref{eq:32}.

\begin{lem}\label{l:varphi-ups}
Let $k_0$ be as in Lemma \ref{Lemmablow-up}. For all
$m\in\{1,2,\dots,M_{k_0}\}$ and $R\in(0,\bar r]$
  \begin{equation}\label{formulaperphi-b}
\begin{split}
\varphi_{k_0,m}(\lambda)=\lambda^{\frac {k_0}2}\biggl(R^{-\frac
  {k_0}2}&\varphi_{k_0,m}(R)+\frac{2N+k_0-2}{2(N+k_0-1)}\int_\lambda^{R} s^{-N-\frac
  {k_0}2}\Upsilon_{k_0,m}(s)ds\\&+\frac{k_0\, R^{-N+1-k_0}}{2(N+k_0 -1)}\int_0^{R} s^{\frac {k_0}2-1}\Upsilon_{k_0,m}(s)\,ds\biggr)
+o(\lambda^{\frac {k_0}2})
\end{split}
\end{equation}
as $\lambda\to 0^+$.
\end{lem}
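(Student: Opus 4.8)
The plan is to write down and then solve the second‑order ordinary differential equation satisfied, along rays through the origin, by $\varphi:=\varphi_{k_0,m}$ — the forcing being measured by $\Upsilon_{k_0,m}$ — and finally to discard the more singular homogeneous component of the solution by means of an a priori upper bound; here $R\in(0,\bar r]$ is fixed throughout.

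First I would check that $\varphi\in W^{1,1}_{\mathrm{loc}}((0,\bar r])$ with $\varphi'(\lambda)=\int_{\mathbb{S}^N}\nabla v(\lambda\theta)\cdot\theta\,Y_{k_0,m}(\theta)\,dS$, and derive the equation for $\varphi$. Testing the weak formulation of \eqref{eq:CPv} with $\phi(x)=\eta(|x|)\,Y_{k_0,m}(x/|x|)$ for $\eta\in C^\infty_c((0,\bar r))$ — admissible since $Y_{k_0,m}$ vanishes on $\Sigma$, so $\phi\in H^1_{\tilde\Gamma}(B_{\bar r})$ — splitting $A=\mathrm{Id}_{N+1}+(A-\mathrm{Id}_{N+1})$, passing to polar coordinates, and using that $Y_{k_0,m}$ is an eigenfunction of \eqref{eq:1}, i.e. $\int_{\mathbb{S}^N}\nabla_{\mathbb{S}^N}v(\lambda\cdot)\cdot\nabla_{\mathbb{S}^N}Y_{k_0,m}\,dS=\mu_{k_0}\varphi(\lambda)$, one sees — after an integration by parts in $\eta$ and the divergence theorem on $B_\lambda\setminus\tilde\Gamma$, where no contribution arises on the crack because $Y_{k_0,m}(x/|x|)=0$ on $\tilde\Gamma$ — that the terms carrying $A-\mathrm{Id}_{N+1}$ and $\tilde f$ reorganize exactly into the quantity $\Upsilon_{k_0,m}$ of \eqref{Zi}. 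This yields, in the distributional sense on $(0,\bar r)$,
\[
\big(\lambda^N\varphi'(\lambda)\big)'-\mu_{k_0}\lambda^{N-2}\varphi(\lambda)=-\Upsilon_{k_0,m}'(\lambda),
\]
equivalently $-\varphi''-\tfrac N\lambda\varphi'+\tfrac{\mu_{k_0}}{\lambda^2}\varphi=\lambda^{-N}\Upsilon_{k_0,m}'$. In parallel, from \eqref{tildeA}, the integrability assumptions on $\tilde f$ (inherited from those on $f$ via \eqref{eq:def-f}), the Hardy inequality \eqref{Hardy}, the energy bound \eqref{gradvlambda}, the decay \eqref{H<kr} of $\mathcal H$, the identity $\gamma=k_0/2$ of Lemma \ref{Lemmablow-up}, and the boundary‑gradient estimates of Section \ref{sec:blow-up-argument} (cf.\ Lemma \ref{gradwlambdaRlambdabounded}) for the surface term in \eqref{Zi}, one obtains $\Upsilon_{k_0,m}(s)=o\big(s^{N-1+k_0/2}\big)$ as $s\to0^+$; in particular $\Upsilon_{k_0,m}$ is bounded near $0$ and $s\mapsto s^{k_0/2-1}\Upsilon_{k_0,m}(s)\in L^1(0,R)$.

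Next I would integrate the ODE. Its homogeneous equation has fundamental system $\{\lambda^{k_0/2},\lambda^{-(N-1+k_0/2)}\}$ — whose exponents are the numbers $\sigma_{k_0}^\pm$ of the proof of Lemma \ref{Lemmablow-up} — with $\lambda^N\big(\lambda^{k_0/2}\tfrac{d}{d\lambda}\lambda^{-(N-1+k_0/2)}-\lambda^{-(N-1+k_0/2)}\tfrac{d}{d\lambda}\lambda^{k_0/2}\big)\equiv-(N+k_0-1)$. Variation of parameters gives, for suitable $c_1,c_2\in\R$,
\[
\varphi(\lambda)=c_1\lambda^{\frac{k_0}2}+c_2\lambda^{-(N-1+\frac{k_0}2)}-\frac{\lambda^{\frac{k_0}2}}{N+k_0-1}\int_R^\lambda s^{-(N-1+\frac{k_0}2)}\Upsilon_{k_0,m}'(s)\,ds+\frac{\lambda^{-(N-1+\frac{k_0}2)}}{N+k_0-1}\int_0^\lambda s^{\frac{k_0}2}\Upsilon_{k_0,m}'(s)\,ds.
\]
Integrating by parts in both integrals — the boundary term at $s=0$ vanishing because $\Upsilon_{k_0,m}(s)\to0$ — the two occurrences of $\Upsilon_{k_0,m}(\lambda)$ cancel; writing $\int_0^\lambda=\int_0^R-\int_\lambda^R$ and absorbing the $\lambda$‑independent integrals into the constants, one arrives at
\[
\varphi(\lambda)=\widetilde c_1\lambda^{\frac{k_0}2}+c_2\lambda^{-(N-1+\frac{k_0}2)}+\frac{2N+k_0-2}{2(N+k_0-1)}\,\lambda^{\frac{k_0}2}\int_\lambda^R s^{-N-\frac{k_0}2}\Upsilon_{k_0,m}(s)\,ds-\frac{k_0}{2(N+k_0-1)}\,\lambda^{-(N-1+\frac{k_0}2)}\int_0^\lambda s^{\frac{k_0}2-1}\Upsilon_{k_0,m}(s)\,ds,
\]
with $\widetilde c_1\in\R$ a new constant.

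Finally I would fix the constants. Since $\|Y_{k_0,m}\|_{L^2(\mathbb{S}^N)}=1$, the Cauchy--Schwarz inequality together with \eqref{Hlambda}, \eqref{H<kr} and $\gamma=k_0/2$ gives the a priori bound $|\varphi(\lambda)|\le\|v(\lambda\,\cdot)\|_{L^2(\mathbb{S}^N)}\le C\lambda^{k_0/2}$. Every term on the right‑hand side above other than $c_2\lambda^{-(N-1+k_0/2)}$ is $o\big(\lambda^{-(N-1+k_0/2)}\big)$: for the $\int_\lambda^R$ term one uses the boundedness of $\Upsilon_{k_0,m}$, and for the $\int_0^\lambda$ term the sharper bound $\Upsilon_{k_0,m}(s)=o(s^{N-1+k_0/2})$, which actually yields $\lambda^{-(N-1+k_0/2)}\int_0^\lambda s^{k_0/2-1}\Upsilon_{k_0,m}(s)\,ds=o(\lambda^{k_0/2})$. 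Comparing with $|\varphi(\lambda)|\le C\lambda^{k_0/2}$ forces $c_2=0$; the last displayed term is then absorbed into a remainder $o(\lambda^{k_0/2})$, and evaluating the resulting exact identity at $\lambda=R$ (where the $\int_\lambda^R$ term vanishes) gives
\[
\widetilde c_1=R^{-k_0/2}\varphi(R)+\frac{k_0\,R^{-N+1-k_0}}{2(N+k_0-1)}\int_0^R s^{k_0/2-1}\Upsilon_{k_0,m}(s)\,ds,
\]
which is exactly \eqref{formulaperphi-b}. The main obstacle I foresee is the rigorous justification of the ODE and, above all, of the decay estimate $\Upsilon_{k_0,m}(s)=o(s^{N-1+k_0/2})$ — in particular controlling the surface integral in \eqref{Zi}, which is not pointwise‑in‑$s$ dominated by $\int_{B_s}|\nabla v|^2$ and therefore requires the doubling and boundary‑gradient bounds of Section \ref{sec:blow-up-argument}, together with some care with the limited regularity of $v$ near $0$ and near $\tilde\Gamma$ when carrying out the integrations by parts and making sense of $\Upsilon_{k_0,m}'$.
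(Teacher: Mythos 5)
Your overall architecture coincides with the paper's: derive the inhomogeneous Euler-type ODE for $\varphi_{k_0,m}$ with forcing $\lambda^{-N}\Upsilon_{k_0,m}'$, solve it by variation of parameters with fundamental system $\{\lambda^{k_0/2},\lambda^{-(N-1+k_0/2)}\}$, kill the singular homogeneous component by an a priori bound, and read off the constant at $\lambda=R$. The computations (Wronskian, cancellation of the $\Upsilon_{k_0,m}(\lambda)$ terms, final coefficients) are correct, and your way of forcing $c_2=0$ via $|\varphi_{k_0,m}(\lambda)|\le\|v(\lambda\cdot)\|_{L^2(\mathbb{S}^N)}\le C\lambda^{k_0/2}$ (from \eqref{Hlambda} and \eqref{H<kr}) is a legitimate, slightly cleaner variant of the paper's contradiction argument, which instead uses only $\int_0^{\bar r}\lambda^{N-2}|\varphi_{k_0,m}|^2\,d\lambda<\infty$.

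The genuine gap is the pointwise decay estimate $\Upsilon_{k_0,m}(s)=o\bigl(s^{N-1+k_0/2}\bigr)$, on which several of your steps lean (the boundedness of $\Upsilon_{k_0,m}$ near $0$, the vanishing of the boundary term at $s=0$ in the integration by parts, and the final absorption of the $\int_0^\lambda$ term). The first two summands of \eqref{Zi} do satisfy such a pointwise bound (via \eqref{tildeA}, \eqref{gradvlambda}, \eqref{H<kr} and the smallness of $\xi_f$ or $\eta(\cdot,f)$), but the surface term $\int_{\partial B_s}(A-\mathrm{Id}_{N+1})\nabla v\cdot\frac{x}{|x|}Y_{k_0,m}\,dS$ does not: $s\mapsto\int_{\partial B_s}|\nabla v|\,dS$ is merely an $L^1_{\mathrm{loc}}$ function of $s$ produced by the coarea formula, and Lemma \ref{gradwlambdaRlambdabounded} controls the scaled boundary gradient only along the special radii $\lambda R_\lambda$, not uniformly in $s$. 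What is actually provable — and what the paper proves and uses — is the integrated statement $s^{-N-k_0/2}\Upsilon_{k_0,m}\in L^1(0,\bar r)$, obtained by integrating the surface term by parts in $s$ so as to convert it into volume integrals of $|\nabla v|$ (estimate \eqref{F3'}). This weaker input still suffices for your scheme: the $\int_\lambda^R$ term is then $O(\lambda^{k_0/2})$, the $\int_0^\lambda$ term is $o(\lambda^{k_0/2})$ by the elementary bound $\lambda^{-(N-1+k_0/2)}\int_0^\lambda s^{k_0/2-1}|\Upsilon_{k_0,m}|\,ds\le\lambda^{k_0/2}\int_0^\lambda s^{-N-k_0/2}|\Upsilon_{k_0,m}|\,ds$, and the boundary term at $s=0$ is avoided altogether by writing the particular solution with both primitives anchored at $R$ and fixing the resulting free constant a posteriori, as the paper does. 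As written, however, your proof asserts an estimate that fails, so this step must be replaced by the integrated version.
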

\begin{proof}
For all $k\in{\mathbb N}\setminus\{0\}$ and $m\in\{1,2,\dots ,M_{k}\}$, we consider the distribution $\zeta_{k,m}$ on $(0,\bar{r})$ defined as 
\begin{equation*}\label{zeta}
\begin{split}
  _{\mathcal{D}'(0,\bar{r})}\langle\zeta_{k,m},\omega\rangle_{\mathcal{D}(0,\bar{r})}&=\int_0^{\bar{r}} \omega(\lambda)\biggl(\int_{\mathbb{S}^N}\tilde{f}(\lambda\theta)v(\lambda\theta) Y_{k,m}(\theta)dS\biggr)d\lambda\\
  &+_{H^{-1}(B_{\bar{r}})}\bigl\langle
  \text{div}((A-\text{Id}_{N+1})\nabla v), |x|^{-N}\omega(|x|)
  Y_{k,m}(x/|x|)\bigr\rangle_{H^1_0(B_{\bar{r}})}
\end{split}
\end{equation*}
for all $\omega\in\mathcal{D}(0,\bar{r})$, where 
\begin{equation*}
_{H^{-1}(B_{\bar{r}})}\bigl\langle \text{div}((A-\text{Id}_{N+1})\nabla v), \phi\bigr\rangle_{H^1_0(B_{\bar{r}})}=-\int_{B_{\bar{r}}}(A-\text{Id}_{N+1})\nabla v\cdot\nabla \phi\,dx
\end{equation*}
for all $\phi\in H^1_0(B_{\bar{r}})$. Letting 
$\Upsilon_{k,m}$ be defined in \eqref{Zi},
we observe that $\Upsilon_{k,m}\in L^1_{\text{loc}}(0,\bar{r})$ and, by direct calculations,
\begin{equation}\label{Upsilon'=zeta}
\Upsilon'_{k,m}(\lambda)=\lambda^N\zeta_{k,m}(\lambda)\quad \text{in $\mathcal{D}'(0,\bar{r})$}.
\end{equation}
From the definition of $\zeta_{k,m}$, \eqref{eq:CPv}, and the fact
that 
    $Y_{k ,m}$ is an eigenfunction of \eqref{eq:1} associated to the
    eigenvalue $\mu_{k}$, it follows that, for 
all $k\in{\mathbb N}\setminus\{0\}$ and $m\in\{1,2,\dots ,M_{k}\}$, the function $\varphi_{k,m}$ defined in \eqref{phi} solves 
\begin{equation*}
-\varphi''_{k,m}(\lambda)-\frac{N}{\lambda}\varphi'_{k,m}(\lambda)+\frac{\mu_k}{\lambda^2}\varphi_{k,m}(\lambda)=\zeta_{k,m}(\lambda)
\end{equation*}
in the sense of distributions in $(0,\bar{r})$, which, in view of \eqref{eq:2}, can be also written as 
\begin{equation*}
-(\lambda^{N+k}(\lambda^{-\frac k2}\varphi_{k,m}(\lambda))')'=\lambda^{N+\frac k2}\zeta_{k,m}(\lambda)
\end{equation*}
in the sense of distributions in $(0,\bar{r})$.
Integrating the
right-hand side of the above equation by parts and taking into account
\eqref{Upsilon'=zeta}, we obtain that, for every
$k\in{\mathbb N}\setminus\{0\}$, $m\in\{1,2,\dots ,M_{k}\}$, and $R\in (0,\bar{r}]$, there
exists $c_{k,m}(R)\in\mathbb{R}$ such that
\begin{equation*}
(\lambda^{-\frac k2}\varphi_{k,m}(\lambda))'=-\lambda^{-N-\frac
  k2}\Upsilon_{k,m}(\lambda)-\frac
k2\lambda^{-N-k}\biggl(c_{k,m}(R)+\int_\lambda^{R} s^{\frac k2-1}\Upsilon_{k,m}(s)\,ds\biggr)
\end{equation*}
in the sense of distributions in $(0,\bar{r})$. In particular,
$\varphi_{k,m}\in W^{1,1}_{\text{loc}}(0,\bar{r})$ and, by a further integration,
\begin{equation}\label{formulaperphi}
\begin{split}
\varphi_{k,m}(\lambda)=\lambda^{\frac k2}&\biggl(R^{-\frac
  k2}\varphi_{k,m}(R)+\int_\lambda^{R}
 s^{-N-\frac k2}\Upsilon_{k,m}(s)ds\biggr)\\
&+\frac k2\lambda^{\frac k2}\int_\lambda^{R}
s^{-N-k}\biggl(c_{k,m}(R)+\int_s^{R} t^{\frac k2-1}\Upsilon_{k,m}(t)dt\biggr)ds\\
=\lambda^{\frac k2}\biggl(R^{-\frac
  k2}&\varphi_{k,m}(R)+\frac{2N+k-2}{2(N+k-1)}\int_\lambda^{R} s^{-N-\frac
  k2}\Upsilon_{k,m}(s)ds-\frac{k\, c_{k,m}(R) R^{-N+1-k}}{2(N+k -1)}\biggr)\\
&+\frac{k\,\lambda^{-N+1-\frac
    k2}}{2(N-1+k)}\biggl(c_{k,m}(R)+\int_\lambda^{R} t^{\frac k2-1}\Upsilon_{k,m}(t)dt\biggr).
\end{split}
\end{equation}
Let now $k_0$ be as in Lemma \ref{Lemmablow-up}.
We claim that 
\begin{equation}\label{eq:33}
\text{the function 
$s\mapsto s^{-N-\frac{k_0}2}\Upsilon_{k_0,m}(s)$ 
belongs to $L^1(0,\bar{r})$ for any $m\in\{1,2,\dots,M_{k_0}\}$.}
\end{equation}
To this purpose, let us estimate each term in \eqref{Zi}. By
\eqref{tildeA}, \eqref{gradvlambda}, Lemma \ref{wlambdlalimitata}, the
H\"{o}lder inequality and a change of variable we obtain that, for all $s\in(0,\bar r)$,
\begin{equation}\label{F1}
\begin{split}
  \biggl|\int_{B_s}(A(x)-\text{Id}_{N+1}&) \nabla v(x)\cdot
  \frac{\nabla_{\mathbb{S}^N}Y_{k_0,m}\big(\frac{x}{|x|}\big)}{|x|}dx\biggr|\leq \mathrm{const}\,\int_{B_s}|x||\nabla v(x)|\frac{|\nabla_{\mathbb{S}^N}Y_{k_0,m}\big (\frac{x}{|x|}\big)|}{|x|}dx\\
  &\leq \mathrm{const}\,\sqrt{\int_{B_s}|\nabla v(x)|^2 dx}\,\sqrt{\int_{B_s}|\nabla_{\mathbb{S}^N}Y_{k_0,m}\big (\tfrac{x}{|x|}\big)|^2dx}\\
  &\leq \mathrm{const}\,s^{\frac{N-1}{2}}\, s^{\frac{N+1}{2}}\sqrt{\mathcal{H}(s)}\sqrt{\int_{B_1}\frac{|\nabla \hat{v}^s(x)|^2}{\mathcal{H}(s)}dx}\leq\mathrm{const}\,s^N \sqrt{\mathcal{H}(s)}.
\end{split}
\end{equation}
By the H\"{o}lder inequality, \eqref{v}, \eqref{PhiC1}, and the definition of
$\tilde{f}$ in \eqref{eq:def-f} we have that,  
\begin{align*}
\biggl|\int_{B_s}
  \tilde{f}(x)v(x)Y_{k_0,m}\big(\tfrac{x}{|x|}\big)\,dx\biggr|&\leq
                                                                \sqrt{\int_{B_s}|\tilde{f}(x)|v^2(x)\,dx}
\sqrt{\int_{B_s}|\tilde{f}(x)|Y_{k_0,m}^2\big(\tfrac{x}{|x|}\big)\,dx}\\
&=\sqrt{\int_{B_s}|f(y)|u^2(y)\,dy}\sqrt{\int_{B_s}|f(y)|Y_{k_0,m}^2\big(\tfrac{\Phi^{-1}(y)}{|\Phi^{-1}(y)|}\big)\,dy}.
\end{align*}
From \eqref{eta}, \eqref{nablaeta}, \eqref{<1/2}, \eqref{Nlimitata},
and \eqref{primoterm} it follows that 
\begin{equation*}
\int_{B_s} |f|u^2\,dx\leq{\rm const}\, \beta(s,f)s^{N-1}\mathcal
H(s)
\end{equation*}
where $\beta(s,f)=\eta(s,f)$ under assumptions \eqref{eta0}-\eqref{eta} and $\beta(s,f)=\xi_f(s)$ under assumptions \eqref{xi0}-\eqref{xi}.
Moreover, by \eqref{eta}, \eqref{fu^2} and direct calculations we also have that 
\begin{equation*}
\int_{B_s}|f(y)|Y_{k_0,m}^2\big(\tfrac{\Phi^{-1}(y)}{|\Phi^{-1}(y)|}\big)\,dy
\leq {\rm const}\, \beta(s,f)s^{N-1}.
\end{equation*}
Therefore we conclude that, for all $s\in(0,\bar r)$, 
\begin{equation}\label{F2}
\biggl|\int_{B_s}
\tilde{f}(x)v(x)Y_{k_0,m}\big(\tfrac{x}{|x|}\big)\,dx\biggr|\leq
\mathrm{const}\,\beta(s,f)s^{N-1}\sqrt{\mathcal{H}(s)}.
\end{equation}
As regards the last term in \eqref{Zi}, we observe that, for a.e. $s\in(0,\bar r)$,  
\begin{equation}\label{F3}
\biggl|\int_{\partial B_s} (A-\text{Id}_{N+1})\nabla
v(x)\cdot\frac{x}{|x|}
Y_{k_0,m}\big(\tfrac{x}{|x|}\big)\,dS \biggr|\leq \mathrm{const}\,
s\int_{\partial B_s} |\nabla v|
\big|Y_{k_0,m}\big(\tfrac{x}{|x|}\big)\big|dS,
\end{equation}
as a consequence of \eqref{tildeA}. Integrating by parts and using
\eqref{gradvlambda}, Lemma \ref{wlambdlalimitata}, the H\"{o}lder
inequality and a change of variable we have that, for every $R\in(0,\bar r]$,
\begin{equation}\label{F3'}
\begin{split}
\int_0^{R} s^{-N-\frac{k_0}2+1}\biggl(&\int_{\partial B_s}
|\nabla v||
Y_{k_0,m}\big(\tfrac{x}{|x|}\big)
| dS\biggr)ds
=R^{-N-\frac{k_0}2+1}\int_{B_{R}} |\nabla
v|\big|Y_{k_0,m}\big(\tfrac{x}{|x|}\big)\big|dx\\
& +\big(N+\tfrac{k_0}2-1\big)\int_0^{R}
s^{-N-\frac{k_0}2}\biggl(\int_{B_s} |\nabla v|
\big|Y_{k_0,m}\big(\tfrac{x}{|x|}\big)\big|
\,dx\biggr)ds\\
&\qquad\leq\mathrm{const}\,\bigg(
R^{-\frac{k_0}2+1}\sqrt{\mathcal{H}(R)}
+\int_0^{R} s^{-\frac{k_0}2}\sqrt{\mathcal{H}(s)}ds\bigg).
\end{split}
\end{equation}
 From \eqref{Zi},  \eqref{F1}, \eqref{F2}, \eqref{F3}, and \eqref{F3'}
we deduce that, for all  $m\in\{1,2,\dots,M_{k_0}\}$ and $R\in(0,\bar r]$, 
\begin{equation}\label{eq:34}
  \int_0^R s^{-N-\frac{k_0}2}|\Upsilon_{k_0,m}(s)|\,ds\leq
  \mathrm{const}\,R^{-\frac{k_0}2+1}\sqrt{\mathcal{H}(R)}+\int_0^{R} s^{-\frac{k_0}2}\sqrt{\mathcal{H}(s)}\left(1+s^{-1}\beta(s,f)\right)ds.
\end{equation}
Thus claim 
\eqref{eq:33}
follows from \eqref{eq:34}, \eqref{H<kr} and assumptions \eqref{xiL1} and \eqref{etaL1}. 

 From \eqref{eq:33} we deduce that, for every fixed $R\in(0,\bar r]$,
\begin{equation}\label{opiccololambda-N+1-sigma}
\begin{split}
\lambda^{\frac {k_0}2}\biggl(R^{-\frac
  {k_0}2}\varphi_{{k_0},m}(R)&+\frac{2N+{k_0}-2}{2(N+{k_0}-1)}\int_\lambda^{R} s^{-N-\frac
  {k_0}2}\Upsilon_{{k_0},m}(s)ds-\frac{{k_0}\, c_{{k_0},m}(R) R^{-N+1-{k_0}}}{2(N+{k_0} -1)}\biggr)\\
&=O(\lambda^{\frac{k_0}2})=o(\lambda^{-N+1-\frac{k_0}2})\quad \text{as $\lambda\rightarrow 0^+$}.
\end{split}
\end{equation}
On the other hand, \eqref{eq:33} also implies that
$t \mapsto t^{\frac{k_0}2 -1}\Upsilon_{k_0,m}(t)\in
L^1(0,\bar{r})$. We claim that,   for every $R\in(0,\bar r]$,
\begin{equation}\label{ci+int=0}
c_{k_0,m}(R)+\int_0^{R} t^{\frac {k_0}2-1}\Upsilon_{k_0,m}(t)dt=0.
\end{equation}
Suppose by contradiction that \eqref{ci+int=0} is not true for some $R\in(0,\bar r]$. Then, from \eqref{formulaperphi} and \eqref{opiccololambda-N+1-sigma} we infer that
\begin{equation}\label{2}
\varphi_{k_0,m}(\lambda)\sim \frac{k_0\,\lambda^{-N+1-\frac
    {k_0}2}}{2(N-1+k_0)}\biggl(c_{k_0,m}(R)+\int_0^{R} t^{\frac {k_0}2-1}\Upsilon_{k_0,m}(t)dt\biggr) \quad \text{as $\lambda\rightarrow 0^+$}. 
\end{equation}
Lemma \ref{lemmaHardy} and the fact that $v\in H^1(B_{\bar{r}})$ imply that
\begin{equation*}
\int_0^{\bar{r}} \lambda^{N-2}|\varphi_{k_0,m}(\lambda)|^2 \,d\lambda\leq \int_0^{\bar{r}} \lambda^{N-2}\biggl(\int_{\mathbb{S}^N}|v(\lambda\theta)|^2 dS\biggr)d\lambda=\int_{B_{\tilde{r}}}\frac{|v(x)|^2}{|x|^2}dx<+\infty,
\end{equation*}
thus contradicting \eqref{2}. Claim \eqref{ci+int=0} is thereby
proved.

 From \eqref{eq:33} and \eqref{ci+int=0} it follows that,   for every $R\in(0,\bar r]$, 
\begin{equation}\label{lambda^sigmai-}
\begin{split}\bigg|&\lambda^{-N+1-\frac
    {k_0}2}\biggl(c_{k_0,m}(R)+\int_\lambda^{R} t^{\frac
    {k_0}2-1}\Upsilon_{k_0,m}(t)dt\biggr)\bigg|=
\lambda^{-N+1-\frac
    {k_0}2}\bigg|\int_0^\lambda t^{\frac
    {k_0}2-1}\Upsilon_{k_0,m}(t)dt
\bigg|\\
&\leq
\lambda^{-N+1-\frac
    {k_0}2}\int_0^\lambda t^{N+k_0-1} \Big|t^{
    -N-\frac{k_0}2}\Upsilon_{k_0,m}(t)\Big|dt
\leq
\lambda^{\frac
    {k_0}2}\int_0^\lambda \Big|t^{
    -N-\frac{k_0}2}\Upsilon_{k_0,m}(t)\Big|dt=o(\lambda^{\frac
    {k_0}2})
\end{split}
\end{equation}
as $\lambda\rightarrow 0^+$. 

The conclusion follows by combining \eqref{formulaperphi},
\eqref{lambda^sigmai-}, and \eqref{ci+int=0}.
\end{proof}

 \begin{lem} \label{LemmalimH>0}
Let $\gamma$ be as in Lemma \ref{Lemlimesiste}. Then
$\lim_{r\rightarrow 0^+} r^{-2\gamma}\mathcal{H}(r)>0$.
\end{lem}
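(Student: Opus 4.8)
The plan is to argue by contradiction. By Lemma \ref{LemmaesistelimH} the limit $L:=\lim_{r\to 0^+}r^{-2\gamma}\mathcal H(r)$ exists and is finite, and $L\ge 0$ since $\mathcal H>0$ (Lemma \ref{H>0}); suppose $L=0$, i.e. $\mathcal H(r)=o(r^{2\gamma})$ as $r\to 0^+$. Recall $\gamma=k_0/2$ by Lemma \ref{Lemmablow-up}$(i)$. Set $v=u\circ\varPhi$ as in \eqref{v} and, for $m\in\{1,\dots,M_{k_0}\}$, use the Fourier components $\varphi_{k_0,m}$ and the remainders $\Upsilon_{k_0,m}$ of \eqref{phi}--\eqref{Zi}. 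Write $\rho(r):=r^{-\gamma}\sqrt{\mathcal H(r)}$, so that $\rho(r)\to 0$ by assumption. Since $\{Y_{k_0,m}\}_m$ is $L^2(\sfera^N)$-orthonormal and $\mathcal H(\lambda)=(1+O(\lambda))\int_{\sfera^N}v^2(\lambda\theta)\,dS$ by \eqref{Hlambda}, the Cauchy--Schwarz inequality gives $|\varphi_{k_0,m}(\lambda)|\le(1+o(1))\,\lambda^{\gamma}\rho(\lambda)$ for all small $\lambda$.

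First I would show that the leading coefficient in the asymptotics of $\varphi_{k_0,m}$ vanishes. By Lemma \ref{l:varphi-ups}, $\lambda^{-k_0/2}\varphi_{k_0,m}(\lambda)$ converges, as $\lambda\to 0^+$, to the $R$-independent constant $C_m$ obtained by letting $\lambda\to0$ in \eqref{formulaperphi-b} (the relevant integrals converging by \eqref{eq:33}). From $|\lambda^{-\gamma}\varphi_{k_0,m}(\lambda)|\le(1+o(1))\rho(\lambda)\to0$ I deduce $C_m=0$ for every $m$. Inserting $C_m=0$ into the identity \eqref{formulaperphi} together with \eqref{ci+int=0} that underlies \eqref{formulaperphi-b}, and absorbing the term with a negative power of $\lambda$ by means of $t^{k_0/2-1}\le t^{N+k_0-1}\,t^{-N-k_0/2}$ on $(0,\lambda)$ (as in \eqref{lambda^sigmai-}), I obtain the clean bound $|\varphi_{k_0,m}(\lambda)|\le \mathrm{const}\,\lambda^{\gamma}\int_0^\lambda s^{-N-k_0/2}|\Upsilon_{k_0,m}(s)|\,ds$ for $\lambda\in(0,\bar r)$.

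The crucial point is then to upgrade the estimate of this integral from the naive $o(\lambda^{\gamma})$ to $o(\sqrt{\mathcal H(\lambda)})=o(\lambda^{\gamma}\rho(\lambda))$. For this I would first establish an ``almost monotonicity'' bound for $\rho$: from $\mathcal H'(r)/\mathcal H(r)=2\mathcal N(r)/r$ (\eqref{H'2}) and $\mathcal N(r)\ge\gamma-C_3 rF(r)$ with $F\in L^1(0,r_0)$ (\eqref{N-gamma} and \eqref{eq:18}), integrating $\mathcal H'/\mathcal H\ge 2\gamma/r-2C_3F$ over $(s,\lambda)$ gives $\rho(s)\le C_5\,\rho(\lambda)$ for all $0<s\le\lambda<r_0$, with $C_5=\exp\!\big(C_3\int_0^{r_0}F\big)$. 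Feeding this into estimate \eqref{eq:34} of the proof of Lemma \ref{l:varphi-ups} taken with $R=\lambda$, and using $s^{-k_0/2}\sqrt{\mathcal H(s)}=\rho(s)\le C_5\rho(\lambda)$ together with $\lambda^{1-k_0/2}\sqrt{\mathcal H(\lambda)}=\lambda\rho(\lambda)$, yields $|\varphi_{k_0,m}(\lambda)|\le \mathrm{const}\,\lambda^{\gamma}\rho(\lambda)\big(\lambda+\int_0^\lambda s^{-1}\beta(s,f)\,ds\big)$, where $\beta(s,f)=\xi_f(s)$ under \eqref{xi0}--\eqref{xi} and $\beta(s,f)=\eta(s,f)$ under \eqref{eta0}--\eqref{eta}. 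Since $s^{-1}\beta(s,f)\in L^1(0,r_0)$ by \eqref{xiL1} and \eqref{etaL1}, the bracket tends to $0$, so $\varphi_{k_0,m}(\lambda)/\sqrt{\mathcal H(\lambda)}\to0$ as $\lambda\to0^+$ for every $m\in\{1,\dots,M_{k_0}\}$, contradicting Lemma \ref{lemmaliminf} (applied e.g. to $\lambda_n=1/n$).

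The main obstacle is precisely this last refinement: a naive use of Lemma \ref{l:varphi-ups} only gives $\varphi_{k_0,m}(\lambda)=o(\lambda^{\gamma})$, which is perfectly compatible with $\mathcal H(\lambda)=o(\lambda^{2\gamma})$ and produces no contradiction. One genuinely needs the reverse--doubling bound $\rho(s)\lesssim\rho(\lambda)$ for $s\le\lambda$, which is available here only thanks to the $L^1$ control \eqref{eq:18} of $F$ coming from the integrability assumptions \eqref{xiL1}, \eqref{etaL1}, \eqref{etanablafL1}, and to its careful interplay with the bound \eqref{eq:34} for $\Upsilon_{k_0,m}$.
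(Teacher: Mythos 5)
Your proposal is correct, and its overall skeleton coincides with the paper's: argue by contradiction using Lemma \ref{LemmaesistelimH}, deduce from $\rho(\lambda)\to0$ and Lemma \ref{l:varphi-ups} that the leading constant in the expansion of $\varphi_{k_0,m}$ vanishes (the paper's \eqref{limphii}--\eqref{c1=0}), reduce $|\varphi_{k_0,m}(\lambda)|$ to $\mathrm{const}\,\lambda^{k_0/2}\int_0^\lambda s^{-N-k_0/2}|\Upsilon_{k_0,m}(s)|\,ds$ via \eqref{formulaperphi}, \eqref{ci+int=0} and \eqref{lambda^sigmai-}, control that integral through \eqref{eq:34}, and contradict Lemma \ref{lemmaliminf}. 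The one place where you genuinely diverge is the step you correctly single out as the main obstacle: how to convert $\int_0^\lambda s^{-k_0/2}\sqrt{\mathcal H(s)}\,(1+s^{-1}\beta(s,f))\,ds$ into $o(\lambda^{-k_0/2}\sqrt{\mathcal H(\lambda)})\cdot\lambda^{k_0/2}$. The paper does this only along a special sequence $R_n\to0$ chosen so that $R_n^{-k_0/2}\sqrt{\mathcal H(R_n)}=\max_{s\in[0,R_n]}s^{-k_0/2}\sqrt{\mathcal H(s)}$ (a choice whose existence itself uses the contradiction hypothesis $\rho\to0$), and then invokes Lemma \ref{lemmaliminf} along that sequence, as in \eqref{o1}. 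You instead prove the unconditional reverse-doubling inequality $\rho(s)\le C_5\,\rho(\lambda)$ for all $0<s\le\lambda<r_0$ by integrating $\mathcal H'/\mathcal H\ge 2\gamma/r-2C_3F(r)$ over $(s,\lambda)$, which is exactly the computation behind \eqref{H<kr} with general endpoints and is legitimate since $F\in L^1(0,r_0)$ by \eqref{eq:18}. This buys you a cleaner conclusion — $\varphi_{k_0,m}(\lambda)/\sqrt{\mathcal H(\lambda)}\to0$ as $\lambda\to0^+$ for every $m$, so Lemma \ref{lemmaliminf} is contradicted along an arbitrary sequence — at the cost of one extra (but elementary and already implicitly available) monotonicity estimate; the paper's maximizing sequence avoids stating that estimate but makes the argument sequence-dependent. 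Both routes rest on the same bound \eqref{eq:34}, and your version is, if anything, slightly more transparent.
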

\begin{proof}
For any $\lambda\in (0,\bar{r})$, we expand $\theta\mapsto
v(\lambda\theta)\in L^2(\mathbb{S}^N)$ in Fourier series with respect
to the orthonormal basis
    $\{Y_{k ,m}\}_{m=1,2,\dots ,M_{k}}$ introduced in \eqref{eq:32}, i.e. 
\begin{equation}\label{vespansion}
v(\lambda\theta)=\sum_{k=1}^\infty\sum_{m=1}^{M_k} \varphi_{k,m}(\lambda) Y_{k ,m}(\theta)\quad \text{in}\ L^2(\mathbb{S}^N),
\end{equation}
where, for all  $k\in{\mathbb N}\setminus\{0\}$, $m\in\{1,2,\dots
,M_{k}\}$, and $\lambda\in (0,\bar{r})$,
$\varphi_{k,m}(\lambda)$ is defined in \eqref{phi}.

Let $k_0\in\mathbb{N}$, $k_0\geq 1$, be as in 
 Lemma \ref{Lemmablow-up}, so that
\begin{equation}\label{gammasigmai+}
 \gamma=\lim_{r\rightarrow 0^+}\mathcal{N}(r)=\frac{k_0}2.
\end{equation}
From \eqref{Hlambda} and the Parseval identity we deduce that 
\begin{equation}\label{parseval}
\mathcal{H}(\lambda)=(1+O(\lambda))\int_{\mathbb{S}^N}v^2(\lambda\theta)\,
dS=(1+O(\lambda))\sum_{k=1}^\infty\sum_{m=1}^{M_k}\varphi_{k,m}^2(\lambda), 
\end{equation}
for all $0<\lambda\leq  \bar{r}$.  Let us assume by contradiction that $\lim_{\lambda\rightarrow 0^+}\lambda^{-2\gamma}\mathcal{H}(\lambda)=0$. Then, \eqref{gammasigmai+} and \eqref{parseval} imply that 
\begin{equation}\label{limphii}
\lim_{\lambda\rightarrow 0^+} \lambda^{-k_0/2}\varphi_{k_0,m}(\lambda)=0\quad \text{for any $m\in\{1,2,\dots,M_{k_0}\}$}.
\end{equation}
From \eqref{formulaperphi-b} and  \eqref{limphii} we obtain that
\begin{equation}\label{c1=0}
  \begin{split}
    R^{-\frac
      {k_0}2}\varphi_{k_0,m}(R)&+\frac{2N+k_0-2}{2(N+k_0-1)}\int_0^{R}
    s^{-N-\frac {k_0}2}\Upsilon_{k_0,m}(s)ds\\&\quad+\frac{k_0\,
      R^{-N+1-k_0}}{2(N+k_0 -1)}\int_0^{R} s^{\frac
      {k_0}2-1}\Upsilon_{k_0,m}(s)\,ds=0
  \end{split}
\end{equation}
for all $R\in (0, \bar{r}]$ and $m\in\{1,2,\dots,M_{k_0}\}$. 

Since we are assuming by contradiction that $\lim_{\lambda\rightarrow 0^+}\lambda^{-2\gamma}\mathcal{H}(\lambda)=0$,
there exists a sequence $\{R_n\}_{n\in\mathbb{N}}\subset (0, \bar{r})$
such that $R_{n+1}<R_n$, $\lim_{n\to\infty}R_n=0$ and 
\begin{equation*}
R_n^{-k_0/2}\sqrt{\mathcal{H}(R_n)}=\max_{s\in[0,R_n]}\left(s^{-k_0/2}\sqrt{\mathcal{H}(s)}\right).
\end{equation*}
By Lemma \ref{lemmaliminf} with $\lambda_n=R_n$, there exists
$m_0\in \{1,2,\dots,M_{k_0}\}$ such that, up to a subsequence,
\begin{equation}\label{limdivda0}
\lim _{n\rightarrow\infty}\frac{\varphi_{k_0,m_0}(R_n)}{\sqrt{\mathcal{H}(R_n)}}\neq 0.
\end{equation}
  By \eqref{c1=0}, \eqref{eq:34}, \eqref{limdivda0}, \eqref{H<kr},
\eqref{xiL1} and \eqref{etaL1},
 we have 
\begin{equation}\label{o1}
\begin{split}
  &\bigg| R_n^{-\frac {k_0}2}\varphi_{k_0,m_0}(R_n)
+\frac{k_0\,
      R_n^{-N+1-k_0}}{2(N+k_0 -1)}\int_0^{R_n} s^{\frac
      {k_0}2-1}\Upsilon_{k_0,m_0}(s)\,ds\bigg|\\
&=\bigg|
  \frac{2N+{k_0}-2}{2(N+{k_0}-1)}\int_0^{R_n} s^{-N-\frac
    {k_0}2}\Upsilon_{{k_0},m_0}(s)ds\bigg|\\
&\leq
  \frac{2N+{k_0}-2}{2(N+{k_0}-1)}\int_0^{R_n} s^{-N-\frac
    {k_0}2}|\Upsilon_{{k_0},m_0}(s)|ds\\
  &\leq {\rm const\,} \biggl(
R_n^{-\frac{k_0}2+1}\sqrt{\mathcal{H}(R_n)}+
\int_0^{R_n}
  s^{-\frac{k_0}2}\sqrt{\mathcal{H}(s)}\Big(1+s^{-1}\beta(s,f)\Big)\,ds\biggr)\\
  &\leq {\rm
    const\,}\biggl(R_n^{-\frac{k_0}2}\sqrt{\mathcal{H}(R_n)}R_n+
  R_n^{-\frac{k_0}2}\sqrt{\mathcal{H}(R_n)}
  \int_0^{R_n}\frac{\beta(s,f)}{s}ds\biggr)\\
  &\leq {\rm
    const\,}\biggl( \biggl|\frac{\sqrt{\mathcal{H}(R_n)}}{\varphi_{k_0,m_0}(R_n)}\biggr|\biggl|\frac{\varphi_{k_0,m_0} (R_n)}{R_n^{k_0/2}}\biggr|R_n+\biggl|\frac{\sqrt{\mathcal{H}(R_n)}}{\varphi_{k_0,m_0}(R_n)}\biggr|\biggl|\frac{\varphi_{k_0,m_0}(R_n)}{R_n^{k_0/2}}\biggr|\int_0^{R_n}\frac{\beta(s,f)}{s}ds\biggr)\\
  &=o\biggl(\frac{\varphi_{k_0,m_0}(R_n)}{R_n^{k_0/2}}\biggr)
\end{split}
\end{equation}
as $n\rightarrow +\infty$.    
On the other hand, by \eqref{o1} we also have that
\begin{equation}\label{o2}
 \ \begin{split}
   \frac{k_0\, R_n^{-N+1-k_0}}{2(N+k_0 -1)}&\left|
\int_0^{R_n} t^{\frac {k_0}2-1}\Upsilon_{k_0,m_0}(t)dt\right|\\
&=    \frac{k_0\, R_n^{-N+1-k_0}}{2(N+k_0 -1)}\left|
\int_0^{R_n} t^{N+k_0-1} t^{-N-\frac
  {k_0}2}\Upsilon_{k_0,m_0}(t)dt\right|  \\
&\leq  \frac{k_0}{2(N+k_0 -1)}
\int_0^{R_n} t^{-N-\frac {k_0}2}|\Upsilon_{k_0,m_0}(t)|dt =o\biggl(\frac{\varphi_{k_0,m_0}(R_n)}{R_n^{k_0/2}}\biggr) 
\end{split}
\end{equation}
as $n\rightarrow +\infty$. 
Combining \eqref{o1} with \eqref{o2} we obtain that 
\[
R_n^{-\frac
  {k_0}2}\varphi_{k_0,m_0}(R_n)=o\Bigl(R_n^{-\frac
  {k_0}2}\varphi_{k_0,m_0}(R_n)\Bigr)
\quad\text{as }n\rightarrow +\infty,
\]
which is a contradiction. 
\end{proof}
Combining Lemma \ref{Lemmablow-up}, Lemma \ref{Lemmablowupdellav} and Lemma \ref{LemmalimH>0}, we can now prove the following theorem which is a more precise and complete version of Theorem \ref{teoremagenerale}. 
\begin{thm}\label{teoremabetai}
Let $N\geq 2$ and $u\in H^1(B_{\hat{R}})\setminus \{0\}$ be a non-trivial weak solution to \eqref{eq:CPu}, with $f$ satisfying either assumptions \eqref{xi0}-\eqref{xi} or \eqref{eta0}-\eqref{eta}. Then, letting $\mathcal{N}(r)$ be as in \eqref{frequency}, there exists $k_0\in\mathbb{N}$, $k_0\geq 1$, such that
\begin{equation}\label{Nu,f=sigmai}
\lim_{r\rightarrow 0^+} \mathcal{N}(r)=\frac{k_0}{2}.
\end{equation}
Furthermore, if 
 $M_{k_0}\in\N\setminus\{0\}$ is the
multiplicity of $\mu_{k_0}$ as an eigenvalue of problem \eqref{eq:1} and 
$\{Y_{k_0 ,m}\}_{m=1,2,\dots ,M_{k_0}}$ is a
      $L^{2}(\mathbb{S}^{N})$-orthonormal basis of the eigenspace associated to $\mu_{k_0}$, then
\begin{equation}\label{convulambdabetai}
\lambda^{-k_0/2}u(\lambda x)\rightarrow |x|^{k_0/2}\sum_{m=1}^{M_{k_0}} \beta_mY_{k_0,m}\biggl(\frac{x}{|x|}\biggr) \quad \text{in $H^1(B_1)$} \quad \text{as $\lambda\rightarrow 0^+$},
\end{equation}
where $(\beta_{1},\beta_{2},\dots,\beta_{M_{k_0}})\neq (0,0,\dots,0)$ 
and
\begin{equation}\label{betaiespliciti}
\begin{split}
\beta_m = \int_{\mathbb{S}^N} &R^{-k_0/2} u(\varPhi (R\theta))Y_{k_0,m}(\theta) dS\\
& +\frac{1}{1-N-k_0}\int_0^R\biggl(\frac{1-N-\frac{k_0}2}{s^{N+\frac{k_0}2}}-\frac{k_0 \,s^{\frac{k_0}2-1}}{2R^{N-1+k_0}}\biggr) \Upsilon_{k_0,m}(s)\, ds
\end{split}
\end{equation}
for all $R\in (0,\bar{r})$ for some $\bar{r}>0$, where
$\Upsilon_{k_0,m}$ is defined in \eqref{Zi} and $\Phi$ is the
diffeomorphism introduced in Lemma \ref{Straightening}.
\end{thm}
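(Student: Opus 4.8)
The plan is to assemble Theorem \ref{teoremabetai} from the blow-up analysis of Section \ref{sec:blow-up-argument}, the straightening construction of Section \ref{sec:straightening-domain}, and the two-sided control of $\mathcal H$ near the origin. First, \eqref{Nu,f=sigmai} is immediate: Lemma \ref{Lemlimesiste} gives that $\gamma:=\lim_{r\to0^+}\mathcal N(r)$ exists and is finite, while Lemma \ref{Lemmablow-up}(i) identifies $\gamma=k_0/2$ for some integer $k_0\geq1$. Next, Lemmas \ref{LemmaesistelimH} and \ref{LemmalimH>0} together yield that
\[
\mathcal L:=\lim_{r\to0^+}r^{-2\gamma}\mathcal H(r)
\]
exists, is finite and \emph{strictly positive}. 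Since, with $w^\lambda$ as in \eqref{ulambda}, one has the identity $\lambda^{-k_0/2}u(\lambda x)=\sqrt{\lambda^{-2\gamma}\mathcal H(\lambda)}\,w^\lambda(x)$ and $\sqrt{\lambda^{-2\gamma}\mathcal H(\lambda)}\to\sqrt{\mathcal L}$, the convergence \eqref{convulambdabetai} will follow once we show that $w^\lambda$ converges in $H^1(B_1)$, as $\lambda\to0^+$, to $\mathcal L^{-1/2}|x|^{k_0/2}\sum_m\beta_mY_{k_0,m}(x/|x|)$ with the $\beta_m$ as in \eqref{betaiespliciti}.

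The crux is to show that the blow-up limit is \emph{independent of the chosen sequence}, and this is exactly what the Cauchy-type formula of Lemma \ref{l:varphi-ups} delivers. By \eqref{formulaperphi-b}, for each $m\in\{1,\dots,M_{k_0}\}$ the limit $\lim_{\lambda\to0^+}\lambda^{-k_0/2}\varphi_{k_0,m}(\lambda)$ exists and, after an elementary rearrangement of the coefficients and using $\varphi_{k_0,m}(R)=\int_{\mathbb S^N}u(\Phi(R\theta))Y_{k_0,m}(\theta)\,dS$, it equals precisely the right-hand side $\beta_m$ of \eqref{betaiespliciti}; in particular this number is the same for every $R\in(0,\bar r)$. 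On the other hand, fix any sequence $\lambda_n\to0^+$. By Lemma \ref{Lemmablowupdellav} there is a subsequence along which $w^{\lambda_{n_k}}\to|x|^{k_0/2}\psi(x/|x|)$ strongly in $H^1(B_1)$ and $v(\lambda_{n_k}\cdot)/\sqrt{\int_{\mathbb S^N}v^2(\lambda_{n_k}\theta)\,dS}\to\psi$ in $L^2(\mathbb S^N)$, where $\psi$ is an eigenfunction of \eqref{eq:1} for $\mu_{k_0}$ with $\|\psi\|_{L^2(\mathbb S^N)}=1$; write $\psi=\sum_m c_mY_{k_0,m}$. Testing this $L^2$-convergence against $Y_{k_0,m}$ and using \eqref{Hlambda} together with $\mathcal H(\lambda_{n_k})\sim\mathcal L\,\lambda_{n_k}^{k_0}$ (hence $\int_{\mathbb S^N}v^2(\lambda_{n_k}\theta)\,dS\sim\mathcal L\,\lambda_{n_k}^{k_0}$), we obtain $\lambda_{n_k}^{-k_0/2}\varphi_{k_0,m}(\lambda_{n_k})\to\sqrt{\mathcal L}\,c_m$. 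Comparing with the sequence-free limit $\beta_m$ forces $c_m=\beta_m/\sqrt{\mathcal L}$ for every $m$.

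Since the coefficients $c_m$, and hence $\psi$, are thereby determined independently of $\lambda_n$ and of the extracted subsequence, the Urysohn subsequence principle (used repeatedly in Section \ref{sec:blow-up-argument}) upgrades the convergence in Lemma \ref{Lemmablow-up}(ii) to the full limit: $w^\lambda\to\mathcal L^{-1/2}|x|^{k_0/2}\sum_m\beta_mY_{k_0,m}(x/|x|)$ strongly in $H^1(B_1)$ as $\lambda\to0^+$. Multiplying by $\sqrt{\lambda^{-2\gamma}\mathcal H(\lambda)}\to\sqrt{\mathcal L}$ yields \eqref{convulambdabetai} with $\beta_m$ as in \eqref{betaiespliciti}. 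Finally, the normalization $\|\psi\|_{L^2(\mathbb S^N)}=1$ translates into $\sum_m\beta_m^2=\mathcal L>0$, so $(\beta_1,\dots,\beta_{M_{k_0}})\neq(0,\dots,0)$. The main obstacle in this scheme is precisely the step that pins down the angular coefficients: without the explicit representation \eqref{formulaperphi-b} one only knows that every blow-up limit is \emph{some} $\mu_{k_0}$-eigenfunction, with an a priori sequence-dependent profile; Lemma \ref{l:varphi-ups}, which itself rests on the integrability properties \eqref{xiL1}/\eqref{etaL1}, is what makes the limit canonical and produces the Cauchy-type formula for the $\beta_m$'s.
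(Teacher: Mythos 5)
Your proposal is correct and follows essentially the same route as the paper: extract a convergent subsequence via Lemmas \ref{Lemmablow-up} and \ref{Lemmablowupdellav}, use Lemmas \ref{LemmaesistelimH} and \ref{LemmalimH>0} to pass from the $\mathcal H$-normalization to the $\lambda^{-k_0/2}$-normalization, pin down the Fourier coefficients by the sequence-independent limit in \eqref{formulaperphi-b} (which, after the algebraic identification $\tfrac{1-N-k_0/2}{1-N-k_0}=\tfrac{2N+k_0-2}{2(N+k_0-1)}$, is exactly \eqref{betaiespliciti}), and conclude by the Urysohn subsequence principle. The only cosmetic difference is that you factor the limit as $\sqrt{\mathcal L}$ times a normalized eigenfunction, whereas the paper absorbs $\sqrt{\mathcal L}$ directly into the $\beta_m$'s; the argument is the same.
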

\begin{proof}
 Identity \eqref{Nu,f=sigmai} follows immediately from Lemma
 \ref{Lemmablow-up}. 

 In order to prove \eqref{convulambdabetai}, let
 $\{\lambda_n\}_{n\in\mathbb{N}}\subset (0,\infty)$ be such
 that $\lambda_n\rightarrow 0^+$ as $n\rightarrow +\infty$. By Lemmas
 \ref{Lemmablow-up}, \ref{LemmaesistelimH}, \ref{Lemmablowupdellav},
 \ref{LemmalimH>0} and \eqref{Hlambda}, there exist a subsequence
 $\{\lambda_{n_j}\}_j$ and constants
 $\beta_{1},\beta_{2},\dots,\beta_{M_{k_0}}\in\R$ such
 that
 $(\beta_{1},\beta_{2},\dots,\beta_{M_{k_0}})\neq
 (0,0,\dots,0)$,
\begin{equation}\label{ulambdabetai}
\lambda_{n_j}^{-\frac{k_0}2}u(\lambda_{n_j}x)\rightarrow |x|^{\frac{k_0}2}\sum_{m=1}^{M_{k_0}}\beta_mY_{k_0,m}\biggl(\frac{x}{|x|}\biggr) \quad \text{in $H^1(B_1)$} \quad\text{as $j\rightarrow +\infty$}
\end{equation}
and
\begin{equation}\label{5}
\lambda_{n_j}^{-\frac{k_0}2}v(\lambda_{n_j}\cdot)\rightarrow 
\sum_{m=1}^{M_{k_0}}\beta_mY_{k_0,m} \quad \text{in $L^2(\mathbb{S}^N)$} \quad\text{as $j\rightarrow +\infty$}.
\end{equation}
We will now prove that the $\beta_m$'s depend neither on the sequence
$\{\lambda_n\}_{n\in\mathbb{N}}$ nor on its subsequence
$\{\lambda_{n_j}\}_{j\in\mathbb{N}}$. Let us fix $R\in (0,\bar{r})$,
with $\bar{r}$ as in Lemma \ref{Straightening}, and define
$\varphi_{k_0,m}$ as in \eqref{phi}. From \eqref{5} it follows that,
for any $m=1,2,\dots, M_{k_0}$,
\begin{equation}\label{lambdanj}
\lim_{j\rightarrow +\infty}\lambda_{n_j}^{-\frac{k_0}2}\varphi_{k_0,m}(\lambda_{n_j})=\lim_{j\rightarrow +\infty}\int_{\mathbb{S}^N}\frac{v(\lambda_{n_j}\theta)}{\lambda_{n_j}^{k_0/2}}
Y_{k_0,m}(\theta) dS= \sum_{i=1}^{M_{k_0}}\beta_i\int_{\mathbb{S}^N}Y_{k_0,i}\, Y_{k_0,m}dS=\beta_m.
\end{equation}
On the other hand, \eqref{formulaperphi-b} implies that, for any $m=1,2,\dots, M_{k_0}$, 
\begin{multline*}
\lim_{\lambda \rightarrow 0^+}\lambda^{-\frac{k_0}2}\varphi_{k_0,m}(\lambda)=R^{-\frac
  {k_0}2}\varphi_{k_0,m}(R)+\frac{2N+k_0-2}{2(N+k_0-1)}\int_0^{R} s^{-N-\frac
  {k_0}2}\Upsilon_{k_0,m}(s)ds\\+\frac{k_0\, R^{-N+1-k_0}}{2(N+k_0 -1)}\int_0^{R} s^{\frac {k_0}2-1}\Upsilon_{k_0,m}(s)\,ds,
\end{multline*}
with $\Upsilon_{k_0,m}$ as in \eqref{Zi}, and therefore from \eqref{lambdanj} we deduce that 
\begin{multline*}
\beta_m= 
R^{-\frac
  {k_0}2}\varphi_{k_0,m}(R)+\frac{2N+k_0-2}{2(N+k_0-1)}\int_0^{R} s^{-N-\frac
  {k_0}2}\Upsilon_{k_0,m}(s)ds\\+\frac{k_0\, R^{-N+1-k_0}}{2(N+k_0 -1)}\int_0^{R} s^{\frac {k_0}2-1}\Upsilon_{k_0,m}(s)\,ds
\end{multline*}
for any $m=1,2,\dots,M_{k_0}$. In particular the $\beta_m$'s depend neither on the sequence $\{\lambda_n\}_{n\in\mathbb{N}}$ nor on its subsequence $\{\lambda_{n_k}\}_{k\in\mathbb{N}}$, thus implying that the convergence in \eqref{ulambdabetai} actually holds as $\lambda\rightarrow 0^+$, and proving the theorem.
\end{proof}

\appendix 
\section{Eigenvalues of problem \eqref{eq:1}}\label{sec:app_A}

In this appendix, we derive the explicit formula \eqref{eq:2} for the
eigenvalues of problem \eqref{eq:1}. 

Let us start by observing that, if $\mu$ is an eigenvalue of
\eqref{eq:1} with an associated eigenfunction $\psi$, then,
letting $\sigma=-\frac{N-1}2+\sqrt{\big(\frac{N-1}2\big)^2+\mu}$,
 the function  
$W(\rho\theta)=\rho^\sigma\psi(\theta)$ belongs to
$H^1_{\tilde{\Gamma}}(B_1)$ and is harmonic in $B_1\setminus
\tilde{\Gamma}$. From \cite{Costabel} it follows that there exists
$k\in\mathbb{N}\setminus\{0\}$ such that $\sigma=\frac k2$, so that
$\mu=\frac k4(k+2N-2)$. Moreover, from \cite{Costabel} we also deduce
that $W\in L^\infty(B_1)$, thus implying that $\psi\in
L^\infty(\sfera^N)$. 

Viceversa, let us prove that all
numbers of the form $\mu=\frac k4(k+2N-2)$ with
$k\in\mathbb{N}\setminus\{0\}$  
are eigenvalues of \eqref{eq:1}. Let us fix $k\in\mathbb{N}\setminus\{0\}$  and
consider the function $W$ defined, in cylindrical coordinates, as  
\[
W(x',r\cos t,r\sin t)=r^{k/2}\sin\bigg(\frac k2 \,t\bigg),\quad x'\in
\R^{N-1},\ r\geq 0,\  t\in [0,2\pi].
\]
We have that $W$ belongs to $H^1_{\tilde{\Gamma}}(B_1)$ and is harmonic in $B_1\setminus
\tilde{\Gamma}$; furthermore $W$ is homogeneous of degree $k/2$, so
that, letting $\psi:=W\big|_{\sfera^N}$, we have that $\psi\in
H^1_0(\sfera^N\setminus\Sigma)$, $\psi\not\equiv0$, and 
\begin{equation}\label{eq:3}
W(\rho\theta)=\rho^{k/2}\psi(\theta),\quad \rho\geq0,\ \theta\in
\sfera^N.
\end{equation}
Plugging \eqref{eq:3} into the equation $\Delta W=0$ in $B_1\setminus
\tilde{\Gamma}$, we obtain that 
\[
\rho^{\frac k2-2}\Big(
\tfrac k2\big(\tfrac k2-1+N\big)\psi(\theta)
+\Delta_{\sfera^N}\psi\Big)=0,\quad \rho>0,\ \theta\in \sfera^N\setminus\Sigma,
\]
so that $\frac k4(k+2N-2)$ is an eigenvalue of \eqref{eq:1}. 

We then conclude that the set of all eigenvalues of problem
\eqref{eq:1} is 
$\left\{
\frac{k(k+2N-2)}4:\, k\in \mathbb{N}\setminus\{0\}\right\}$
and all eigenfunctions belong to $L^\infty(\sfera^N)$. 

We observe in particular that the first eigenvalue $\mu_1=\frac
{2N-1}4$ is simple and an  associated eigenfunction is given by the
function 
\[
\Phi(\theta',\theta_N,\theta_{N+1})=\sqrt{
\sqrt{\theta_N^2+\theta_{N+1}^2}-\theta_N},\quad
(\theta',\theta_N,\theta_{N+1})\in \mathbb{S}^N.
\]

\end{document}